\theoremstyle{plain}
\newtheorem{thm}{Theorem}
\newtheorem{lem}[thm]{Lemma}
\newtheorem{cor}[thm]{Corollary}
\newtheorem{prop}[thm]{Proposition}
\theoremstyle{definition}
\newtheorem{defn}[thm]{Definition}
\newtheorem{question}[thm]{Question}
\newtheorem{rmk}[thm]{Remark}
\newtheorem{ex}[thm]{Example}
\newtheorem{claim}[thm]{Claim}
\numberwithin{thm}{section} \numberwithin{equation}{section}
\newcommand{\eq}[2]{\begin{equation}\label{#1}#2 \end{equation}}
\newcommand{\Hom}{{\rm Mor}}
\newcommand{\Spec}{{\rm Spec \,}}
\newcommand{\sO}{{\mathcal O}}
\newcommand{\C}{{\mathbf  C}}  
\newcommand{\D}{{\mathbf D}}    %redefined for derived cats
\newcommand{\E}{{\mathbf E}}
\newcommand{\y}{{\bf y}}
\newcommand{\ay}{{\bf ay}}
\newcommand{\Z}{{\mathbb Z}}
\newcommand{\et}{{\rm \acute{e}t}}
\newcommand{\op}{{\rm op}}
\newcommand{\Sh}{\operatorname{Sh}}
\newcommand{\card}{\operatorname{card}}
\newcommand{\Mor}{{\rm Mor}}
\newcommand{\Pro}[1]{{{\rm pro}^{#1}\operatorname{-} }}
\newcommand{\colim}{\operatorname*{colim}}
\newcommand{\Mod}{\operatorname{Mod}}
\newcommand{\te}{{\rm t}}
\begin{document}

\title{Transfinite limits in topos theory}
\author{Moritz Kerz}
\address{Fakult\"at f\"ur Mathematik, Universit\"at Regensburg, 93040 Regensburg, Germany}
\email{moritz.kerz@mathematik.uni-regensburg.de}

\begin{abstract} 
For a coherent site we construct  a canonically associated enlarged coherent site, such
that cohomology of bounded below complexes is preserved by the enlargement. In the
topos associated to the enlarged site transfinite
compositions of epimorphisms are epimorphisms and a weak analog of the concept of the
algebraic closure exists.  The construction is a variant of the work
of Bhatt and Scholze on the
pro-\'etale topology.
\end{abstract}

\subjclass{18F10, 18F20}
\maketitle

\section{Introduction}

\noindent
In \cite{BS} B.\ Bhatt and P.\ Scholze construct a so called pro-\'etale enlargement of the usual \'etale topos
of a scheme, see also \cite[Tag 0965]{StP} and \cite{Sch}. A characteristic feature of the pro-\'etale topos is that certain limits have
better exactness properties than in the usual \'etale topos, while the cohomology of
classical \'etale sheaves does not change. This turns out to be useful when working with
unbounded derived categories. 

In this paper we propose a variant of the theory of Scholze and Bhatt--Scholze which works for a coherent site and we give two applications  to the
calculation of hypercohomology and to the existence of a left adjoint of the pullback of sheaves
along a closed immersion of schemes in the Nisnevich and \'etale topology.  

\medskip

Consider a coherent topos $\E$, as defined in \cite[Exp.\ VI]{SGA4}, for example the
\'etale topos of a quasi-compact and quasi-separated scheme. 
The key property we are interested in is whether in the topos a transfinite composition of
epimorphisms is an epimorphisms.  More precisely we say that $\E$ is $\alpha$-transfinite
if the following property holds:
 
For an ordinal $\lambda\le \alpha$ and for a functor $F:\lambda^{\rm op}\to \E$ with the
property that
\begin{itemize}
\item
 for any ordinal $1\le i+1<\lambda$ 
the morphism $F_{i+1}\to F_i$ is an epimorphism and
\item  for any limit ordinal
$\mu<\lambda$ the natural morphism
\[
F_\mu \xrightarrow{\sim} \lim_{i<\mu} F_i
\] 
is an isomorphism 
\end{itemize}
we ask that 
\[
 \lim_{i<\lambda} F_i\xrightarrow{\sim} F_0
\]
is an epimorphism.
Here the ordinal $\lambda$ as an ordered set is identified with the associated category.

The property $\aleph_0$-transfinite is studied in \cite{BS} under the name replete. The
topos of sets is $\alpha$-transfinite for all cardinals $\alpha$, while the
standard topoi that show up in algebraic geometry, for example the small \'etale topos, are usually not
$\aleph_0$-transfinite. So it is natural to try to make them transfinite in a minimal way.

In our first main theorem,  Theorem~\ref{thm1},  we construct for  any
coherent site $\C$ which is admissible in the sense of Definition~\ref{def.admiss} and for any infinite cardinal $\alpha$ a new coherent site $\langle \alpha
\rangle \C $ and a continuous functor preserving finite limits $\pi_\alpha^\C:\C\to \langle \alpha
\rangle \C$ such  that the topos $\langle\alpha \rangle \E  =\Sh (\langle \alpha
\rangle \C)$ is
$\alpha$-transfinite and the associated morphism of topoi
\[
( (\pi^\E_\alpha)_* , (\pi^\E_\alpha)^*):  \langle\alpha \rangle \E \to \E=\Sh(\C),
\]
has the property that $(\pi^\E_\alpha)^*$ is fully faithful and preserves cohomology of bounded below
complexes.  

In our second main theorem, Theorem~\ref{thm2}, we show that for large $\alpha$ the topos
$\langle\alpha \rangle \E$ 
is generated by weakly contractible objects. Here following \cite{BS} we call an object
$C$ of $\E$
weakly contractible if any epimorphism $D\to C$
in $\E$ splits. In some sense this means that the topoi $\langle\alpha \rangle \E$
`stabilize' for $\alpha$ large.
Note that in category theory it is more common to use the word projective
instead of weakly contractible. 

\smallskip

The main difference between our construction and the construction in \cite{BS} for the
 \'etale topos is that we work with a topology, which we call transfinite topology,
which sits between the usual \'etale topology and the pro-\'etale topology and in some sense
captures properties of both. The precise
relation is explained in Section~\ref{sec.zar} for the Zariski topos.

\medskip

Concretely our construction works as follows.
 We consider the pro-category $\Pro{\alpha} \C$ of
pro-objects whose index category is bounded by $\alpha$. We define in
Section~\ref{sec.pros} the transfinite topology on $\Pro{\alpha} \C$  as the weakest topology such that the
canonical functor
\[
\C\to \Pro{\alpha} \C
\]
is continuous and such that a transfinite composition of covering morphisms in $\Pro{\alpha} \C$
is a covering morphism. Then the site $\langle \alpha \rangle \C$ is just  $\Pro{\alpha}
\C$ with the transfinite topology.

\medskip

In order to motivate the construction of this paper we explain 
in Section~\ref{sec.hyper} why classical Cartan-Eilenberg hypercohomology of unbounded
complexes can be recovered as the derived cohomology on the enlarged topos $\langle\alpha
\rangle \E$.

Another motivation stems from the fact that, roughly speaking, in the world of transfinite enlarged topoi the
pullback functor of sheaves $i^*$ for a morphism of schemes $i:Y\to X$ tends to have a left
adjoint in the setting of Grothendieck's six functor formalism. This was observed for the
pro-\'etale topology in \cite[Rmk.\ 6.1.6]{BS} and the argument in our setting is very similar. Concretely, we show
that for $X$ quasi-compact and separated and for a closed immersion $i:Y\to X$ the
pullback functor
\[
i^*:D_\Lambda(\langle\alpha\rangle X_\te) \to D_\Lambda(\langle\alpha\rangle Y_\te)
\]
on derived categories of sheaves of $\Lambda$-modules has a left adjoint if $\alpha$ is large. Here $\te$ stands
for the small Nisnevich or \'etale topology on the category of affine, \'etale schemes over
$X$ or $Y$.

\subsection*{Notation} A category is called small if up to isomorphism its objects form a
set and not only a class. When we say topos we mean a Grothendieck topos. For topos theory
we follow the notation of \cite{MLM}. 

A coherent site is a small category having finite limits together with a topology generated
by finite coverings. For a subcanonical site $\C$ we write $\y:\C \to \Sh(\C)$ for the Yoneda embedding.

 A partially ordered set $(S,\le)$ is considered as a category with a
unique morphism $s_1\to s_2$ if $s_1\le s_2$ and no morphisms form $s_1$ to $s_2$
otherwise. 

By a $2$-category we mean a $(2,1)$-category, i.e.\  all $2$-morphisms are
invertible, $2$-functors between $2$-categories are allowed to be lax. So the
formalism of $\infty$-categories is applicable and we freely use notions from \cite{Lu}.

We use Zermelo--Fraenkel set theory including the axiom of choice. We do not use
the concept of universes as applied in \cite{SGA4}. 

\subsection*{Acknowledgment}

The results of this paper originated from discussions around a seminar on the work of B.\ Bhatt and
P.\ Scholze on the pro-\'etale topology \cite{BS} at the University of
Regensburg. I would like to thank all participants of this seminar. Clearly, this
paper is very much influenced by the work of Bhatt and Scholze.
I would like to thank B.\ Bhatt,  F.\ Strunk and M.\ Spitzweck for helpful discussions.
I would like to thank the referee for helpful remarks from the point of view of a category theorist.

The author was supported by the Emmy Noether group {\em Arithmetik \"uber endlich erzeugten
  K\"orpern} and the DFG grant SFB 1085. The paper was completed during a stay at the Institute for
Advanced Study.

\section{Preliminaries on towers and limits}\label{sec.tow}

\noindent
In this section we summarize some properties of pro-categories and diagrams indexed by
ordinals, which we call towers.

\subsection*{Pro-categories}
As a general reference for pro-categories see for example \cite{Isa}. In this paper we
need to bound the cardinalities of the index categories; however the basic arguments  essentially
stay the same as in the existing literature, so we do not give any proofs.

Let $\bf C$ and $ I$ be a categories and let $\alpha$ be an infinite cardinal. We call $ I$ an
$\alpha$-category if the system of all morphisms ${\rm Mor}({I})$ of $I$ forms a set of cardinality
at most $\alpha$. For a $\alpha$-category $ I$ and a functor $F:{ I} \to {\bf C}$ we
call $\lim_{i\in I} F(i)$ an $\alpha$-limit if it exists. There is a corresponding notion
for a functor to preserve $\alpha$-limits.

Note that the formation of $\alpha$-limits can be `decomposed'
into equalizers and products indexed by sets $I$ with $\card
I\le \alpha$ \cite[Sec.\ V.2]{ML}.

On can associate with $\bf C$ its pro-category $\Pro{\alpha}  \bf C$ indexed by
 cofiltered $\alpha$-categories.  
The objects of $\Pro{\alpha}  \bf C$ are the
functors 
\eq{}{
F:{I} \to \bf C
}
where $ I$ is cofiltered $\alpha$-category. For $F:{I} \to\bf C$ and $G:{ J}\to\bf C$ objects
of $\Pro{\alpha} \bf C$ the set of morphisms from $F$ to $G$ is given by
\eq{}{
\Mor_{\Pro{\alpha} \bf C}(F,G) = \lim_{j \in  J} \colim_{i\in I} \Mor_{\bf C} (F(i),G(j)).
}
Each object of $\Pro{\alpha} \bf C$ has a level representation $F: I \to\bf C$ with
$ I$ a cofinite directed set with $\card I\le \alpha$. This follows from the proof of
\cite[Prop.\ I.8.1.6]{SGA4}. All cofiltered $\alpha$-limits exist in $\Pro{\alpha} \bf C$, see
\cite[Thm.\ 4.1]{Isa}. If $\bf C$ has finite limits $\Pro{\alpha} \bf C$ has all
$\alpha$-limits.
%, because equalizers exist by \cite[App.\ 4.2]{}.

Pro-categories can be characterized by the following universal property. Let ${\bf
  Cat}^{fl}$ be the $2$-category whose objects are small categories having finite limits,
whose $1$-morphisms are functors preserving finite limits and whose $2$-morphisms are all
natural equivalences. Let ${\bf Cat}^{l}$ be the $2$-category whose objects are all small
categories having $\alpha$-limits, whose $1$-morphisms are functors preserving
$\alpha$-limits and whose $2$-morphisms are natural equivalences.

\begin{prop}\label{prop.unipro}
The canonical $2$-functor ${\bf Cat}^{f} \to {\bf  Cat}^{fl}$ is right adjoint to the
$2$-functor mapping ${\bf C}\mapsto \Pro{\alpha}\bf C$. 
\end{prop}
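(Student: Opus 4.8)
The statement is the universal property of the pro-category adapted to the cardinality bound $\alpha$ and phrased $2$-categorically: writing $U\colon{\bf Cat}^{l}\to{\bf Cat}^{fl}$ for the canonical (forgetful) $2$-functor, the claim is that for each $\mathbf C$ in ${\bf Cat}^{fl}$ and each $\mathbf D$ in ${\bf Cat}^{l}$ precomposition with the canonical functor induces an equivalence of categories
\[
\Hom_{{\bf Cat}^{l}}(\Pro{\alpha}\mathbf C,\mathbf D)\xrightarrow{\ \sim\ }\Hom_{{\bf Cat}^{fl}}(\mathbf C,U\mathbf D),
\]
$2$-naturally in both variables. The plan is to exhibit the unit and the extension explicitly and then verify the equivalence. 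First I would record the unit $\iota\colon\mathbf C\to\Pro{\alpha}\mathbf C$ sending an object to the constant pro-system indexed by the one-point category; since $\iota$ carries a finite limit in $\mathbf C$ to the pro-system of that limit, and this computes the finite limit in $\Pro{\alpha}\mathbf C$, the functor $\iota$ preserves finite limits and so is a $1$-morphism of ${\bf Cat}^{fl}$.

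Next, for a finite-limit-preserving $f\colon\mathbf C\to\mathbf D$ with $\mathbf D$ having $\alpha$-limits, I would define the extension $\tilde f\colon\Pro{\alpha}\mathbf C\to\mathbf D$ on an object $F\colon I\to\mathbf C$, with $I$ a cofiltered $\alpha$-category, by $\tilde f(F)=\lim_{i\in I} f(F(i))$; the limit exists in $\mathbf D$ because $I$ is an $\alpha$-category. On morphisms, a map $F\to G$ is an element of $\lim_{j}\colim_{i}\Mor_{\mathbf C}(F(i),G(j))$, and each representing arrow $F(i)\to G(j)$ yields $f(F(i))\to f(G(j))$; composing with the projection off $\tilde f(F)$ and passing to the limit over $j$ produces $\tilde f(F)\to\tilde f(G)$, the $\colim$ over $i$ ensuring independence of the representative. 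By construction one has a canonical isomorphism $\tilde f\circ\iota\cong f$, which gives essential surjectivity of the restriction functor.

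The main work is to show $\tilde f$ preserves $\alpha$-limits, i.e.\ is a $1$-morphism of ${\bf Cat}^{l}$. Here I would invoke the decomposition recorded in the preliminaries: every $\alpha$-limit is built from equalizers and products over index sets of cardinality $\le\alpha$, and such a product is itself a cofiltered $\alpha$-limit of its finite subproducts; hence it suffices to check that $\tilde f$ preserves finite limits and cofiltered $\alpha$-limits separately. Preservation of cofiltered $\alpha$-limits is essentially built into the definition, since each pro-object is the canonical cofiltered limit $F\cong\lim_{i\in I}\iota(F(i))$ in $\Pro{\alpha}\mathbf C$ and $\tilde f$ was defined to send it to $\lim_i f(F(i))$; a general cofiltered $\alpha$-limit of pro-objects is reduced to this by reindexing the resulting double cofiltered limit over a single cofiltered $\alpha$-category. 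For finite limits I would use that finitely many pro-objects admit a common cofiltered $\alpha$-index $I$, so the finite limit is computed level-wise as $i\mapsto\lim_{j}F^{(j)}(i)$; applying $\tilde f$, then using that $f$ preserves finite limits and that limits commute with limits, gives $\tilde f(\lim_j F^{(j)})\cong\lim_j\tilde f(F^{(j)})$.

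Finally, to upgrade essential surjectivity to an equivalence I would again use that every object of $\Pro{\alpha}\mathbf C$ is a cofiltered $\alpha$-limit of objects of $\iota(\mathbf C)$: any two $\alpha$-limit-preserving functors out of $\Pro{\alpha}\mathbf C$ agreeing on $\iota(\mathbf C)$, together with a natural isomorphism of their restrictions, extend uniquely to a natural isomorphism between them, since both commute with these defining limits; this yields full faithfulness of restriction, and the same principle makes the whole construction $2$-natural in $\mathbf C$ and $\mathbf D$. I expect the main obstacle to be the bookkeeping in this reduction to finite and cofiltered limits: one must reindex double cofiltered limits and choose common indices for finite diagrams while keeping every index category cofiltered and of cardinality $\le\alpha$, and must check these choices are coherent enough to produce honest natural isomorphisms rather than merely objectwise ones. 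This is precisely where the cardinality bound $\alpha$ has to be propagated with care and where the $2$-categorical coherence (the triangle identities up to invertible modification) is verified.
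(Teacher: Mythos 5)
Your proposal is correct and follows essentially the same route as the paper: the paper proves the proposition only by exhibiting exactly your extension formula, $\phi(F)\bigl((C_i)_{i\in I}\bigr)=\lim_{i\in I}F(C_i)$, as the claimed equivalence of groupoids, and defers all verifications to the standard pro-category literature. The details you fill in (reducing $\alpha$-limits to finite limits plus cofiltered $\alpha$-limits, and extending natural isomorphisms along the canonical presentation of a pro-object as the cofiltered limit of its levels) are precisely the standard arguments the paper implicitly invokes.
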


For the notion of adjointness in higher category theory see for example \cite[Sec.\
5.2]{Lu}. Proposition~\ref{prop.unipro} is equivalent to the statement that there is a
natural equivalence of groupoids
\eq{}{
\Hom_{{\bf  Cat}^{fl}} ({\bf C}, {\bf D} ) \xrightarrow{\sim}_{\phi}\Hom_{{\bf  Cat}^{f}} ( \Pro{\alpha} {\bf C},{\bf D} )
}
for ${\bf C} \in {\bf  Cat}^{fl}$ and  ${\bf D} \in {\bf  Cat}^{l}$. This
equivalence is given as follows. For ${ F}\in \Hom_{{\bf  Cat}^{fl}} ({\bf C}, {\bf D} )$
and $(C_i)_{i\in  I}\in \Pro{\alpha} {\bf C}$ set \[\phi({ F}) ((C_i)_{i\in  I}) =
\lim_{i\in I} F(C_i).\]

\subsection*{Towers}

For an ordinal $\lambda$ consider a functor $ F:\lambda^\op
\to \bf C$. We usually denote such a functor by $ F=(F_i)_{i<\lambda}$.   For an
ordinal $\mu\le  \lambda$ write 
\eq{eq.limex}{
F_{<\mu} =
\lim_{i<\mu} F_i
}
if the limit exists.    We call $F$ a $\lambda$-tower (or just tower) if for any limit ordinal $\mu<
\lambda$ the limit \eqref{eq.limex} exists and if the natural morphism
\[F_\mu\to F_{<\mu}\]
is an isomorphism. 

 We say that the tower $F:\lambda^\op
\to \bf C$ has a certain property $\mathcal P$ if all the morphisms $F_{i+1}\to F_i$ have
the property $\mathcal P$ for $1\le i+1< \lambda$.
We call $F_{<\lambda}\to F_0$ the transfinite composition of the tower $(F_i)_{i<\lambda}$
if the limit exists.
 By a morphism of towers we mean a natural
transformation of functors. 

Let $\alpha$ be a cardinal and $\bf C$, $\bf D$ categories having all $\alpha$-limits. We
say that a functor $u:\bf D \to C$ preserves $\alpha$-transfinite limits if $u$ maps
$\lambda$-towers to $\lambda$-towers for $\lambda \le \alpha$. 

Let $ F=(F_i)_{i<\lambda}$ be a tower and $\pi_0:E_0\to F_{\mu}$ a
morphism for some  ordinal $\mu<\lambda$. If fiber products exist in $\bf C$ we define the pullback tower $ E=\pi_0^*
 F$ by \[ 
E_i= \begin{cases} E_0 & \text{if } i\le \mu \\ E_0 \times_{F_{\mu}} F_i  & \text{if } i> \mu \end{cases}
\]
There is a natural morphism of towers $\pi:E\to F$.

Let $F=(F_i)_{i<\lambda}$ and $G=(G_{j})_{j<\mu}$ be two towers. If $F_{<\lambda}\cong G_0$
 we consider the concatenation of towers $((F\circ G)_k)_{k< \lambda + \mu}$ with
 \[ 
(F\circ G)_k= \begin{cases} F_k & \text{if } k<\lambda \\ G_{j}  & \text{if } k=\lambda+j \end{cases}
\]
The concatention of two towers can be generalized to the concatenation of a 
family of towers indexed by an ordinal. We leave the details to the reader.

If we are given a symmetric monoidal structure $\odot: {\bf C} \times {\bf C}\to \bf C$
which preserves limits of towers and we are given two towers  $F=(F_i)_{i<\lambda}$ and
$G=(G_{j})_{j<\mu}$
we consider the tower $((F\odot G)_{k})_{k<\max(\lambda,\mu)}$. Without loss of
generality let $\lambda\le \mu$. Then,  assuming
$F_{<\lambda}$ exists, $F\odot G$ is defined by
 \[ 
(F\odot G)_k= \begin{cases} F_k \odot G_k & \text{if } k<\min(\lambda,\mu) \\
  F_{<\lambda}\odot G_k  & \text{if } k\ge \lambda \end{cases}
\]
For example we can use the categorical product for $\odot$ if it exists.

\section{Transfinite sites and topoi}

\noindent
In this section we study sites and topoi in which certain limits indexed by ordinal numbers behave well.
More precisely we call a topos transfinite if  transfinite compositions  of
epimorphisms are epimorphisms, in the sense of
towers as in Section~\ref{sec.tow}. The $\aleph_0$-transfinite topoi are the same as the
replete topoi of Bhatt and Scholze \cite[Sec.\ 3]{BS}.

Let $\alpha$ be an infinite  cardinal and let $\bf E$ be a topos. 

\begin{defn}\label{def.trtop}
We say that $\bf E$
is $\alpha$-transfinite if for any ordinal $\lambda\le \alpha$ and for any $\lambda$-tower
$(E_i)_{i<\lambda}$ of epimorphisms, i.e.\ with $E_{i+1}\to E_i$
an epimorphism for all $1\le i+1<\lambda$, the transfinite composition
\[
E_{<\lambda}=\lim_{i<\lambda} E_i\to E_0
\]
is an epimorphism. We say that $\bf E $ is transfinite if it is $\alpha$-transfinite for
all cardinals $\alpha$.
\end{defn}

\begin{ex}\label{ex.set.trans}
The topos  of sets $\bf Set$ is transfinite. For a group $G$ the topos $ {\bf B}G$ of $G$-sets
is transfinite.
\end{ex}

More generally, any topos with enough weakly contractible objects in the sense of
\cite[Def.\ 3.2.1]{BS}
is transfinite. 

\begin{defn}\label{def.weakcon}
We call an object $C$ of a topos $\bf
E$ weakly contractible, if any epimorphism $D\to C$ splits in $\bf E$, i.e.\ if there is a
morphism $C\to D$ such that the composition $C\to D\to C$ is the identity. We say that a topos
$\bf E$ has enough weakly contractible objects if for any object $C$ of $\bf E$ there is an
epimorphism $D\to C$ with $D$ weakly contractible.
\end{defn}

Note that a small coproduct of weakly contractible objects in a topos is weakly
contractible.

As any epimorphism splits in $\bf Set$, the topos of sets has enough weakly
contractible objects. The referee points out that the following proposition is a classical
fact about categories with enough projective objects: in fact in such a category a
morphism $f:C\to E$ is an epimorphism if and only if the induced map $\Hom (P,C)\to
\Hom(P,E)$ is surjective for any projective object $P$. Therefore
Proposition~\ref{prop.weakco} is reduced to Example~\ref{ex.set.trans}. As the latter
might not be well-known to a geometer and as we need a
variant of the proof in Corollary~\ref{prop.weaktrans}, we give a detailed argument below.

\begin{prop}\label{prop.weakco}
Let $\bf E$ be a topos with enough weakly contractible objects. Then $\bf E$ is transfinite.
\end{prop}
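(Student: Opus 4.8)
The plan is to reduce the statement to the already established fact that the topos $\bf Set$ is transfinite (Example~\ref{ex.set.trans}), by probing the tower with the representable functors $\Hom(P,-)$ attached to weakly contractible objects $P$. Fix a cardinal $\alpha$, an ordinal $\lambda\le\alpha$ and a $\lambda$-tower $(E_i)_{i<\lambda}$ of epimorphisms in $\bf E$, and write $E_{<\lambda}=\lim_{i<\lambda}E_i$. I want to show that the transfinite composition $\phi\colon E_{<\lambda}\to E_0$ is an epimorphism. Since $\bf E$ has enough weakly contractible objects, I first choose an epimorphism $p\colon P\to E_0$ with $P$ weakly contractible; the goal then becomes to factor $p$ through $\phi$, because once $p=\phi\circ g$ with $p$ an epimorphism, left-cancellation of epimorphisms forces $\phi$ to be an epimorphism as well.

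The technical heart is the observation that a weakly contractible object $P$ is projective, i.e.\ that $\Hom(P,-)$ carries epimorphisms to surjections of sets. This follows from Definition~\ref{def.weakcon} together with the fact that in a topos epimorphisms are stable under base change: given an epimorphism $q\colon D\to B$ and a map $h\colon P\to B$, the projection $D\times_B P\to P$ is again an epimorphism, hence splits by weak contractibility of $P$, and composing a splitting with the projection to $D$ produces the desired lift of $h$. Granting this, I apply $\Hom(P,-)$ to the tower $(E_i)_{i<\lambda}$ and set $S_i=\Hom(P,E_i)$. Because $\Hom(P,-)$ preserves all limits, the isomorphisms $E_\mu\xrightarrow{\sim}\lim_{i<\mu}E_i$ at limit ordinals $\mu$ pass to isomorphisms $S_\mu\xrightarrow{\sim}\lim_{i<\mu}S_i$, so $(S_i)_{i<\lambda}$ is a $\lambda$-tower in $\bf Set$; and since each $E_{i+1}\to E_i$ is an epimorphism and $P$ is projective, each transition map $S_{i+1}\to S_i$ is surjective. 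Thus $(S_i)_{i<\lambda}$ is a $\lambda$-tower of epimorphisms in $\bf Set$.

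Now transfiniteness of $\bf Set$ applies: as $\lambda\le\alpha$, the transfinite composition $\lim_{i<\lambda}S_i\to S_0$ is surjective. Identifying $\lim_{i<\lambda}S_i$ with $\Hom(P,E_{<\lambda})$ (again because $\Hom(P,-)$ preserves limits) and $S_0$ with $\Hom(P,E_0)$, surjectivity means precisely that the distinguished element $p\in\Hom(P,E_0)$ lifts to some $g\in\Hom(P,E_{<\lambda})$ with $\phi\circ g=p$. This is the factorization sought above, and the proof is complete. I expect the main obstacle to be the careful bookkeeping in the reduction step — verifying that weak contractibility really yields the lifting property (for which the base-change stability of epimorphisms in a topos is essential) and confirming that $\Hom(P,-)$ transports the tower hypotheses verbatim into $\bf Set$; the remainder is a direct appeal to Example~\ref{ex.set.trans} and cancellation of epimorphisms.
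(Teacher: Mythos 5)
Your proof is correct, but it takes a genuinely different route from the paper's own argument --- in fact it is essentially the ``classical'' argument that the paper attributes to the referee in the remarks immediately preceding Proposition~\ref{prop.weakco}: weakly contractible objects are projective (your base-change-and-split argument for this is exactly right), and one can then detect or produce epimorphisms by probing with $\Hom(P,-)$, reducing everything to the transfiniteness of ${\bf Set}$ (Example~\ref{ex.set.trans}). The paper instead argues directly inside $\bf E$: it pulls back the whole tower $(F_i)_{i<\lambda}$ along an epimorphism $E_0\to F_0$ with $E_0$ weakly contractible, and then constructs by transfinite induction a compatible family of splittings $E_0\to E_i$ of the pulled-back tower (Claim~\ref{cl.split}), using weak contractibility at successor steps and the universal property of the limit at limit ordinals; hence the transfinite composition of the pullback tower splits, and the same ``dominated by an epimorphism'' step you use concludes. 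The difference is where the transfinite induction lives: you outsource it to ${\bf Set}$, where it is the axiom-of-choice argument underlying Example~\ref{ex.set.trans}, which makes the proof shorter and more conceptual; the paper performs the induction as an explicit splitting construction because a variant of precisely that construction is needed again in Corollary~\ref{prop.weaktrans}, where one must split transfinite compositions of covering morphisms in a site --- a setting your $\Hom$-probing reduction does not directly cover. One small wording point: what you call ``left-cancellation'' is the standard fact that if a composite $\phi\circ g$ is an epimorphism then so is $\phi$; your use of it is correct.
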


\begin{proof}
Let $ F=(F_i)_{i<\lambda}$ be a tower of epimorphisms in $\bf E$. 
Choose a weakly contractible $E_0$ and an epimorphism $\pi_0:E_0\to F_0$. Let $\pi:E\to F$
be the pullback tower along $\pi_0$. As the pullback of an epimorphism is an epimorphism
in a topos the tower $E$ consists of epimorphisms. 
In the commutative diagram
\[
\xymatrix{
F_{<\lambda}  \ar[d]_{\bf 1} &  \ar[l]  E_{<\lambda}  \ar[d]^{\bf 3} \\
F_0  &  \ar[l]_{\bf 2}  E_0 
}
\]
the morphisms $\bf 2$ and $\bf 3$ are epimorphisms by
Claim~\ref{cl.split}. So as $\bf 1$ is dominated by an epimorphism it is itself an epimorphism. 
\end{proof}

\begin{claim}\label{cl.split}
The morphism $E_{<\lambda}\xrightarrow{\bf 3} E_0$ splits. In particular it is an epimorphism. 
\end{claim}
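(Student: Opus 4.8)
The plan is to build a section of $\mathbf 3$ directly, by producing a compatible family of sections of the tower $E$ via transfinite recursion on $i<\lambda$ and then invoking the universal property of the limit $E_{<\lambda}$. Concretely I would construct morphisms $s_i:E_0\to E_i$ such that for all $i\le j<\lambda$ the composite of $s_j$ with the structure map $E_j\to E_i$ equals $s_i$; each $s_i$ is then a section of the structure map $E_i\to E_0$, and the whole family is a cone from $E_0$ to the tower $E$.

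For the recursion I would start with $s_0=\mathrm{id}_{E_0}$. At a successor stage, given $s_i$, I form the pullback $E_0\times_{E_i}E_{i+1}$ of the structure map $E_{i+1}\to E_i$ along $s_i$; since $E$ consists of epimorphisms and epimorphisms are stable under base change in a topos, the projection $E_0\times_{E_i}E_{i+1}\to E_0$ is an epimorphism. Because $E_0$ is weakly contractible this epimorphism splits, and composing a splitting with the projection to $E_{i+1}$ gives $s_{i+1}:E_0\to E_{i+1}$ lifting $s_i$. At a limit stage $\mu<\lambda$ the already constructed sections $(s_i)_{i<\mu}$ form a cone over $(E_i)_{i<\mu}$, so they induce a morphism $E_0\to E_{<\mu}$; the tower condition $E_\mu=E_{<\mu}$ lets me take this induced morphism to be $s_\mu$. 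Transitivity of the lifting relation then makes the whole family $(s_i)_{i<\lambda}$ compatible. The resulting cone yields a unique $s:E_0\to E_{<\lambda}$ whose $0$-th component is $s_0$, whence $\mathbf 3\circ s=s_0=\mathrm{id}_{E_0}$, so that $s$ is a section and $\mathbf 3$ is an epimorphism.

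The step I expect to be the main obstacle is the transfinite recursion itself: I must ensure that the splittings chosen at successor stages can be selected coherently across all of $\lambda$, which relies on the axiom of choice available in the ZFC framework of the paper, and I must use the tower hypothesis $E_\mu=E_{<\mu}$ precisely at the limit stages so that the inductively assembled cone actually has a target in the tower at which to define $s_\mu$. The remaining ingredients—existence of pullbacks, stability of epimorphisms under base change, and the universal property of limits—are formal properties of a topos and present no real difficulty.
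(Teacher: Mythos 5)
Your proposal is correct and follows essentially the same argument as the paper: a transfinite recursion building compatible splittings $s_i:E_0\to E_i$, using weak contractibility of $E_0$ at successor stages and the tower condition $E_\mu\cong\lim_{i<\mu}E_i$ at limit stages, then passing to the limit to obtain the section of $\mathbf 3$. The only difference is that you spell out the successor step via the pullback $E_0\times_{E_i}E_{i+1}$ and stability of epimorphisms under base change, which the paper leaves implicit in the phrase ``use the weak contractibility of $E_0$''.
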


\begin{proof}[Proof of claim]
We successively  construct a compatible family of splittings $(E_0\xrightarrow{s_i}
E_i)_{i<\mu}$ for $\mu\le \lambda$.
Compatible means that the diagram
\[
\xymatrix{
E_0 \ar[r]^{s_i}  \ar@/_1pc/[rr]_{s_j}& E_i \ar[r]  & E_j 
}
\]
commutes for all $j< i <\mu$. Assume the family of splittings has been constructed for
some $\mu< \lambda$. If
$\mu$ is a successor ordinal use the weak contractibility of $E_0$ to find $s_\mu$ such that the diagram
\[
\xymatrix{
   &  E_\mu \ar[d] \\
E_0  \ar[r]_{s_{\mu-1}}  \ar[ru]^{s_\mu} & E_{\mu-1}
}
\]
commutes. If $\mu$ is a limit ordinal let \[s_\mu=\lim_{i<\mu} s_i:E_0 \to  E_\mu \] be the
morphism obtained from the system $(s_i)_{i<\mu}$ by the universal property of the inverse limit and the isomorphism $E_\mu
\xrightarrow{\sim} \lim_{i<\mu}E_i$.

By this successive construction we can assume that there is a system of splittings $(E_0\xrightarrow{s_i}
E_i)_{i<\lambda}$. The morphism $\lim_{i<\lambda} s_i$ is a splitting of $\bf 3$.
\end{proof}

Another way, beside finding enough weakly contractible objects, to show that a topos is
transfinite, is to find a  site defining the topos in which transfinite
compositions  of
coverings are coverings. We will make this precise in the following.

\begin{defn}\label{def.admiss}
  A coherent site $\bf C$ is called admissible if its topology is subcanonical and for a
  finite family of objects $(C_i)_{i\in I}$ the coproduct $C=\coprod_{i\in I} C_i$ exists
  and $\{ C_i \to C \ |\ i\in I\}$ is a covering.  We furthermore assume that in $\C$ there is a
  strict initial object and coproducts
  are disjoint and
   stable under pullback, see \cite[App.]{MLM} and Definition~\ref{def.dctop}.
\end{defn}

\begin{lem}
The following are equivalent for a coherent subcanonical site $\C$:
\begin{itemize}
\item[(i)] $\C$ is admissible.
\item[(ii)] $\C$ has a strict inital object $\emptyset$ and the essential image of the
  Yoneda functor \[\y: \C \to \y(\emptyset)/ \Sh(\C)\] is closed under finite coproducts
  in the comma category $\y(\emptyset)/ \Sh(\C)$.
\end{itemize}
\end{lem}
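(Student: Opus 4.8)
The plan is to exploit that the Yoneda functor $\y\colon\C\to\Sh(\C)$ is fully faithful and preserves finite limits (both because the topology is subcanonical), and to reduce the entire statement to one identification of sheaves: for $C_1,C_2\in\C$ the coproduct $C=C_1\sqcup C_2$ exists in $\C$ and its summands cover $C$ disjointly and stably if and only if the pushout $\y(C_1)\sqcup_{\y(\emptyset)}\y(C_2)$, formed in $\Sh(\C)$, is representable by $C$. This pushout is precisely the coproduct of $\y(C_1)$ and $\y(C_2)$ in the comma category $\y(\emptyset)/\Sh(\C)$, whose initial object is $\y(\emptyset)$ by construction; moreover, since $\emptyset$ is initial in $\C$, the structure maps are forced and $\Hom_{\y(\emptyset)/\Sh(\C)}(\y(A),\y(B))=\Hom_\C(A,B)$ for $A,B\in\C$. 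This last point lets me pass freely between the comma category and $\C$ on the Yoneda image, and it is the reason the comma category — rather than $\Sh(\C)$ itself — is the correct ambient category when $\y(\emptyset)$ is not assumed to be the initial object of $\Sh(\C)$.

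For (i) $\Rightarrow$ (ii) I would run the descent computation for the covering $\{C_1\to C,\,C_2\to C\}$ of $C=C_1\sqcup C_2$. For a sheaf $F$, the sheaf condition expresses $F(C)$ as the equalizer of $F(C_1)\times F(C_2)\rightrightarrows\prod_{i,j}F(C_i\times_C C_j)$; disjointness gives $C_i\times_C C_j=\emptyset$ for $i\neq j$ and $C_i\times_C C_i=C_i$, so the equalizer collapses to the fibre product $F(C_1)\times_{F(\emptyset)}F(C_2)$ along the restriction maps induced by $\emptyset\to C_i$. By Yoneda this reads $\Hom(\y(C),F)\cong\Hom(\y(C_1)\sqcup_{\y(\emptyset)}\y(C_2),\,F)$ naturally in $F$, whence $\y(C)\cong\y(C_1)\sqcup_{\y(\emptyset)}\y(C_2)$, i.e. the coproduct in the comma category; induction handles arbitrary finite families, and the empty family gives the initial object $\y(\emptyset)$. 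Thus the Yoneda image is closed under finite coproducts.

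For (ii) $\Rightarrow$ (i) I would read the four clauses of admissibility off the representability $\y(C_1)\sqcup_{\y(\emptyset)}\y(C_2)\cong\y(D)$. First, testing against $\y(E)$ and using $\Hom_{\y(\emptyset)/\Sh(\C)}(\y(-),\y(-))=\Hom_\C(-,-)$ identifies $D$ with a coproduct $C_1\sqcup C_2$ in $\C$, so finite coproducts exist. Next, the canonical map $\y(C_1)\sqcup\y(C_2)\to\y(D)$ is a coequalizer, hence an epimorphism; in a subcanonical site an epimorphism $\coprod_i\y(C_i)\twoheadrightarrow\y(C)$ is equivalent to $\{C_i\to C\}$ being a covering, giving the covering clause. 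For disjointness, strictness of $\emptyset$ makes each $\emptyset\to C_i$ a monomorphism, hence each $\y(\emptyset)\to\y(C_i)$ is monic; since $\Sh(\C)$ is a topos, and hence adhesive, the pushout of this span of monomorphisms is a van Kampen square, so it is also a pullback with monic legs, that is $\y(C_1)\times_{\y(C)}\y(C_2)\cong\y(\emptyset)$ with monic inclusions, which $\y$ reflects to $C_1\times_C C_2\cong\emptyset$ with monic inclusions. Finally, for stability I would use that colimits in a topos are universal: pulling the pushout square back along any $\y(D')\to\y(C)$ and using $D'\times_C\emptyset=\emptyset$ yields $\y(D')\cong\y(D'\times_C C_1)\sqcup_{\y(\emptyset)}\y(D'\times_C C_2)$, which by full faithfulness and the existence of coproducts gives $D'\cong(D'\times_C C_1)\sqcup(D'\times_C C_2)$.

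The main obstacle is the disjointness and stability steps in (ii) $\Rightarrow$ (i), which rely on the good exactness of the topos $\Sh(\C)$ — adhesivity (pushouts of monomorphisms are van Kampen) for disjointness, and universality of colimits for stability — together with the need to track carefully that $\y(\emptyset)$ need not be the initial object of $\Sh(\C)$, so that every coproduct must be formed relative to $\y(\emptyset)$. This relative bookkeeping, automatic in the comma category, is exactly what keeps the degenerate empty-family and initial-object cases honest throughout.
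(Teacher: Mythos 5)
The paper states this lemma without proof, so there is nothing of the author's to compare your argument against; I judge it on its own terms. Your two directions are essentially sound. For (i)$\Rightarrow$(ii), the descent computation collapsing the sheaf condition to $F(C)\cong F(C_1)\times_{F(\emptyset)}F(C_2)$ is correct, though you should say explicitly that $C_i\times_C C_i\cong C_i$ uses that the coprojections are monomorphisms, which is the other half of ``disjoint'' besides $C_i\times_C C_j\cong\emptyset$. For (ii)$\Rightarrow$(i), all four steps are legitimate: recovering the coproduct in $\C$ by full faithfulness (using that $\Hom_\C(\emptyset,E)$ is a singleton); the covering clause from the fact that in a subcanonical site a finite family is covering exactly when it becomes jointly epimorphic in $\Sh(\C)$ (the sieve it generates sheafifies to $\y(C)$ iff it contains a covering sieve, by saturation); disjointness from adhesivity of topoi (a pushout along a monomorphism is a van Kampen square, hence also a pullback of monomorphisms), which $\y$ reflects to $\C$; and stability from universality of colimits together with strictness of $\emptyset$.

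There is, however, one point you pass over silently, and it is the only place where the equivalence can actually break: the empty family. Definition~\ref{def.admiss} quantifies over finite families $(C_i)_{i\in I}$, and for $I=\emptyset$ it asks that the empty sieve cover the initial object, equivalently that $\y(\emptyset)$ be the initial object of $\Sh(\C)$, i.e.\ $F(\emptyset)$ a point for every sheaf $F$. Condition (ii) does not give this, and no argument can. Concretely, take a skeleton of finite sets with the topology whose covering sieves are those containing a finite \emph{nonempty} family $\{S_i\to S\}$ with $S=\coprod_i S_i$: this is a coherent subcanonical site with strict initial object satisfying (ii), since every sheaf satisfies $F(S_1\sqcup S_2)\cong F(S_1)\times_{F(\emptyset)}F(S_2)$, so $\y(S_1\sqcup S_2)$ is the pushout $\y(S_1)\sqcup_{\y(\emptyset)}\y(S_2)$; yet every covering sieve on $\emptyset$ contains ${\rm id}_\emptyset$, so the empty sieve is not covering, and indeed $\y(\emptyset)$ is not initial in $\Sh(\C)$ (the constant sheaf with value a two-element set receives two maps from it). So your proof of (ii)$\Rightarrow$(i), and the lemma itself, is valid only under the reading that the covering clause of admissibility concerns nonempty index sets $I$. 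That is surely the intended reading --- formulating (ii) in the comma category $\y(\emptyset)/\Sh(\C)$ rather than in $\Sh(\C)$ only has a point if $\y(\emptyset)$ is allowed to differ from the initial sheaf --- but your write-up should state this proviso explicitly, since your induction handles $I=\emptyset$ in direction (i)$\Rightarrow$(ii) while direction (ii)$\Rightarrow$(i) never addresses it.
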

 
Working with admissible sites instead of coherent sites is no real restriction as the following
lemma shows. For a  site $\C$ we denote by $\ay:\C \to \Sh(\C)$ the composition of the
Yoneda embedding and the sheafification.

\begin{lem}
For any coherent site $\C$ let $\overline{\C}$ be the smallest strictly full subcategory
of $\Sh(\C)$ which contains
the essential image of $\ay$ and which is closed under finite coproducts and finite
limits. Then $\overline{\C}$ with the epimorphic coverings is admissible and the continuous functor
$\ay:\C\to\overline{\C}$ induces an equivalence of topoi. 
\end{lem}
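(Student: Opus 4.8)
The plan is to realize $\overline{\C}$ as a full subcategory of the topos $\Sh(\C)$ and to deduce all of the required structure from the ambient topos, closing with an application of the Comparison Lemma to identify the two topoi.

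First I would record an explicit description of $\overline{\C}$: its objects are those obtained from the representable sheaves $\ay(C)$, $C\in\C$, by iterating finite coproducts and finite limits inside $\Sh(\C)$. Since $\C$ is small the representables form a set up to isomorphism, and closing a set of objects of a locally small category under two finitary operations produces an essentially small category; thus $\overline{\C}$ is small. The remaining structural axioms of Definition~\ref{def.admiss} are inherited from the Grothendieck topos $\Sh(\C)$, because all the relevant (co)limits are computed there and, by construction, land again in $\overline{\C}$: finite limits exist by definition; finite coproducts exist by definition and the canonical family $\{C_i\to\coprod_i C_i\}$ is (isomorphic to) the identity, hence a covering; the initial object of $\Sh(\C)$ is the empty coproduct, lies in $\overline{\C}$, and is strict since $\Sh(\C)$ is a topos; and coproducts are disjoint and stable under pullback because $\Sh(\C)$ is extensive and these properties are expressed by finite limits of coproduct inclusions.

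Next I would equip $\overline{\C}$ with the topology generated by the finite families $\{U_i\to U\}_{i\in I}$ for which $\coprod_{i} U_i\to U$ is an epimorphism in $\Sh(\C)$; this is a coherent topology, and it is stable under the coherent-site operations because epimorphisms in a topos are stable under pullback and composition. For subcanonicity I would use that in a topos every epimorphism is effective: a finite epimorphic family $\{U_i\to U\}$ exhibits $U$ as the coequalizer of the kernel pair of $\coprod_i U_i\to U$, whose terms $U_i\times_U U_j$ again lie in $\overline{\C}$; hence for every $X\in\overline{\C}$ the restriction of $\Hom_{\Sh(\C)}(-,X)$ to $\overline{\C}$ satisfies the sheaf condition, which is exactly subcanonicity. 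This shows $\overline{\C}$ is admissible. Continuity of $\ay:\C\to\overline{\C}$ is then immediate: $\ay$ preserves finite limits (it is sheafification after Yoneda, each of which does), and a finite family $\{C_i\to C\}$ is a covering in $\C$ precisely when $\coprod_i\ay(C_i)\to\ay(C)$ is an epimorphism in $\Sh(\C)$, so $\ay$ sends coverings to coverings.

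Finally I would deduce the equivalence of topoi from the Comparison Lemma (\cite[Exp.~III.4.1]{SGA4}, \cite{MLM}). A Grothendieck topos is canonically equivalent to the category of sheaves on itself for the canonical topology, and the representable sheaves, hence a fortiori the larger family $\overline{\C}$, form a dense generating subcategory. It therefore suffices to check that the finite epimorphic topology on $\overline{\C}$ agrees with the topology induced from the canonical topology of $\Sh(\C)$. One inclusion of topologies is clear; for the converse I would argue that every object of $\overline{\C}$ is a coherent, hence quasi-compact, object of $\Sh(\C)$, since representables are quasi-compact because $\C$ is coherent and quasi-compactness is preserved by finite coproducts and finite limits, so every canonical (jointly epimorphic) covering of an object of $\overline{\C}$ admits a finite refinement inside $\overline{\C}$. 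I expect this quasi-compactness reduction, together with verifying that $\overline{\C}$ genuinely satisfies the density hypothesis of the Comparison Lemma, to be the main point requiring care; granting it, the lemma yields an equivalence $\Sh(\overline{\C})\xrightarrow{\sim}\Sh(\C)$ compatible with $\ay$, as claimed.
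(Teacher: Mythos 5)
The paper states this lemma without giving a proof, so your argument can only be judged on its own terms; its overall architecture is the natural one and most of it is correct. Realizing $\overline{\C}$ inside the topos and inheriting the admissibility axioms (essential smallness, strict initial object, disjoint and universal finite coproducts), verifying the pretopology axioms for the finite jointly epimorphic families, deducing subcanonicity from the fact that every epimorphism in a topos is effective (with kernel pairs staying inside $\overline{\C}$), checking continuity of $\ay$, and then invoking the Comparison Lemma for the full generating subcategory $\overline{\C}\subset\Sh(\C)$ with the induced topology: all of these steps are sound, and the final compatibility with the geometric morphism induced by $\ay$ follows from $\Hom_{\Sh(\C)}(\ay(C),E)\cong E(C)$. (One cosmetic caveat: since the paper explicitly avoids universes, you should not literally take sheaves on $\Sh(\C)$ with its canonical topology; but this is harmless, as the statement you actually use is the standard one that a topos is equivalent to sheaves on any small full generating subcategory with the induced topology, cf.\ \cite[Exp.\ IV]{SGA4}.)

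There is, however, a genuine flaw exactly at the point you yourself flagged as the main one requiring care: you justify the finite-refinement step by asserting that quasi-compactness is preserved by finite limits in $\Sh(\C)$, and this is false in general. Already for schemes, the intersection of two quasi-compact opens of a non-quasi-separated scheme need not be quasi-compact, and this phenomenon persists inside coherent topoi (e.g.\ in the big Zariski topos of affine schemes of bounded cardinality, the sheaf represented by a gluing of two copies of an infinite-dimensional affine space along the complement of a closed point is a fiber product of quasi-compact objects that is not quasi-compact). This is precisely why the notion of coherence (quasi-compact \emph{and} quasi-separated) exists. The repair is standard: since $\C$ is a coherent site, the representable sheaves $\ay(C)$ are not merely quasi-compact but coherent objects of the coherent topos $\Sh(\C)$, and in a coherent topos the coherent objects are stable under finite limits and finite coproducts \cite[Exp.\ VI]{SGA4}. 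Hence every object of $\overline{\C}$ is coherent, in particular quasi-compact, and your argument that every induced-topology covering sieve of $U\in\overline{\C}$ contains a finite jointly epimorphic subfamily — so that the induced topology coincides with the finite epimorphic topology — then goes through. With this substitution your proof is complete.
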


%Clearly, for any coherent topos $\bf E$ we can find an admissible site $\bf C$ with ${\bf
%  E} \cong \Sh({\bf C})$, see \cite[App.]{MLM}. 

  Recall that a morphism $E\to D$ in $\bf C$ is
called a covering morphism if the sieve generated by $E\to D$ is a covering sieve.

\begin{defn}\label{def.trfin}
An admissible site $\bf C$ is called $\alpha$-transfinite if $\alpha$-limits exist in $\C$
and if
transfinite compositions of
$\lambda$-towers of covering morphisms ($\lambda\le \alpha$)  are
covering morphisms, i.e.\
we assume that for a $\lambda$-tower $(F_i)_{i<\lambda}$ in $\bf C$ with $F_{i+1}\to F_i$ a covering
morphism  for all
$i+1<\lambda$
 that $F_{<\lambda}\to F_0$ is a covering morphism.
\end{defn}

\begin{prop} \label{prop.trsite}
The topos $\Sh({\bf C})$ associated with an $\alpha$-transfinite site $\bf C$ is $\alpha$-transfinite. 
\end{prop}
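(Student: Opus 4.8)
The plan is to test epimorphy in $\Sh(\C)$ locally and to transport the transfinite covering property of the site $\C$ through the Yoneda embedding. Recall that, since $\C$ is subcanonical, a morphism $F\to G$ in $\Sh(\C)$ is an epimorphism if and only if it is locally surjective: for every $C\in\C$ and every section $s\colon\y(C)\to G$ there is a covering of $C$ along which $s$ lifts to $F$. Given a $\lambda$-tower of epimorphisms $(E_i)_{i<\lambda}$ with $\lambda\le\alpha$ and a section $s\colon\y(C)\to E_0$, I would form the pullback tower $G_i=\y(C)\times_{E_0}E_i$, so that $G_0=\y(C)$. Because fibre products commute with the tower limits and the pullback of an epimorphism in a topos is an epimorphism, $(G_i)_{i<\lambda}$ is again a tower of epimorphisms and $G_{<\lambda}=\y(C)\times_{E_0}E_{<\lambda}$. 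It then suffices to produce, for each such $C$, a covering morphism $C'\to C$ together with a lift $\y(C')\to G_{<\lambda}$ of $\y(C')\to\y(C)=G_0$; projecting to $E_{<\lambda}$ yields a lift of $s$ over $C'\to C$ and hence shows that $E_{<\lambda}\to E_0$ is an epimorphism.

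The heart of the argument is a transfinite recursion, parallel to the one in Claim~\ref{cl.split}, building a $\lambda$-tower $(C_i)_{i<\lambda}$ in $\C$ with $C_0=C$ together with a compatible system of lifts $t_i\colon\y(C_i)\to G_i$. At a successor stage, given $t_i$, I would pull back the epimorphism $G_{i+1}\to G_i$ along $t_i$; local surjectivity over the representable $\y(C_i)$ provides a finite covering family of $C_i$ through which the Yoneda functor lifts to $G_{i+1}$, and admissibility (Definition~\ref{def.admiss}) lets me merge this finite family into a single covering morphism $C_{i+1}\to C_i$ by passing to the coproduct $\coprod$, which the Yoneda embedding preserves. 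This produces $t_{i+1}$ strictly compatible with $t_i$. At a limit stage $\mu$, I would set $C_\mu=\lim_{i<\mu}C_i$, which exists because $\alpha$-limits exist in $\C$ and $\mu\le\alpha$; since $\y\colon\C\to\Sh(\C)$ preserves limits (the site being subcanonical), one has $\y(C_\mu)=\lim_{i<\mu}\y(C_i)$, and the universal property of the limit produces $t_\mu$ landing in $G_\mu=\lim_{i<\mu}G_i$. By construction $(C_i)_{i<\lambda}$ is a tower whose successor morphisms are covering morphisms.

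Finally I would invoke the defining property of an $\alpha$-transfinite site (Definition~\ref{def.trfin}): the transfinite composition $C_{<\lambda}=\lim_{i<\lambda}C_i\to C_0=C$ is a covering morphism. Passing to the limit of the compatible system $(t_i)_{i<\lambda}$ and using once more that $\y$ commutes with this limit gives a lift $\y(C_{<\lambda})\to\lim_{i<\lambda}G_i=G_{<\lambda}$ over $\y(C_{<\lambda})\to\y(C)$, which is exactly the datum required above. I expect the main obstacle to be the coherence bookkeeping at the limit stages: one must check that the pullback towers $G$ are genuine towers (so that $G_\mu=\lim_{i<\mu}G_i$), that the recursively chosen lifts are strictly compatible so that the limit map exists, and that $\y$ commutes with both kinds of limits in play, namely the ordinal-indexed tower limits and the fibre products. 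The set-theoretic choices made in the recursion are legitimate over the small site $\C$ by the axiom of choice.
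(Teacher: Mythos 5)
Your proof is correct and follows essentially the same route as the paper's: both perform a transfinite recursion producing a tower of covering morphisms $(C_i)_{i<\lambda}$ in $\C$ beneath the given tower of epimorphisms (extracting finite covering families via coherence, merging them into a single covering morphism via the coproducts provided by admissibility, taking limits in $\C$ at limit stages), and both conclude by applying the $\alpha$-transfiniteness of the site and transporting the resulting covering morphism $C_{<\lambda}\to C_0$ through the Yoneda embedding. The only cosmetic difference is that you test epimorphy locally on sections and carry along the pullback tower $G_i=\y(C)\times_{E_0}E_i$, whereas the paper first chooses an epimorphism onto $F_0$ from a coproduct of representables and forms the relevant fibre products stage by stage.
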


\begin{proof}
Let $(F_i)_{i<\lambda}$ be a tower of epimorphisms in $\Sh({\bf C})$ ($\lambda \le
\alpha$). Choose a family $(C_r)_{r\in R}$ of objects in $\bf C$ and an epimorphism
$\pi_1:\coprod_{r\in R} \y(C_r) \to F_0$.
Recall that $\y:\C\to \Sh(\C )$ denotes the Yoneda embedding.
 For simplicity of notation we assume that
$R=\{ 0\}$ consists of only one element. Choose a family
$(C^{(1)}_r)_{r\in R_1}$ of elements of $\bf C$ and an epimorphism 
\eq{}{
\coprod_{r\in R_1} \y(C^{(1)}_r)\to  \y(C_0) \times_{F_0} F_1 .
}
As $\y(C_0)$ is quasi-compact there is a finite subset $\tilde R_1\subset R_1$ such that the
composite morphism
\eq{}{
\coprod_{r\in \tilde R_1} \y(C^{(1)}_r) \to \y(C_0) \times_{F_0} F_1 \to \y(C_0)
}
is an epimorphism. As the Yoneda functor is fully faithful, this morphism is induced by a covering morphism
$C_1=\coprod_{r\in \tilde R_1} C^{(1)}_r\to C_0$, see \cite[III.7 Cor.\ 7]{MLM}.
We get a commutative diagram
\[
\xymatrix{
\y(C_1) \ar[r] \ar[d] &  F_1 \ar[d]\\
 \y(C_0) \ar[r]& F_0
}
\]
which we are going to extend successively to the morphism of towers~\eqref{eq.motol}.

For doing so we assume now that  for $\mu<\lambda$ we have constructed a tower $(C_i)_{i< \mu}$  of covering morphisms  in $\bf C$ 
 and a morphism of towers $\pi_\mu:(\y(C_i))_{i<\mu}  \to F|_{\mu}$. If $\mu$ is a successor ordinal we proceed as
 above to find a covering morphism $C_\mu\to C_{\mu-1}$ and an extension  of $\pi_\mu$ to a morphism of
 towers 
\eq{eq.extt}{\pi_{\mu+1}:(\y(C_i))_{i\le \mu}  \to F|_{\mu+1}. }
If $\mu$ is a limit ordinal we let $C_\mu=\lim_{i<\mu}C_i$ and we let the morphism $
\y(C_\mu)= \lim_{i<\mu} \y(C_i)
\to F_\mu$ be the inverse limit of the morphism of towers $\pi_\mu$. This defines the
required extension as in \eqref{eq.extt} in the case of a limit ordinal $\mu$.

In the end this successive construction produces a tower of covering morphisms
$(C_i)_{i<\lambda}$ and a morphism of towers 
\eq{eq.motol}{
\pi:(\y(C_i))_{i<\lambda}  \to F.
}

The morphism $C_{<\lambda}=\lim_{i<\lambda} C_i\to C_0$ is the composition of a tower of covering morphisms, so
is a covering morphism itself, because $\bf C$ is
$\alpha$-transfinite.
In the commutative diagram
\[
\xymatrix{
\y(C_{<\lambda}) \ar[r] \ar[d]_{\bf 1}& F_{<\lambda} \ar[d]^{\bf 3}\\
\y(C_0) \ar[r]^{\bf 2}  & F_0  
}
\]
the morphism $\bf 1$ is the Yoneda image of a covering morphism and therefore an epimorphism. As also $\bf 2$ is an epimorphism, we see that $\bf 3$ is
dominated by an epimorphism and so is an epimorphism itself.
\end{proof}

\begin{ex}\label{ex.fpqc} For an infinite cardinal $\alpha$ let ${\bf Aff}_\alpha$ be the category of
  affine schemes $\Spec R$ with $\card (R)\le\alpha$. We endow ${\bf Aff}_\alpha$ with the
  fpqc-topology. Recall that the fpqc-topology on ${\bf Aff}_\alpha$ is generated by
  coverings $\{ U_i \to U\ | \ i\in I\}$ with $I$ finite, $U_i\to U$ flat and such that
  \[\coprod_{i\in I} U_i \to U\] is surjective.
Clearly, the site ${\bf Aff}^{\rm fpqc}_\alpha$ is $\alpha$-transfinite, so by
Proposition~\ref{prop.trsite} the fpqc-topos $\Sh({\bf Aff}^{\rm fpqc}_\alpha)$ is $\alpha$-transfinite.
\end{ex}

\section{Main theorems}

\noindent
Let $\alpha$ be an infinite cardinal.
Let $\bf Si$ be the $2$-category in the sense of \cite[XII.3]{ML} whose objects are
admissible sites $\bf C$ (Definition~\ref{def.admiss}), whose $1$-morphisms are
 continous functors $\C \to \D$ preserving finite limits  and whose $2$-morphisms are the
natural equivalences.
 Similarly, we consider the $2$-subcategory ${\bf Si}_\alpha$ of
 $\bf Si$ whose objects are the $\alpha$-transfinite sites  (Definition~\ref{def.trfin}) whose $1$-morphisms are the 
 continuous functors preserving  $\alpha$-limits and whose
 $2$-morphisms are all natural equivalences as above.

\begin{thm}\label{thm1}
For an infinite cardinal $\alpha$
the canonical functor of $2$-categories ${\bf Si}_\alpha \to {\bf Si}$ admits a left
adjoint \[\langle \alpha \rangle  : {\bf Si} \to {\bf
  Si}_\alpha .\]
For $\C$ admissible let $\E=\Sh(\C)$ and $ \langle \alpha \rangle \E =\Sh ( \langle \alpha
\rangle \C )$ be the associated topoi. The induced morphism of topoi $\pi_\alpha : \langle
\alpha \rangle \E \to \E $  has the property that $\pi_\alpha^*$ is fully faithful and
preserves cohomology of bounded below complexes of abelian sheaves.
\end{thm}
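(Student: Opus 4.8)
The plan is to prove Theorem~\ref{thm1} in three stages: first construct the site $\langle\alpha\rangle\C$ and verify it lies in ${\bf Si}_\alpha$, then establish the adjunction, and finally analyze the cohomological properties of $\pi_\alpha^*$.

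\medskip

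For the construction, I would set $\langle\alpha\rangle\C$ to be the pro-category $\Pro{\alpha}\C$ equipped with the transfinite topology described in the introduction, namely the weakest topology making the canonical functor $\pi_\alpha^\C:\C\to\Pro{\alpha}\C$ continuous and making transfinite compositions of covering morphisms into covering morphisms. By Proposition~\ref{prop.unipro} the category $\Pro{\alpha}\C$ has all $\alpha$-limits whenever $\C$ has finite limits, and $\pi_\alpha^\C$ preserves finite limits. The nontrivial point here is to check that this topology is well-defined and coherent, that the resulting site is admissible (one must verify the strict initial object, disjointness and pullback-stability of coproducts survive in the pro-category), and that it is $\alpha$-transfinite in the sense of Definition~\ref{def.trfin}. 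The fact that it lies in ${\bf Si}_\alpha$ then gives, via Proposition~\ref{prop.trsite}, that $\langle\alpha\rangle\E=\Sh(\langle\alpha\rangle\C)$ is an $\alpha$-transfinite topos.

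\medskip

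For the adjunction, I would verify the universal property directly: given an $\alpha$-transfinite site $\D$ and a continuous finite-limit-preserving functor $\C\to\D$, I must produce a unique (up to the $2$-morphisms) continuous $\alpha$-limit-preserving functor $\langle\alpha\rangle\C\to\D$ factoring it through $\pi_\alpha^\C$. The existence of the underlying functor of categories is exactly the content of Proposition~\ref{prop.unipro}; the extra work is to show this extension is continuous for the transfinite topologies. Continuity on coverings coming from $\C$ is automatic by construction, and continuity on transfinite compositions follows because $\D$ was assumed $\alpha$-transfinite, so covering morphisms of $\langle\alpha\rangle\C$ generated by transfinite composition map to covering morphisms of $\D$. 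Since the transfinite topology is generated by these two types of coverings, continuity follows. Naturality and the $2$-categorical coherence are then formal.

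\medskip

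The cohomological statement is where I expect the main obstacle to lie. Full faithfulness of $\pi_\alpha^*$ should reduce to showing that the unit $\sF\to(\pi_\alpha)_*\pi_\alpha^*\sF$ is an isomorphism, which I would check on the generators $\y(C)$ by comparing sections, using that $\pi_\alpha^\C$ is fully faithful and initial among the relevant cofiltered limits. The preservation of cohomology of bounded below complexes is harder: the strategy is to show that $R(\pi_\alpha)_*$ computes the same cohomology as on $\E$, for which one reduces by a spectral sequence and d\'evissage to the case of a single abelian sheaf, and then to showing that higher direct images $R^q(\pi_\alpha)_*\pi_\alpha^*\sF$ vanish for $q>0$. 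This vanishing should follow from a Leray-type or \v{C}ech computation on the generators, using that the covering morphisms added in the transfinite topology are built from cofiltered limits of covering morphisms already present in $\C$, so that on cohomology of a fixed bounded below complex the passage to the limit does not introduce new classes -- this is the analog of the repleteness argument of \cite[Sec.\ 3]{BS}. The boundedness below is essential precisely because it prevents the $\lim^1$ and higher derived-limit contributions from the transfinite index from surviving.
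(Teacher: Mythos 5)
Your proposal follows essentially the same route as the paper's proof: $\langle\alpha\rangle\C$ is $\Pro{\alpha}\C$ with the transfinite topology (shown coherent and admissible via the explicit basis of distinguished coverings, Propositions~\ref{prop.sttop} and~\ref{prop.proadm}), adjointness comes from the universal property of pro-categories (Proposition~\ref{prop.unipro}) together with the definition of the transfinite topology, full faithfulness from the colimit formula for $\pi_\alpha^*$ (Lemma~\ref{lem.pullpro}), and cohomology preservation from a \v{C}ech computation on level representations of the distinguished covering morphisms (Proposition~\ref{prop.compa}). The one step to tighten is the last one: the reduction should be to an \emph{injective} sheaf on $\C$, as in the paper, rather than to an arbitrary single abelian sheaf, because the \v{C}ech/Cartan criterion of \cite[Prop.\ V.4.3]{SGA4} yields $H^j(U,\pi_\alpha^*K)=0$ for $j>0$ only when the level-wise \v{C}ech cohomology vanishes, which holds for injective $K$; the case of a general sheaf then follows by dimension shifting, so this is a reordering of your d\'evissage rather than a genuine gap.
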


For the precise meaning of adjointness between $2$-categories in our sense see \cite[Sec.\
5.2]{Lu}.
The proof of Theorem~\ref{thm1} is given in the following two sections. In
Section~\ref{sec.pros} we define the site $ \langle \alpha \rangle \C$ as the category of
pro-objects $\Pro{\alpha} \bf C $ with the so called transfinite topology. In
Proposition~\ref{prop.proadm} we show that this site is admissible.  The adjointness
property is then obvious from the definition. The fact that $\langle \alpha \rangle$ is
fully faithful is immediate from Lemma~\ref{lem.pullpro}. The preservation of cohomology
is shown in Proposition~\ref{prop.compa}.

Unfotunately, we do not know whether the topos $ \langle \alpha \rangle \E$ depends on the site $C$ or
only on the topos $\E$.  
 Roughly speaking Theorem~\ref{thm1} means that for any admissible site $\C$
 we get a tower of topoi
\eq{eq.towtop}{
 \cdots \to\langle \aleph_{\lambda} \rangle {\bf E}  \xrightarrow{} \cdots  \to \langle \aleph_1 \rangle {\bf E}  \xrightarrow{}  \langle \aleph_0 \rangle {\bf E}  \xrightarrow{} {\bf E}
}
indexed by all ordinals $\lambda$, such that the higher up we get the topoi become `more transfinite'.
In fact our second main theorem tells us, see Corollary~\ref{cor.main}, that from some
point on
the topoi in the tower \eqref{eq.towtop} in fact are transfinite.

\begin{thm} \label{thm2}
For any admissible site $\bf C$ there is a cardinal $\beta$ such that for all cardinals
$\alpha\ge \beta$ the topos $\langle \alpha \rangle ({\bf E})=\Sh (\langle \alpha \rangle ({\C})) $ has enough weakly
contractible objects. More precisely, in $\langle \alpha \rangle ({\bf E})$ there exists a generating set of coherent, weakly contractible objects. 
\end{thm}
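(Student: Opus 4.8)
The plan is to produce, for $\alpha$ sufficiently large, a generating set of coherent objects in $\langle\alpha\rangle\E$ each of which is weakly contractible, i.e.\ such that every epimorphism onto it splits. The natural candidates are pro-objects obtained by iterating covers to death. The idea is as follows. For any coherent object $C$ of $\langle\alpha\rangle\E$ (equivalently, up to the identification, any object of $\C$ viewed in $\Pro{\alpha}\C$), I want to build a pro-object $\tilde C = (C_i)_{i<\lambda}$ that is a $\lambda$-tower of covering morphisms in $\Pro{\alpha}\C$ with $C_0=C$, constructed so that $\tilde C$ can no longer be covered nontrivially. Concretely, at each stage I enumerate all covering morphisms of the current object $C_i$ (there are at most $\alpha$ of them, after bounding cardinalities appropriately) and refine $C_{i+1}$ so that every such cover acquires a section; at limit ordinals I pass to the limit along the tower, which exists and is again a covering morphism because the enlarged site $\langle\alpha\rangle\C$ is $\alpha$-transfinite by construction. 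The transfinite composition $\tilde C \to C$ is then a covering morphism, hence $\y(\tilde C)\to\y(C)$ is an epimorphism, so the collection of such $\tilde C$ is generating.

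First I would make precise the refinement step and the cardinality bookkeeping. The key quantitative point is that for a coherent site, the set of isomorphism classes of covering morphisms of a fixed object is bounded by some cardinal depending only on $\C$; taking $\beta$ to be a sufficiently large cardinal closed under the relevant operations (so that the whole transfinite construction stays inside $\Pro{\beta}\C$ and all towers have length $\le\alpha$) guarantees that for $\alpha\ge\beta$ the construction terminates, in the sense that after $\alpha$ steps every covering morphism of $\tilde C$ already admits a section. This is the transfinite-induction analog of the standard argument producing a weakly contractible pro-\'etale cover in \cite{BS}, and the cofinality/regularity of the ambient cardinal is what forces the process to stabilize. I would phrase the stabilization as: a covering morphism $D\to\tilde C$ factors through some finite stage, i.e.\ is pulled back from a covering morphism $D'\to C_i$ of some $C_i$ with $i<\alpha$, and by construction that cover was split at stage $i+1$; hence $D\to\tilde C$ splits.

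Next I would translate this back into the statement that $\tilde C$ is weakly contractible in the topos $\langle\alpha\rangle\E$. An epimorphism $G\to\y(\tilde C)$ in the topos is, after refinement, dominated by the Yoneda image of a covering morphism of $\tilde C$ in the site (using coherence to reduce to a single covering morphism, exactly as in the proof of Proposition~\ref{prop.trsite} via \cite[III.7 Cor.\ 7]{MLM}); since that covering morphism splits by the previous paragraph, and a splitting at the level of the site induces a splitting $\y(\tilde C)\to G$ in the topos, the epimorphism $G\to\y(\tilde C)$ splits. Thus each $\tilde C$ is a coherent, weakly contractible object, and since the $\tilde C$ cover every coherent object (which in turn generate), they form a generating set.

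The main obstacle I expect is precisely the cardinality control in the transfinite induction: ensuring that the tower used to build $\tilde C$ has length $\le\alpha$, that each $C_i$ remains an $\alpha$-bounded pro-object, and above all that the process actually \emph{stabilizes} rather than producing new covers forever. This requires choosing $\beta$ so that the cardinal $\alpha$ is regular and large enough that the set of covering morphisms encountered at every stage is cofinally exhausted before stage $\alpha$; the delicate point is that passing to limits at limit ordinals can a priori create new covering morphisms, so the bookkeeping must enumerate covers of the \emph{final} object and verify each is pulled back from a bounded stage. Making this termination argument rigorous, and isolating the single cardinal $\beta$ that works uniformly, is the technical heart of the proof; everything else reduces to the admissibility and $\alpha$-transfiniteness of $\langle\alpha\rangle\C$ established in Theorem~\ref{thm1} together with the splitting mechanism already used in Claim~\ref{cl.split} and Proposition~\ref{prop.trsite}.
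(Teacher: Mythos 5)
Your construction is essentially the paper's own: the paper takes $\beta$ such that $\C$ is $\beta$-small (Definition~\ref{def.small}), notes that $\Pro{\alpha}\C$ with the weak topology is then $\alpha$-small for $\alpha\ge\beta$ (Proposition~\ref{prop.small}), and for each $U$ forms $\mathcal P(U)=\prod_{(V\to U)\in K(U)}(V\to U)$, a transfinite covering morphism, iterating to get $\mathcal P^\infty(U)\to U$; so the idea of transfinitely splitting a bounded generating family of covers is the right one. The genuine gap is in your verification that $\tilde C$ is weakly contractible. You claim that every covering morphism $D\to\tilde C$ of the limit is pulled back from a covering morphism of some bounded stage $C_i$, $i<\alpha$, and you invoke regularity and cofinality of $\alpha$ to force this stabilization. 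This is exactly where the two topologies on $\Pro{\alpha}\C$ diverge. For distinguished \emph{weak} covering morphisms the claim is true --- not because of any largeness or regularity of $\alpha$, but because by Proposition~\ref{prop.weaktop} such a morphism is by definition a pullback of a covering in $\C$ along a level map, and every level of the pro-object $\tilde C$ factors through some stage $C_i$. For \emph{transfinite} covering morphisms --- which by Proposition~\ref{prop.sttop} are precisely what you must split, since distinguished transfinite coverings form a basis of the topology of $\langle\alpha\rangle\C$ --- the claim is false: a distinguished transfinite covering morphism of $\tilde C$ is itself the limit of a tower of length up to $\alpha$ of weak covering morphisms built over $\tilde C$, and such a limit is not pulled back from any bounded stage of your tower. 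No choice of $\beta$ and no regularity hypothesis on $\alpha$ repairs this, because the offending towers live over the limit object itself rather than over any $C_i$.

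The missing idea is the paper's separation of the problem into two steps: first show that $\mathcal P^\infty(U)$ (your $\tilde C$) is weakly contractible for the \emph{weak} topology, where the bounded-stage argument is valid (in fact $\omega$ iterations of $\mathcal P$ already suffice there, each iteration being a tower of length $\le\alpha$); then invoke Corollary~\ref{prop.weaktrans}, which upgrades weak contractibility in the weak topology to weak contractibility in the transfinite topology. Its proof splits a transfinite composition $\lim_{i<\lambda}U_i\to U$ not by descending to a bounded stage but by constructing the sections $U\to U_i$ successively by transfinite induction, exactly as in Claim~\ref{cl.split}. You gesture at that splitting mechanism in your closing sentence, but the explicit argument you give in its place (stabilization via cardinality bookkeeping) is the step that would fail. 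A smaller but related issue: ``enumerate all covering morphisms of $C_i$'' must mean weak covering morphisms; the transfinite covering morphisms of an object need not admit a refining set of cardinality $\le\alpha$, and it is precisely the $\alpha$-smallness of the \emph{weak} topology (Proposition~\ref{prop.small}) that makes your enumeration legitimate.
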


Recall that an object $C$ of $\E$ is quasi-compact if any covering family has a finite
subfamily which is covering. The object  $C$ is called coherent if it is quasi-compact and for any
quasi-compact objects $S,T$ of $\E$ and any two morphisms $S\to C$, $T\to C$ the object
$S\times_C T$ is quasi-compact \cite[Exp.\ VI.1]{SGA4}. 

The proof of Theorem~\ref{thm2} is given in the first part of Section~\ref{sec.weak}. 
Using Proposition~\ref{prop.weakco} we deduce:

\begin{cor}\label{cor.main}
For any admissible site $\bf C$ there is a cardinal  $\beta$ such that for all cardinals
$\alpha\ge \beta$ the topos $\langle \alpha \rangle {\bf E}$ is transfinite.
\end{cor}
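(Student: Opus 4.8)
The plan is to deduce the corollary directly from the two results already in hand, since its entire content is the conjunction of Theorem~\ref{thm2} and Proposition~\ref{prop.weakco}; there is essentially no independent argument to carry out.

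First I would invoke Theorem~\ref{thm2}: for the given admissible site $\bf C$ it produces a cardinal $\beta$ such that for every cardinal $\alpha\ge\beta$ the topos $\langle\alpha\rangle{\bf E}$ admits a generating set of coherent, weakly contractible objects. From this I would extract the statement that $\langle\alpha\rangle{\bf E}$ has \emph{enough} weakly contractible objects in the sense of Definition~\ref{def.weakcon}. This passage is the one small verification internal to the corollary: given an arbitrary object $C$, one covers it by an epimorphism $\coprod_r G_r\twoheadrightarrow C$ from a coproduct of generators $G_r$ from the generating set, and then uses the remark recorded after Definition~\ref{def.weakcon} that a small coproduct of weakly contractible objects is again weakly contractible. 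Hence the source of this epimorphism is weakly contractible, and $\langle\alpha\rangle{\bf E}$ has enough weakly contractible objects.

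Next I would apply Proposition~\ref{prop.weakco}, which asserts that any topos with enough weakly contractible objects is transfinite. Taking the same $\beta$, for every $\alpha\ge\beta$ the topos $\langle\alpha\rangle{\bf E}$ satisfies the hypothesis of that proposition and is therefore $\alpha'$-transfinite for all cardinals $\alpha'$, i.e.\ transfinite in the sense of Definition~\ref{def.trtop}. This yields exactly the asserted $\beta$ and completes the deduction.

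The main obstacle does not lie in this argument but entirely in Theorem~\ref{thm2}, whose proof is what actually supplies the cardinal $\beta$ together with the generating set of coherent weakly contractible objects in $\langle\alpha\rangle{\bf E}$. Once that theorem is available, the corollary follows in two lines, the only nontrivial point being the closure of weak contractibility under small coproducts used above.
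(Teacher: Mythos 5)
Your proof is correct and follows essentially the same route as the paper, which deduces Corollary~\ref{cor.main} precisely by combining Theorem~\ref{thm2} with Proposition~\ref{prop.weakco}. Your extra verification that a generating set of coherent, weakly contractible objects yields \emph{enough} weakly contractible objects (covering any object by a coproduct of generators and using the remark after Definition~\ref{def.weakcon} that small coproducts of weakly contractible objects are weakly contractible) is a detail the paper absorbs into the statement of Theorem~\ref{thm2} itself.
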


\begin{rmk}\label{rmk.weakco}
The cardinal $\beta$ in Theorem~\ref{thm2} and Corollary~\ref{cor.main} can be chosen to
be $\card (\Mor(\C))$. More precisely
$\beta$ can be chosen in such a way that the admissible site
$\C$ is $\beta$-small. For the notion of smallness see Definition~\ref{def.small}. 
\end{rmk}

\section{The pro-site of a coherent site}
\label{sec.pros}

\noindent
Let $\bf C$ be a coherent site and let $\alpha$ be an infinite cardinal.
We are going to construct two topologies on the pro-category $\Pro{\alpha} \bf C$ defined in Section~\ref{sec.tow}, such that the embedding of categories $\bf
C \to  \Pro{\alpha} \bf C$ is continuous, i.e.\ maps coverings to coverings. Recall that
this embedding also preserves finite limits.

\subsection*{Weak topology}
The weak topology on $ \Pro{\alpha} \bf C$ is defined as the weakest topology such that
the functor  $\bf
C \to  \Pro{\alpha} \bf C$ is continuous.

Clearly, for any covering morphism $V\to W $ in $\C$ and for a morphism $U\to W $ in  $
\Pro{\alpha} \C$ the base change $V\times_W U \to U$ is a covering morphism in the weak
topology. We call such weak covering morphisms distinguished. Similarly, if $\{ W_i \to W\
| \ i\in I \}$ is a finite covering in $\C$ the family $\{ W_i\times_W U\to U\ |\ i\in I
\}$ is a weak covering in  $\Pro{\alpha} \C$, which we call distinguished.

One can give an explicit level representation of the distinguished weak coverings. Let $F:{I} \to \bf C$ be
an object of $\Pro{\alpha} \bf C$. We assume that $I$ has a final element $i_\circ$ and that there is
given a covering $\{ C_w\to F(i_\circ) \ |\ w\in W\}$ in the site $\bf C$. Let $F_w:{I}
\to \bf C$ be the functor given by $F_w(i)=F(i) \times_{F(i_\circ )} C_w$.  Then
\eq{eq.procov}{
\{F_w\to F\ |\ w\in W  \}  
} 
is a distinguished covering in  $\Pro{\alpha} \bf C$ and all distinguished coverings are
of this form up to isomorphism.

\begin{prop} \label{prop.weaktop} For a coherent site $\C$
the weak topology on $\Pro{\alpha} \bf
C$ is coherent and has as a basis the coverings which have level representations of the
form \eqref{eq.procov}, i.e.\ the distinguished weak coverings.
\end{prop}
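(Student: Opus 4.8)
The plan is to prove Proposition~\ref{prop.weaktop} by establishing three things: that the distinguished weak coverings of the form \eqref{eq.procov} form a basis for a Grothendieck topology (a pretopology), that this topology equals the weak topology, and that the resulting topology is coherent.

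First I would verify that the distinguished weak coverings satisfy the axioms of a basis (pretopology). Stability under base change is the key closure property. Given a distinguished covering $\{F_w \to F \mid w \in W\}$ arising from a covering $\{C_w \to F(i_\circ) \mid w \in W\}$ in $\C$ as in \eqref{eq.procov}, and given an arbitrary morphism $G \to F$ in $\Pro{\alpha}\C$, I would need to show the pulled-back family $\{G \times_F F_w \to G \mid w \in W\}$ is again distinguished. The point is that pullbacks of distinguished coverings should again be distinguished: by passing to a common level representation (using that every pro-object has a level representation over a cofinite directed set of cardinality $\le \alpha$) one reduces to a morphism of pro-systems, and then the base-change covering in $\C$ pulls back along a suitable level map to another finite covering in $\C$, using that coverings in the coherent site $\C$ are stable under base change. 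Identities are trivially distinguished (take the one-element covering $\{F(i_\circ) \to F(i_\circ)\}$ after a cofinal reindexing). For the transitivity/composition axiom, one composes a distinguished covering with distinguished coverings of each member; after passing to a sufficiently fine common level this reduces to the composition axiom for coverings in $\C$, again using that $\C$ is coherent so finite coverings compose to finite coverings.

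Next I would show that the topology generated by the distinguished weak coverings coincides with the weak topology. By definition the weak topology is the weakest topology making $\C \to \Pro{\alpha}\C$ continuous, so any topology in which the images of $\C$-coverings are coverings must refine it; conversely the distinguished coverings are built precisely from images of $\C$-coverings (the special case $F = $ image of an object, $I$ trivial) together with base change, so they are covering in the weak topology, whence the topology they generate is contained in the weak topology. For the reverse inclusion, since the distinguished coverings are stable under base change and composition as just checked, they form a basis of an honest topology in which every $\C$-covering $\{W_i \to W\}$ becomes covering (this is exactly the distinguished case with $U = W$); by minimality of the weak topology this generated topology contains the weak topology. Hence they are equal, and the distinguished weak coverings form a basis.

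Finally, coherence. Since the topology has a basis consisting of \emph{finite} families $\{F_w \to F \mid w \in W\}$ (the index set $W$ is a covering in $\C$ and hence may be taken finite, as $\C$ is coherent), the topology is generated by finite coverings, which is the defining feature of a coherent topology; I would also note that $\Pro{\alpha}\C$ has finite limits since $\C$ does, so it is a coherent site. The main obstacle I expect is the base-change step: carefully reducing an arbitrary morphism $G \to F$ of pro-objects to a single level and checking that the fiber product $G \times_F F_w$ there is again of the distinguished form \eqref{eq.procov} requires a clean manipulation of level representations and the compatibility of cofiltered limits with the finite fiber products defining the $F_w$. Everything else is a fairly mechanical transport of the pretopology axioms for $\C$ through the embedding $\C \to \Pro{\alpha}\C$.
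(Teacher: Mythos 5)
Your proposal is correct and follows essentially the same route as the paper: verify the basis (pretopology) axioms for the distinguished weak coverings, with the composition axiom handled by passing to a common level representation (using finiteness of $W$) and invoking the composition axiom of the coherent site $\C$, then identify the generated topology with the weak topology by minimality and note coherence from the finiteness of the basis coverings. The only cosmetic difference is that the paper gets pullback stability ``by definition'' (since a distinguished covering of $F$ is a pullback of a $\C$-covering along $F\to F(i_\circ)$, pulling back further along $G\to F$ is again such a pullback), whereas you re-derive it by a slightly heavier level-representation argument.
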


\begin{proof}
We have to show that the system of distinguished weak coverings defines a basis $\mathcal B$ for a topology on $\Pro{\alpha} \bf
C$. Clearly, an isomorphism is
a covering in $\mathcal B$ and the pullback of a covering in $\mathcal B$ exists and is
itself a covering in $\mathcal B$ by definition.

The  property we have to check is that the composition of coverings from
$\mathcal B$ is a
covering in $\mathcal B$. More precisely, let $\{F_w\to F\ |\ w\in W  \}$ be a covering in
$\mathcal B$ of the form \eqref{eq.procov}, i.e.\ with a level representation indexed by
the cofiltered $\alpha$-category 
$ I$ with final element $i_\circ$. Given coverings  $\{G_{w,v}\to F_w\ | \ v\in W_w
\}$ in $\mathcal B$ for $w\in W$ we have to show that the composite morphisms 
\eq{eq.covcomp}{\{
G_{w,v}\to F\ | \ w\in W, v\in W_w \}
}
form a covering in $\mathcal B$. Changing the level representation (here we use that $W$ is finite) we can assume that the
$G_{w,v}$ are also indexed by $ I$ and that $G_{w,v}(i)=F_w(i)\times_{F_w(i_\circ)}
D_{w,v}$ for all $i\in I$. Here $\{D_{w,v}\to F_w(i_\circ)\ | \ v\in W_w \}$ are coverings in $\bf C$. So
\eqref{eq.covcomp} is level equivalent to the pullback of the covering $\{D_{w,v}\to
F(i_\circ)\ | \ w\in W, v\in W_w \} $ along $F\to F(i_\circ)$ and therefore is a covering in
 $\mathcal B$.
\end{proof}

\begin{defn}\label{def.small} Let $\alpha$ be an infinite cardinal.
We say that a site $\bf C$ is $\alpha$-small if for any object $C$ in $\bf C$
there is a set of covering morphisms $K(C)$ of $C$ with $\card K(C)\le \alpha$ such that
for any covering morphism
$E \to C$ in $\bf C$ there is $D\to C$ in $K(C)$ and a factorization $D\to E\to C$.
\end{defn}

Clearly, any coherent site whose underlying category is an $\alpha$-category is $\alpha$-small.

\begin{prop}\label{prop.small}
If the coherent site $\bf C$ is $\alpha$-small the pro-site $\Pro{\alpha}\bf C$ with the
weak topology is also $\alpha$-small.
\end{prop}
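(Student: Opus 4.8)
The plan is to read off an $\alpha$-small structure on $\Pro{\alpha}\C$ from that of $\C$ one level at a time, and then to verify that the passage to pro-objects loses nothing. Fix an object $U$ of $\Pro{\alpha}\C$ together with a level representation $U\colon I\to\C$ in which $I$ is a cofinite directed set with $\card I\le\alpha$; such a representation exists by the discussion in Section~\ref{sec.tow}. For each $i\in\Ob(I)$ there is the canonical projection $U\to U(i)$ onto the constant pro-object $U(i)$, and since $\C$ is $\alpha$-small I may fix a cofinal set $K(U(i))$ of covering morphisms of $U(i)$ with $\card K(U(i))\le\alpha$. I then define
\[
K(U)=\{\, U\times_{U(i)}V\to U \;:\; i\in\Ob(I),\ (V\to U(i))\in K(U(i))\,\},
\]
the fibre products existing because $\C$, being coherent, has finite limits. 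Each element of $K(U)$ is a distinguished weak covering morphism, and since $\card\Ob(I)\le\card\Mor(I)\le\alpha$ while each $K(U(i))$ has cardinality at most $\alpha$, the infinite cardinal $\alpha$ absorbs the product and $\card K(U)\le\alpha$.

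To see that $K(U)$ is cofinal, let $E\to U$ be an arbitrary weak covering morphism. By Proposition~\ref{prop.weaktop} the covering sieve it generates contains a distinguished weak covering, and by the explicit level description~\eqref{eq.procov} this covering is the base change along some projection $U\to U(i)$ of a covering $\{C_w\to U(i)\ |\ w\in W\}$ of a single object $U(i)$ of $\C$; each member $U\times_{U(i)}C_w\to U$ then factors through $E\to U$. Passing from this finite family to the associated covering morphism of $U(i)$ and applying the $\alpha$-smallness of $\C$, I obtain $V\to U(i)$ in $K(U(i))$ which refines it. Since base change along $U\to U(i)$ preserves this refinement, the morphism $U\times_{U(i)}V\to U$ lies in $K(U)$ and factors through $E\to U$, as required.

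The step I expect to be the main obstacle is this cofinality argument, and within it two points deserve care. The first is the reduction, via Proposition~\ref{prop.weaktop}, of an arbitrary weak covering of the pro-object $U$ to a covering of a single level $U(i)$ in $\C$; it is precisely the finiteness of the index set $W$ of a coherent covering that allows one to localize to one level, and only there can the hypothesis of $\alpha$-smallness be brought to bear. The second is the transition between the finite covering family $\{C_w\to U(i)\}$ and a single covering morphism to which Definition~\ref{def.small} applies: this is immediate when finite coproducts are available and stable under base change, which holds for the admissible sites that furnish the examples, and in that form the final base-change compatibility is routine.
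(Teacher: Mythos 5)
Your construction is exactly the paper's: choose a level representation $F\colon I\to\C$ with $\card I\le\alpha$, take for each level $F(i)$ a set $K_i$ as in Definition~\ref{def.small}, and put $K(F)=\{\,D\times_{F(i)}F\to F \mid i\in I,\ (D\to F(i))\in K_i\,\}$; the paper then simply asserts, citing Proposition~\ref{prop.weaktop}, that this set of cardinality at most $\alpha$ satisfies Definition~\ref{def.small} for the weak topology. So the approach is the same; what you add is the cofinality verification that the paper leaves implicit, and this is worth commenting on because it is exactly where the only real subtlety lives. To refine an arbitrary weak covering morphism $E\to U$ you pass, via Proposition~\ref{prop.weaktop}, to a finite distinguished family $\{U\times_{U(i)}C_w\to U\}_{w\in W}$ inside the sieve of $E$, and then you must convert the finite family $\{C_w\to U(i)\}$ into a single covering morphism of $U(i)$ before Definition~\ref{def.small} (which speaks only of covering \emph{morphisms}) can be applied. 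Your device --- replacing the family by $\coprod_{w}C_w\to U(i)$, refining it by some $V\in K(U(i))$, and using that coproducts are stable under pullback so that the factorizations $U\times_{U(i)}C_w\to E$ assemble into a map $U\times_{U(i)}\coprod_w C_w\to E$ --- requires finite coproducts with the properties of Definition~\ref{def.admiss} and Lemma~\ref{lem.coprodpro}, i.e.\ admissibility, whereas Proposition~\ref{prop.small} is stated for a merely coherent site $\C$. You flag this restriction honestly; note that the paper's own one-line proof is silent on the multi-element case altogether, and that in every application in the paper (Section~\ref{sec.weak}, Theorem~\ref{thm2}, Remark~\ref{rmk.weakco}) the site is admissible, so your argument covers the case that is actually used. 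In short: same construction and same key lemma as the paper, a more complete verification, and a correct proof under the admissibility hypothesis --- with the caveat, inherited from (and not resolved by) the paper's own terse proof, that the case of a coherent site without suitable finite coproducts is not literally covered.
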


\begin{proof}
Consider $F:{ I} \to \bf C$ in  $\Pro{\alpha}\bf C$ with $I$ a directed set with $\card I
\le \alpha$. For every $i\in  I$ let $K_i$ be
a set of covering morphisms of $F(i)$ in $\bf C$ as in Definition~\ref{def.small}. By
Proposition~\ref{prop.weaktop} the set of cardinality at most $\alpha$ of covering morphisms 
\[
 \{ D \times_{F(i)} F\to F \ | \  i\in I, (D\to F(i))\in K_i \}
\]
satisfies the condition of Definition~\ref{def.small} for the weak topology.
\end{proof}

In the next lemma we collect for later reference a few fact about coproducts in $\Pro{\alpha}(\C)$. 

\begin{lem}\label{lem.coprodpro} Assume $\C$ is an admissible site, see Definition~\ref{def.admiss}.
\begin{itemize}
\item[(i)] A strict initial object in $\C$ defines a strict initial object in
  $\Pro{\alpha}(\C)$.
\item[(ii)] $\Pro{\alpha}(\C)$ has finite coproducts which are disjoint and stable under
  pullback. Furthermore, finite coproducts of towers are towers. 
\item[(iii)] For a finite coproduct $U=\coprod_{i\in I} U_i$  in $\Pro{\alpha}(\C)$ the
  family 
$
\{ U_i \to U\ |\ i\in I\}
$
is a distinguished weak covering.
\item[(iv)] For a distinguished weak covering $\{U_i \to U \ |\ i\in I\}$ in
  $\Pro{\alpha}(\C)$  the morphism $\coprod_{i\in I} U_i \to U$ is a distinguished weak
  covering morphism.
\item[(v)] For a finite family of distinguished weak covering morphisms $V_i\to U_i$ in
  $\Pro{\alpha}(\C)$ ($i\in I$) the morphism $\coprod V_i\to \coprod U_i$ is a
  distinguished weak covering morphism.
\end{itemize}
\end{lem}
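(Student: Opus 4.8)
The plan is to reduce every assertion to the corresponding property of the admissible site $\C$, using three standard facts about $\Pro{\alpha}\C$: the mapping formula $\Mor_{\Pro{\alpha}\C}(F,G)=\lim_j\colim_i\Mor_\C(F(i),G(j))$, the fact that finite limits in $\Pro{\alpha}\C$ may be computed levelwise after passing to a common cofiltered index, and the level description \eqref{eq.procov} of distinguished weak coverings from Proposition~\ref{prop.weaktop}.

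For (i), regard the strict initial object $\emptyset$ of $\C$ as a constant pro-object. Since $\emptyset$ is initial in $\C$, the formula gives $\Mor_{\Pro{\alpha}\C}(\emptyset,G)=\lim_j\Mor_\C(\emptyset,G(j))$, a limit of singletons, hence a singleton, so $\emptyset$ is initial in $\Pro{\alpha}\C$. For strictness, suppose a morphism $G\to\emptyset$ is given; by the mapping formula it is represented by some $G(j_0)\to\emptyset$ in $\C$, which by strictness in $\C$ forces $G(j_0)\cong\emptyset$. As the index category is cofiltered, the objects $G(j')$ for $j'$ refining $j_0$ are again $\cong\emptyset$ and are cofinal, so $G\cong\emptyset$ as a pro-object and the given morphism is an isomorphism.

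For (ii), I would construct the coproduct of $F\colon I\to\C$ and $G\colon J\to\C$ as the pro-object $H\colon I\times J\to\C$ with $H(i,j)=F(i)\sqcup G(j)$; the product index is again a cofiltered $\alpha$-category. The universal property follows from the mapping formula together with the facts that filtered colimits commute with finite products and that limits commute with products, which identify $\Mor_{\Pro{\alpha}\C}(H,K)$ with $\Mor_{\Pro{\alpha}\C}(F,K)\times\Mor_{\Pro{\alpha}\C}(G,K)$. Disjointness and stability under pullback are then levelwise instances of the corresponding properties of $\C$, using that finite limits are computed levelwise. The clause that finite coproducts of towers are towers is the most delicate point: for a limit ordinal $\mu$ one must show $F_\mu\sqcup G_\mu\xrightarrow{\sim}\lim_{i<\mu}(F_i\sqcup G_i)$. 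I would set $L=\lim_{i<\mu}(F_i\sqcup G_i)$ and pull back the summand $F_{i_0}\hookrightarrow F_{i_0}\sqcup G_{i_0}$ along $L\to F_{i_0}\sqcup G_{i_0}$; stability and disjointness show this summand of $L$ is independent of $i_0$, and since limits commute with fibre products and $(F_i\sqcup G_i)\times_{F_{i_0}\sqcup G_{i_0}}F_{i_0}\cong F_i$, it is identified with $\lim_{i<\mu}F_i=F_\mu$; symmetrically for $G$, giving $L\cong F_\mu\sqcup G_\mu$.

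Finally, (iii)--(v) are obtained by choosing compatible level representations over a common cofiltered index and reducing to the admissible site $\C$: coproduct inclusions form coverings, a finite coproduct of covering morphisms is a covering morphism, and coproducts are stable under pullback, all in $\C$. Concretely, for (iii) one presents the family from the admissible covering $\{U_i(i_\circ)\to U(i_\circ)\}$ at a suitable level and recognizes it as a distinguished covering of the form \eqref{eq.procov}; for (iv) and (v) one likewise identifies $\coprod_i U_i(i_\circ)\to U(i_\circ)$, respectively $\coprod_i V_i(i_\circ)\to\coprod_i U_i(i_\circ)$, as a covering morphism in $\C$ and pulls it back. The main obstacle is the careful bookkeeping needed to normalize the various pro-objects onto a common index with a terminal level so that \eqref{eq.procov} applies, together with the commutation of finite coproducts with tower limits in (ii); both are controlled by the extensivity (disjoint, pullback-stable coproducts) assumed in Definition~\ref{def.admiss}.
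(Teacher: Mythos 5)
Your proposal is correct, and it in fact proves more than the paper writes down: the paper's own proof only treats (ii) and (iii), leaving (i), (iv) and (v) to the reader. For (iii)--(v) your strategy coincides with the paper's treatment of (iii): pass to common level representations over an index with a terminal object, use disjointness and pullback-stability of coproducts in $\C$ together with the strict initial object to identify $U_i(j)\cong U_i(j_\circ)\times_{U(j_\circ)}U(j)$, and thereby exhibit the relevant family or morphism as a pullback of a covering (morphism) in $\C$, i.e.\ as being of the form \eqref{eq.procov}. The genuine divergence is in (ii): the paper disposes of it in one line by citing \cite[Thm.\ 6.1]{Isa}, namely that finite coproducts commute with cofiltered $\alpha$-limits in $\Pro{\alpha}(\C)$; this single statement yields both the levelwise computation of coproducts and the tower clause, and it is also what implicitly justifies the paper's assertion in (iii) that $(\coprod_i U_i(j))_{j\in J}$ is a level representation of $U$. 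You instead build the coproduct by hand as $H(i,j)=F(i)\sqcup G(j)$ on the product index category, verify its universal property from the morphism formula plus commutation of filtered colimits with finite products, and prove the limit-ordinal step directly by splitting $L=\lim_{i<\mu}(F_i\sqcup G_i)$ along the pullbacks of the two summands of $F_{i_0}\sqcup G_{i_0}$, using extensivity to identify the pieces with $F_\mu$ and $G_\mu$. Your route is longer but self-contained, and it makes explicit exactly where the admissibility hypotheses of Definition~\ref{def.admiss} enter; the paper's route is shorter and records the stronger, reusable commutation statement. Both are sound.
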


\begin{proof}\mbox{}\\
{\em (ii): }
Use that finite coproducts commute with cofiltered $\alpha$-limits in $\Pro{\alpha}(\C)$ by
\cite[Thm.\ 6.1]{Isa}.\\
{\em (iii):}
Choose common level representations $(U_i(j))_{j\in J}$ of the $U_i$ ($i\in I$) such that
$J$ has the final element $j_\circ$. We know
that $(\coprod_i U_i(j))_{j\in J}$ is a level representation for $U$, which we fix.  As $U_i(j) \xrightarrow{\sim}
U_i(j_\circ )\times_{U(j_\circ)} U(j) $ is an isomorphism (use the strict initial object), we see that  $
\{ U_i \to U\ |\ i\in I\}
$ 
is the pullback of the covering $\{ U_i(j_\circ ) \to U(j_\circ ) \ | \ i\in I\}$ in $\C$.
\end{proof}

\subsection*{Transfinite topology}\label{sec.transtop}

The transfinite topology on $\Pro{\alpha} \bf C$ is the weakest topology such that the functor
${\bf C} \to \Pro{\alpha} \bf C$ is continuous and such that $\lambda$-transfinite
compositions of covering morphisms are covering morphisms ($\lambda\le \alpha$). The latter means that if
$(F_i)_{i<\lambda}$ is a tower in $\Pro{\alpha} \bf C$ with $ \lambda\le \alpha$ such that
$F_{i+1}\to F_i$ is a covering morphism for all $i+1<\lambda$ the morphism 
\[
F_{<\lambda}=\lim_{i<\lambda}F_i \to F_0
\]
is a covering morphism. 

The category $\Pro{\alpha} \bf C$ with the transfinite topology is
denoted $\langle \alpha \rangle {\bf C}$. In Proposition~\ref{prop.proadm} we show that $\langle \alpha \rangle {\bf C}$
is admissible if $\C$ is admissible. % Together with Corollary~\ref{cor.protrcomp} this
This will complete
the proof of the adjointness part of Theorem~\ref{thm1} in view of Proposition~\ref{prop.unipro}.

A key step is to give an explicit presentation of the transfinite topology for an
admissible site
 $\bf C$, see Definition~\ref{def.admiss}.
For this consider transfinite coverings in $\Pro{\alpha} \bf C$ of the following form. We call
a morphism $\tilde U \to U$ in $\Pro{\alpha} \bf C$ a distinguished transfinite covering morphism if it
is an $\lambda$-transfinite composition ($\lambda\le\alpha$) of distinguished weak covering morphisms. The families of the form
\eq{eq.stcov}{
\{ U_w\to \tilde U \to U \  | \ w\in W \}
} 
with $\tilde U\to U$ a distinguished transfinite covering morphism and $\{U_w \to \tilde U\ | \ w\in W\}$
a distinguished weak covering ($W$ finite) are transfinite coverings, called distinguished
transfinite coverings.

\begin{prop}\label{prop.sttop}
If $\bf C$ is an admissible site the transfinite topology on $\Pro{\alpha} \bf C$ is coherent
and has as a basis the distinguished transfinite coverings, i.e.\ the coverings of the form \eqref{eq.stcov}. 
\end{prop}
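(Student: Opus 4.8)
The plan is to establish that the distinguished transfinite coverings \eqref{eq.stcov} form a basis $\mathcal B$ for a topology on $\Pro{\alpha}\C$, to identify the topology they generate with the transfinite topology, and to read off coherence. Throughout I will use the pullback- and concatenation-constructions for towers from Section~\ref{sec.tow}, the facts about coproducts in Lemma~\ref{lem.coprodpro}, and that finite coproducts and fibre products commute with the cofiltered $\alpha$-limits computing transfinite compositions. I record at the outset the auxiliary statement, used repeatedly, that distinguished transfinite covering morphisms are stable under base change and are closed under finite coproducts and under (finite as well as transfinite) composition; each is proved by performing the relevant operation on the defining towers of distinguished weak coverings — base change via the pullback tower (Proposition~\ref{prop.weaktop} for the individual steps), coproducts via the $\coprod$-tower of a finite family (Lemma~\ref{lem.coprodpro}(ii),(v)), and composition via concatenation of towers — noting that an index ordinal $\lambda$ is admissible precisely when $\card\lambda\le\alpha$, so that the resulting towers stay in range.

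With these in hand the basis axioms are straightforward. Stability under isomorphism is clear. For base change along $V\to U$, the pullback tower of the morphism $\tilde U\to U$ has limit $\tilde U\times_U V$ with distinguished weak steps, so $\tilde U\times_U V\to V$ is distinguished transfinite, and $\{U_w\times_U V\to\tilde U\times_U V\}$ is the base change of the distinguished weak covering $\{U_w\to\tilde U\}$; hence the pulled-back family is again of the form \eqref{eq.stcov}. For transitivity, suppose $\{U_w\to\tilde U\to U\mid w\in W\}$ lies in $\mathcal B$ and for each $w$ we are given $\{U_{w,v}\to\tilde U_w\to U_w\mid v\in W_w\}$ in $\mathcal B$. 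By Lemma~\ref{lem.coprodpro}(iv) the map $\coprod_v U_{w,v}\to\tilde U_w$ is distinguished weak, so $\coprod_v U_{w,v}\to U_w$ is distinguished transfinite; forming the finite coproduct over $w$ and then composing with the distinguished transfinite covering morphism $\coprod_w U_w\to\tilde U\to U$ (again Lemma~\ref{lem.coprodpro}(iv)) exhibits $X:=\coprod_{w,v}U_{w,v}\to U$ as distinguished transfinite. Since $\{(w,v)\}$ is finite, $\{U_{w,v}\to X\}$ is a distinguished weak covering by Lemma~\ref{lem.coprodpro}(iii), so the composite family $\{U_{w,v}\to U\}$ has the form \eqref{eq.stcov} and lies in $\mathcal B$.

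Next I would identify the generated topology $T_{\mathcal B}$ with the transfinite topology $T$. Since the weak topology is contained in $T$, every distinguished weak covering morphism, and hence by the defining closure property of $T$ every transfinite composition of such, is a $T$-covering morphism; thus every member of $\mathcal B$ is a $T$-covering and $T_{\mathcal B}\subseteq T$. For the reverse inclusion it suffices, by minimality of $T$, to check that $T_{\mathcal B}$ makes $\C\to\Pro{\alpha}\C$ continuous — which holds because distinguished weak coverings lie in $\mathcal B$ (take $\tilde U=U$) — and that transfinite compositions of $T_{\mathcal B}$-covering morphisms are again $T_{\mathcal B}$-covering morphisms. Using coproducts as above one sees that $f\colon E\to D$ is a $T_{\mathcal B}$-covering morphism exactly when some distinguished transfinite covering morphism $X\to D$ factors through $f$. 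Given a tower $(F_i)_{i<\lambda}$ of $T_{\mathcal B}$-covering morphisms, I would build a parallel tower $(G_i)_{i<\lambda}$ with $G_0=F_0$, compatible maps $G_i\to F_i$, and each $G_{i+1}\to G_i$ distinguished transfinite, by pulling the dominating morphism of $F_{i+1}\to F_i$ back along $G_i\to F_i$ at successor steps and passing to limits at limit ordinals. The transfinite composition $G_{<\lambda}\to F_0$ is then distinguished transfinite by the closure statement and factors through $F_{<\lambda}\to F_0$, so the latter is a $T_{\mathcal B}$-covering morphism.

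Coherence is then immediate, as $\Pro{\alpha}\C$ has finite limits (Section~\ref{sec.tow}) and $\mathcal B$ is a generating family of finite coverings. I expect the main obstacle to be the verification of the transfinite-composition property for $T_{\mathcal B}$ in the previous paragraph: a $T_{\mathcal B}$-covering morphism is in general only dominated by a distinguished transfinite covering morphism rather than being one, so one must first promote the given tower to a genuine tower of distinguished transfinite covering morphisms before the closure statement applies. This, and the closure statements themselves, are where the bookkeeping with tower lengths — keeping the concatenated index ordinals of cardinality $\le\alpha$ — and with commuting coproducts and fibre products past transfinite limits must be carried out with care.
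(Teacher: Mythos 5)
Your proposal is correct and follows essentially the same route as the paper: you verify that the distinguished transfinite coverings form a basis using Lemma~\ref{lem.coprodpro} together with coproducts and concatenation of towers, and then show the generated topology is closed under transfinite compositions by dominating a given tower of covering morphisms with a (pullback) tower of distinguished transfinite covering morphisms, exactly the reduction the paper makes by reference to Proposition~\ref{prop.trsite}, followed by transfinite concatenation. The only differences are cosmetic: in the transitivity step you package the composite covering through $\coprod_{w,v}U_{w,v}$ where the paper uses $\coprod_{w}\tilde U_{w}$, and you make explicit the cardinality bookkeeping for concatenated index ordinals that the paper leaves implicit.
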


\begin{proof}
First we show that the coverings \eqref{eq.stcov} form a basis $\mathcal B$ for a
topology.
The only nontrivial part is to check that the composition of coverings in $\mathcal B$ is in
$\mathcal B$.

Let
\[
\{ U_w\to \tilde U \to U \  | \ w\in W \}
\]
be in $\mathcal B$ and for all $w\in W$ let
\[
\{ U_{w,v}\to \tilde U_w \to U_w \  | \ v\in W_w \}
\]
be in $\mathcal B$. 

The morphism $\coprod_{w\in W} U_w \to \tilde U$ is a distinguished weak covering morphism
by Lemma~\ref{lem.coprodpro}(iv).
In  $\Pro{\alpha}(\C)$ finite coproducts  of towers  are towers and finite coproducts of distinguished weak covering morphisms are
distinguished weak covering morphisms by Lemma~\ref{lem.coprodpro}(ii) and (v).  So by concatenation of towers we get that the composition
\[
\coprod_{w\in W} \tilde U_w \to \coprod_{w\in W} U_w \to \tilde U \to U
\]
is a distinguished transfinite covering morphism. As \[\{ U_{w,v} \to \coprod_{w'\in W} \tilde U_{w'} \ | \
w\in W, v\in W_w  \}\]
is a distinguished weak covering we have shown that 
\[
\{ U_{w,v} \to U\ |\ w\in W, v\in W_w   \}
\]
is in $\mathcal B$.

In order to finish the proof of Proposition~\ref{prop.sttop} we have to show that
$\lambda$-transfinite compositions of covering morphisms with respect to the topology
defined by $\mathcal B$ are covering morphisms in the same topology ($\lambda\le\alpha$). By an argument very
similar to the proof of Proposition~\ref{prop.trsite} one is 
 reduced to showing that for a tower $(U_i)_{i<\lambda}$ of distinguished
transfinite covering morphisms the composition $U_{<\lambda}\to U_0$ is a distinguished
transfinite covering morphism. By assumption for any $i+1<\lambda$ we can find a tower
$(U_{i,j})_{j< \lambda_i}$ of distinguished weak covering morphisms such that $U_{i,0}=U_i$
and $\lim_{j<\lambda_i} U_{i,j}=U_{i+1}$. By transfinite concatenation of the towers $(U_{i,j})_{j\in \lambda_i}$  we get a
tower $(U'_k)_{k<\lambda'}  $ of distinguished weak covering morphisms indexed by the ordinal $\lambda'= \sum_{i<\lambda}\lambda_i $ 
with $U'_0=U_0$ and $U'_{<\lambda'}=U_{<\lambda}$. So $U_{<\lambda}\to U_0$ is a
distinguished transfinite covering morphism.
\end{proof}

We say that an object $U$ of a site is weakly contractible if any
covering morphism $V\to U$ of the site splits. Clearly, if the site is subcanonical this
is equivalent to saying that the sheaf $\y (U)$ is weakly contractible in the associated topos in the sense of Definition~\ref{def.weakcon}.

\begin{cor}\label{prop.weaktrans}
Let $\C$ be an admissible site and let $U \in \Pro{\alpha} \C$ be weakly contractible for
the weak topology. Then $U$ is also weakly contractible for the transfinite topology.
\end{cor}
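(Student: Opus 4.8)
The plan is to run the transfinite induction of Claim~\ref{cl.split} at the level of the site and then to reduce an arbitrary transfinite covering morphism to the basis furnished by Proposition~\ref{prop.sttop}.

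The main step is to show that every distinguished transfinite covering morphism $\tilde U\to U$ splits. Writing $\tilde U \to U$ as the transfinite composition $U_{<\lambda}\to U_0$ of a $\lambda$-tower $(U_i)_{i<\lambda}$ ($\lambda\le\alpha$, $U_0=U$, $U_{<\lambda}=\tilde U$) with each $U_{i+1}\to U_i$ a distinguished weak covering morphism, I would construct by transfinite induction a compatible family of sections $s_i:U_0\to U_i$ of the projections $U_i\to U_0$, starting from $s_0=\mathrm{id}$; here compatible means that $U_i\to U_j$ precomposed with $s_i$ equals $s_j$ for $j<i$. At a successor stage I pull back the distinguished weak covering morphism $U_\mu\to U_{\mu-1}$ along $s_{\mu-1}$: since distinguished weak covering morphisms are base changes of coverings of $\C$ they are stable under further base change, so $U_0\times_{U_{\mu-1}}U_\mu\to U_0$ is again a distinguished weak covering morphism, and it splits because $U=U_0$ is weakly contractible for the weak topology; this yields $s_\mu$ lying over $s_{\mu-1}$. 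At a limit stage I use the tower isomorphism $U_\mu\cong\lim_{i<\mu}U_i$ and set $s_\mu=\lim_{i<\mu}s_i$. The resulting cone $(s_i)_{i<\lambda}$ induces a section $U_0\to\lim_{i<\lambda}U_i=\tilde U$ of $\tilde U\to U_0$.

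It then remains to split an arbitrary transfinite covering morphism $V\to U$. Its generated sieve is covering, hence contains a distinguished transfinite covering $\{U_w\to\tilde U\to U\mid w\in W\}$ with $W$ finite, so each composite $U_w\to U$ factors through $V$. By Lemma~\ref{lem.coprodpro}(iv) the map $\coprod_{w\in W}U_w\to\tilde U$ is a distinguished weak covering morphism, and composing it with the distinguished transfinite covering morphism $\tilde U\to U$ gives, by the closure under composition established in Proposition~\ref{prop.sttop}, a distinguished transfinite covering morphism $\coprod_{w}U_w\to U$. By the first step this splits; since finite coproducts exist in $\Pro{\alpha}\C$ and each $U_w\to U$ factors through $V$, the induced map $\coprod_wU_w\to V$ over $U$ turns the splitting into a section $U\to V$ of $V\to U$. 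Thus $U$ is weakly contractible for the transfinite topology.

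The point requiring the most care is the successor step of the induction: one must check that the pullback of a distinguished weak covering morphism is again distinguished—so that weak contractibility of $U$ applies verbatim at every stage—and that the sections produced at successor and limit ordinals stay mutually compatible, which is precisely the bookkeeping already carried out in the proof of Claim~\ref{cl.split}.
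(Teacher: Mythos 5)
Your proof is correct and follows essentially the same route as the paper: the paper likewise reduces to splitting a distinguished transfinite covering morphism $\lim_{i<\lambda}U_i\to U_0$ and constructs the splitting successively as compatible sections $U\to U_i$ (pulling back along the section already built and using weak contractibility at successor stages, taking limits at limit stages), exactly as in your transfinite induction. Your second paragraph simply supplies the reduction from arbitrary transfinite covering morphisms to distinguished ones---via Proposition~\ref{prop.sttop} and Lemma~\ref{lem.coprodpro}(iv)---which the paper asserts without detail when it says it ``is sufficient'' to split distinguished transfinite covering morphisms.
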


The proof of Corollary~\ref{prop.weaktrans} is very similar to the proof of
Proposition~\ref{prop.weakco}, so we omit the details. It is sufficent to show that a
distinguished transfinite covering morphism $\lim_{i<\lambda} U_i \to U$ splits. Here
$U_{i+1}\to U_i$ are distinguised weak covering morphisms for all $i$. Such a splitting can
be constructed successively over $i$ as splittings $U\to U_i$.

\section{Pro-covering morphisms}

\noindent
Let $\bf C$ be an admissible site and $\alpha$ an infinite cardinal.  In this section we
collect a few results which are related to the concept of pro-covering morphism.

\begin{defn}
A morphism $f:V\to U$ in  $\Pro{\alpha} \bf C$ is a pro-covering morphism if
$f$ has a level representation by covering morphisms in $\bf C$.
\end{defn}

\begin{lem}\label{lem.procov}
A distinguished covering morphism in the weak and in the transfinite topology is a pro-covering morphism.
\end{lem}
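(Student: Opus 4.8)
The plan is to treat the weak and the transfinite case separately, reducing the second to a transfinite induction built on the first.

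For the weak case I would use the explicit level presentation \eqref{eq.procov}. Up to isomorphism a distinguished weak covering morphism has the form $\coprod_{w\in W} F_w \to F$, where $F:I\to\C$ has a final object $i_\circ$, $W$ is finite, and $F_w(i)=F(i)\times_{F(i_\circ)} C_w$ for a covering $\{C_w\to F(i_\circ)\mid w\in W\}$ of $\C$ (Lemma~\ref{lem.coprodpro}(iv)). Since coproducts in $\C$ are stable under pullback, at each level $i$ the morphism $\coprod_w F_w(i)\to F(i)$ is the base change of $\coprod_w C_w\to F(i_\circ)$ along $F(i)\to F(i_\circ)$, hence a covering morphism of $\C$. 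This is already a level representation by covering morphisms, so the weak case is immediate.

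For the transfinite case I would first invoke Proposition~\ref{prop.sttop}: a distinguished transfinite covering morphism $\tilde U\to U$ is the transfinite composition $U_{<\lambda}\to U_0$ of a $\lambda$-tower $(U_i)_{i<\lambda}$ ($\lambda\le\alpha$) all of whose successor steps $U_{i+1}\to U_i$ are distinguished weak covering morphisms. I would then construct, by transfinite recursion on $k\le\lambda$, a level representation of the pro-morphism $U_k\to U_0$ by covering morphisms of $\C$, compatibly with the tower. Fix a level representation $(U_0(b))_{b\in B}$ of $U_0$ by a cofinite directed set $B$; the invariant at stage $k$ is a cofinite directed index $\mathcal A_k$ of cardinality $\le\alpha$ with a cofinal functor $q_k:\mathcal A_k\to B$, a functor $G_k:\mathcal A_k\to\C$ representing $U_k$, and a natural transformation $G_k\Rightarrow U_0\circ q_k$ representing $U_k\to U_0$ whose components are covering morphisms of $\C$. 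At a successor $k=j+1$ I would apply the weak case: $U_{j+1}\to U_j$ is the pullback of a covering $V\to W$ of $\C$ along a map $U_j\to W$ which, $W$ being constant, is defined at a single level $a_0\in\mathcal A_j$; restricting to the cofinal subset of $\mathcal A_j$ on which $U_j\to W$ is defined and setting $G_{j+1}(a)=G_j(a)\times_W V$ gives covering components $G_{j+1}(a)\to G_j(a)$, and composing with the inductively given covering $G_j(a)\to U_0(q_j(a))$ yields again a covering morphism, since a composite of two covering morphisms is a covering morphism by the transitivity axiom. At a limit $k=\mu$ I would represent $U_\mu=\lim_{i<\mu}U_i$ by the pro-limit of the tower $(\mathcal A_i,G_i)_{i<\mu}$, that is, by the Grothendieck-type total index whose objects are pairs $(i,a)$ with $i<\mu$ and $a\in\mathcal A_i$ and whose structure functor sends $(i,a)\mapsto G_i(a)$; the components of the map to $U_0$ are then simply inherited from the earlier stages and hence remain covering morphisms of $\C$. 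Taking $k=\lambda$ produces the desired level representation of $U_{<\lambda}\to U_0$.

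The feature that makes the recursion succeed is that every object of the total index sits over a single stage $i<\lambda$, so its structure map to $U_0$ is the component attached to $U_i\to U_0$ and does not depend on the full transfinite depth; unwinding the recursion shows each such component is in fact a finite composite of coverings, the well-foundedness of $\lambda$ being used implicitly through the transfinite induction. The main obstacle is the limit step: I must verify that the total index is a cofiltered $\alpha$-category, that it genuinely represents $\lim_{i<\mu}U_i$ in $\Pro{\alpha}\C$ (existence of such limits and their level representations I would take from \cite[Thm.\ 4.1]{Isa} and \cite[Prop.\ I.8.1.6]{SGA4}), and that the transition functors $\mathcal A_i\to\mathcal A_j$ and the maps $q_i$ produced in the successor steps are mutually compatible, so that all components land coherently in the fixed representation $(U_0(b))_{b\in B}$. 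Cardinalities cause no difficulty, as a union of $\lambda\le\alpha$ indices each of size $\le\alpha$ again has size $\le\alpha$.
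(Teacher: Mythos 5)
Your proof is correct and follows essentially the same route as the paper: the weak case is read off from the level description \eqref{eq.procov}, and the transfinite case is a transfinite induction whose successor step composes a distinguished weak covering morphism with the inductively given pro-covering morphism (using transitivity of coverings) and whose limit step passes to a cofiltered limit of pro-covering morphisms. The only real difference is presentational: where you construct the limit-step level representation by hand via the total index (and observe that each component sits at a single earlier stage, hence is a finite composite of coverings), the paper phrases the induction as a minimal-counterexample argument and disposes of the limit case in one line by citing \cite[Cor.\ 5.2]{Isa}, which is exactly the statement your total-index construction reproves.
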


\begin{proof}
The case of the weak topology is trivial by the description \eqref{eq.procov} of
distinguished weak covering morphisms.

Let $(U_i)_{i<\lambda}$ ($\lambda\le \alpha$) be a tower of distinguished weak covering
morphisms. We want to show that $U_{<\lambda}\to U_0$ is a pro-covering morphism. Without
loss of generality $\lambda $ is not a limit ordinal. We argue by contradiction. If the
composition is not a pro-covering morphism there exists a smallest ordinal $\mu<\lambda$
such that $U_{\mu}\to U_0$ is not a pro-covering morphism. 

If $\mu$ is a successor ordinal
$U_\mu\to U_{\mu-1}\to U_0$ is a composition of a distinguished weak covering morphism and a
pro-covering morphism, so it is a pro-covering morphism, contradiction. 

If $\mu$ is a limit ordinal \[U_\mu\xrightarrow{\sim} \lim_{i<\mu} U_i \to U_0  \]
is a cofiltered limit of pro-covering morphisms, so is a pro-covering morphism by
\cite[Cor.\ 5.2]{Isa}, contradiction.
\end{proof}

\begin{prop}\label{prop.proadm}
For $\bf C$ admissible the site $\Pro{\alpha} {\bf C }$ with the weak and the transfinite
topology is admissible.
\end{prop}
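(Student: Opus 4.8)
The plan is to verify each clause of Definition~\ref{def.admiss} for $\Pro{\alpha}\mathbf{C}$ equipped with each of the two topologies, reducing everything to facts already collected about coproducts in the pro-category and about the two topologies. Since the transfinite topology is finer than the weak topology but has the same distinguished \emph{weak} coverings as building blocks, most structural facts need to be checked only once and then transported. First I would record that both topologies are subcanonical: the weak topology is subcanonical because $\mathbf{C}\to\Pro{\alpha}\mathbf{C}$ is continuous and fully faithful with $\mathbf{C}$ subcanonical, and representable presheaves on $\Pro{\alpha}\mathbf{C}$ are limits of representables from $\mathbf{C}$, so they remain sheaves; for the transfinite topology one must additionally check the sheaf condition against distinguished transfinite covering morphisms, i.e.\ that a representable functor sends a transfinite composition of distinguished weak coverings to an equalizer diagram, which follows from the fact (implicit in Lemma~\ref{lem.procov} and the analysis in Section~\ref{sec.tow}) that $\y(U)=\colim$ along the tower and that the sheaf axiom passes to transfinite limits of coverings.

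Next I would address the coproduct axioms, which are exactly the content of Lemma~\ref{lem.coprodpro}. Part~(i) gives a strict initial object in $\Pro{\alpha}(\mathbf{C})$, and part~(ii) gives that finite coproducts exist and are disjoint and stable under pullback; these are intrinsic to the category $\Pro{\alpha}\mathbf{C}$ and hence hold regardless of which topology we put on it. The remaining admissibility requirement is that for a finite coproduct $U=\coprod_{i\in I}U_i$ the family $\{U_i\to U\mid i\in I\}$ is a covering. For the weak topology this is precisely Lemma~\ref{lem.coprodpro}(iii). For the transfinite topology it is immediate, since every distinguished weak covering is \emph{a fortiori} a covering in the finer transfinite topology (the transfinite topology contains the weak topology by construction, being the weakest topology that is at least as fine as the weak topology and closed under transfinite composition).

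The last thing to confirm is that each topology is coherent, i.e.\ generated by finite coverings, but this is exactly what Propositions~\ref{prop.weaktop} and \ref{prop.sttop} establish: the weak topology has as a basis the distinguished weak coverings \eqref{eq.procov}, which are finite, and the transfinite topology has as a basis the distinguished transfinite coverings \eqref{eq.stcov}, which are again indexed by finite sets $W$. So in both cases the required basis of finite coverings is already in hand. Assembling these observations, I would simply run through the list in Definition~\ref{def.admiss}—subcanonical, strict initial object, finite coproducts that are disjoint, pullback-stable, and covering—and cite the corresponding earlier result for each, treating the weak and transfinite cases in parallel and using finess of the transfinite topology to deduce its coproduct-covering clause from the weak case.

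The main obstacle I anticipate is the subcanonicity of the \emph{transfinite} topology, rather than any of the coproduct bookkeeping. Subcanonicity for the weak topology is formal, but for the transfinite topology one must check that representable presheaves satisfy descent along a distinguished transfinite covering morphism $\lim_{i<\lambda}U_i\to U$, and this requires commuting a hom-functor out of $U$ past a transfinite limit of coverings. The clean way to do this is to argue by transfinite induction on $\lambda$ using that $\y(\lim_{i<\lambda}U_i)\cong\colim_{i<\lambda}\y(U_i)$ fails in general but that the sheaf condition is nonetheless inherited at limit stages because a limit of effective-epimorphic (covering) families remains covering in the transfinite topology by its very definition; at successor stages one reduces to the weak case. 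I would isolate this as the one genuinely inductive point and handle the rest by citation.
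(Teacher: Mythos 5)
Your overall decomposition of the problem matches the paper's: coherence of both topologies comes from Propositions~\ref{prop.weaktop} and~\ref{prop.sttop}, the coproduct axioms (strict initial object, disjoint pullback-stable finite coproducts, coproduct inclusions forming a covering) come from Lemma~\ref{lem.coprodpro}, and the covering clause for the transfinite topology follows because it refines the weak one. The gap is exactly at the point you yourself flag as the crux: subcanonicity of the transfinite topology. Your proposed transfinite induction is circular at limit ordinals. You justify the limit stage ``because a limit of effective-epimorphic (covering) families remains covering in the transfinite topology by its very definition,'' but the fact that a family is a covering in the topology is precisely what \emph{imposes} a descent condition on presheaves; it cannot be used to \emph{verify} that representable presheaves satisfy that condition. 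Likewise, your parenthetical reading of Lemma~\ref{lem.procov} as asserting ``$\y(U)=\colim$ along the tower'' is not what that lemma says, and no identity of that shape is available to run the induction.

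The device you are missing is the notion of \emph{pro-covering morphism}: a morphism in $\Pro{\alpha}\C$ admitting a level representation $(V_i\to U_i)_{i\in I}$ by covering morphisms of $\C$. The paper proves subcanonicity in two steps that together handle both topologies at once. First, Lemma~\ref{lem.procov}: every distinguished covering morphism, weak \emph{or} transfinite, is a pro-covering morphism; the limit-ordinal case there is handled not by any topological statement but by a purely categorical fact about pro-categories, namely that a cofiltered limit of pro-covering morphisms is again a pro-covering morphism (level representations can be rearranged, \cite[Cor.\ 5.2]{Isa}). Second, Lemma~\ref{lem.procovcan}: representables satisfy descent along \emph{any} pro-covering morphism $f:V\to U$, proved by reducing to $W\in\C$, writing the equalizer levelwise using subcanonicity of $\C$, and commuting the filtered colimits that define $\Hom$-sets in $\Pro{\alpha}\C$ past the finite limit. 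Note that even your weak-topology argument silently needs this second step (continuity and full faithfulness of $\C\to\Pro{\alpha}\C$ do not by themselves give descent of $\Hom(-,W)$ along a distinguished weak covering of a pro-object); there the fix is routine, but for the transfinite topology the reduction to level representations via pro-covering morphisms is the essential idea, and without it your induction does not close.
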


By what is shown in Section~\ref{sec.transtop} the site $\Pro{\alpha} {\bf C }$ with both topologies is
coherent. 
The site $\Pro{\alpha} {\bf C }$ is subcanonical by Lemmas~\ref{lem.procov} and~\ref{lem.procovcan}.
It has finite coproducts with the requested properties by Lemma~\ref{lem.coprodpro}.

\begin{lem}\label{lem.procovcan}
For a pro-covering morphism $f:V\to U$  and for an object $W$ in $\Pro{\alpha} {\bf C }$
\[
\Hom(U,W) \to \Hom(V,W) \rightrightarrows \Hom(V\times_U V ,W)
\]
is an equalizer. 
\end{lem}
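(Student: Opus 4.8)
The plan is to reduce the descent condition to the ordinary sheaf condition in $\C$, one level at a time, and then to commute a filtered colimit past an equalizer. First I would reduce to the case where $W$ lies in $\C$. Writing $W=(W(k))_{k\in K}$ over a cofiltered $\alpha$-category $K$, the morphism formula for $\Pro{\alpha}\C$ gives $\Hom(-,W)=\lim_{k\in K}\Hom(-,W(k))$, where each $W(k)$ is viewed as a constant pro-object. Thus the three-term diagram for $W$ is the limit over $k\in K$ of the three-term diagrams for the $W(k)$; since an equalizer is a finite limit and limits commute with limits, it is enough to treat each $W(k)$, i.e.\ we may assume $W\in\C$.

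Next I would fix a level representation of the pro-covering morphism. By definition $f:V\to U$ admits a level representation $(f_i\colon V(i)\to U(i))_{i\in I}$ over a cofiltered $\alpha$-category $I$ (which we may take to be a cofinite directed set with $\card I\le\alpha$) in which every $f_i$ is a covering morphism of $\C$. Since finite limits in $\Pro{\alpha}\C$ are computed levelwise for a diagram presented at a common level, the fiber product is represented by $V\times_U V=(V(i)\times_{U(i)}V(i))_{i\in I}$.

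Finally, for $W\in\C$ the morphism formula specializes to $\Hom(F,W)=\colim_{i\in I}\Hom_\C(F(i),W)$, a filtered colimit because $I$ is cofiltered. For each fixed $i$ the morphism $f_i$ is a covering morphism and $\C$ is subcanonical, so $\y(W)$ satisfies descent along $f_i$; that is,
\[
\Hom_\C(U(i),W)\to\Hom_\C(V(i),W)\rightrightarrows\Hom_\C(V(i)\times_{U(i)}V(i),W)
\]
is an equalizer of sets. I would then pass to the filtered colimit over $i\in I$ and invoke the exactness of filtered colimits of sets (they commute with finite limits, in particular with equalizers) to obtain precisely the equalizer
\[
\Hom(U,W)\to\Hom(V,W)\rightrightarrows\Hom(V\times_U V,W).
\]
The steps requiring care are the levelwise computation of $V\times_U V$ and the observation that the colimit in the morphism formula is filtered; the real engine of the argument is the interchange of filtered colimit and equalizer, which is exactly where both the pro-covering hypothesis on $f$ (giving a covering morphism at each level) and the subcanonicity of $\C$ are used.
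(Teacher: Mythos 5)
Your proposal is correct and follows essentially the same route as the paper's proof: reduce to $W\in\C$, choose a level representation $(f_i\colon V_i\to U_i)_{i\in I}$ by covering morphisms, obtain a levelwise equalizer from subcanonicity, and conclude by commuting the filtered colimit with the equalizer (finite limit) in $\mathbf{Set}$. The only difference is that you spell out two steps the paper leaves implicit, namely the justification of the reduction to $W\in\C$ via the morphism formula and the levelwise computation of $V\times_U V$.
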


\begin{proof}
Without loss of generality we can assume $W\in \bf C$.
Let $(V_i \xrightarrow{ f_i}  U_i)_{i\in I}$ be a level representation of $f$ with $f_i$ a
covering morphism in $\bf C$. For each $i\in I$ we get an equalizer
\eq{eq.eqfori}{
\Hom(U_i,W) \to \Hom(V_i,W) \rightrightarrows \Hom(V_i\times_{U_i} V_i ,W).
}
Taking the colimit over $i\in I$ in \eqref{eq.eqfori} and using the fact that in the
category of sets filtered colimits commute with finite limits \cite[IX.2]{ML} we finish
the proof of Lemma~\ref{lem.procovcan}.
\end{proof}

Let $\pi:\C \to \Pro{\alpha}(\C)$ be the canonical functor.

\begin{lem}\label{lem.pullpro}
For a sheaf $K$ on $\bf C$ the sheaf $\pi^* K$ on $\Pro{\alpha} {\bf C }$ with the weak or
transfinite topology is given on $U=(U_i)_{i\in I}$ by
\[
\pi^* K (U) = \colim_{i\in I} K(U_i).
\]
\end{lem}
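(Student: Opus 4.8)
The plan is to identify $\pi^*$ as the sheafification of a presheaf left Kan extension, to compute that Kan extension explicitly by a cofinality argument, and then to check that the resulting presheaf is already a sheaf, so that the sheafification has no effect.

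Since $\pi$ is continuous, the induced geometric morphism has direct image $\pi_*$ equal to restriction along $\pi$, and its left adjoint is $\pi^*=a\circ\pi_p$, where $\pi_p\colon\Psh(\C)\to\Psh(\Pro{\alpha}\C)$ is the left adjoint of restriction (the pointwise left Kan extension along $\pi^{\op}$) and $a$ denotes sheafification \cite[VII.10]{MLM}. Evaluating the Kan extension at $U=(U_i)_{i\in I}$ expresses $(\pi_p K)(U)$ as a colimit of $K(C)$ over the comma category of pairs $(C,\,U\to\pi C)$. I would show that the tautological diagram $i\mapsto(U_i,\,\mathrm{pr}_i)$, given by the structure projections of the pro-object, is cofinal in this comma category: by the formula $\Mor_{\Pro{\alpha}\C}(U,\pi C)=\colim_{i\in I}\Mor_\C(U_i,C)$ every morphism $U\to\pi C$ factors through some projection $U\to\pi U_i$, and two such factorizations agree after passing to a common refinement of the index. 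Hence
\[
(\pi_p K)(U)=\colim_{i\in I} K(U_i)=:P(U),
\]
a filtered colimit over the cofiltered index of $U$.

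It remains to prove that $P$ is a sheaf for the weak, respectively the transfinite, topology, for then $\pi^*K=a(P)=P$. By Propositions~\ref{prop.weaktop} and~\ref{prop.sttop} it suffices to verify the equalizer condition on the given basis of coverings, and the one mechanism used throughout is that filtered colimits commute with finite limits in $\mathbf{Set}$ \cite[IX.2]{ML}. For a distinguished weak covering $\{F_w\to F\}_{w\in W}$ as in \eqref{eq.procov}, with $F_w(i)=F(i)\times_{F(i_\circ)}C_w$ for a finite covering $\{C_w\to F(i_\circ)\}$ in $\C$, the family $\{F_w(i)\to F(i)\}_w$ is the pullback of that covering and hence covers $F(i)$ in $\C$ for every $i$, while the fiber products $F_w\times_F F_{w'}$ are levelwise. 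The sheaf property of $K$ then gives an equalizer
\[
K(F(i))\xrightarrow{\ \sim\ }\operatorname{eq}\Bigl(\textstyle\prod_{w}K(F_w(i))\rightrightarrows\prod_{w,w'}K\bigl((F_w\times_F F_{w'})(i)\bigr)\Bigr)
\]
at each level $i$, and passing to the filtered colimit over $i$ yields exactly the sheaf condition for $P$ on this covering.

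For the transfinite topology the basis coverings \eqref{eq.stcov} have the form $\{U_w\to\tilde U\to U\}$, a distinguished weak covering $\{U_w\to\tilde U\}$ composed with a distinguished transfinite covering morphism $\tilde U\to U$. By Lemma~\ref{lem.procov} the morphism $\tilde U\to U$ is a pro-covering morphism, so it admits a level representation $(\tilde U_i\to U_i)_{i\in I}$ by covering morphisms of $\C$. Choosing, as in the proof of Proposition~\ref{prop.weaktop}, a single level representation carrying $U$, $\tilde U$, the $U_w$ and their pairwise fiber products simultaneously, the composite family becomes at each level $i$ a composite of coverings in $\C$, hence a covering of $U_i$ by the transitivity of the topology on $\C$. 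Applying the sheaf property of $K$ level by level and commuting the filtered colimit over $i$ past the finite limit gives the equalizer condition for $P$ on \eqref{eq.stcov}. I expect this last point to be the main obstacle: one must arrange a common level representation in which $U$, $\tilde U$, the finitely many $U_w$ and all the relevant fiber products are simultaneously levelwise, so that the levelwise sheaf condition for $K$ can be combined with the exactness of filtered colimits; once such a representation is fixed, the conclusion is forced.
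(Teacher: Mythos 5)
Your proposal is correct and is essentially the paper's own proof: both identify $\pi^*K$ with the colimit presheaf $U\mapsto\colim_{i}K(U_i)$ via cofinality of the tautological objects $(U_i,\,U\to\pi U_i)$ in the comma category (the paper cites \cite[Prop.\ I.5.1]{SGA4} for the presheaf-pullback formula) and then show that this presheaf is already a sheaf using level representations of coverings together with the commutation of filtered colimits with finite limits in $\mathbf{Set}$. The only organizational difference is that the paper verifies the \v{C}ech equalizer once, for an arbitrary pro-covering morphism $W\to U$, and then invokes Lemma~\ref{lem.procov} so that the weak and transfinite topologies are handled uniformly, which sidesteps the per-basis common-level-representation bookkeeping that you single out as the main obstacle in the transfinite case.
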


\begin{proof}
For any sheaf $L$ on ${\bf C }$ consider the presheaf
\[
L^\sharp:U=(U_i)_{i\in I}\mapsto \colim_{i\in I} L(U_i) 
\] 
 on $\Pro{\alpha} {\bf C }$.
For a pro-covering morphism $W\to U$ 
\[
L^\sharp (U) \to L^\sharp (W) \to L^\sharp (W\times_U W)
\]
is an equalizer, because in $\bf Set$ finite limits commute with filtered colimits. So by Lemma~\ref{lem.procov} it follows that $L^\sharp$ is a sheaf. 

By \cite[Prop.\ I.5.1]{SGA4} the presheaf pullback of $K$ to a
presheaf on  $\Pro{\alpha} {\bf C }$ is given by 
\eq{eq.fupullpro}{
U\mapsto \colim_{(V,f_V)\in U/\pi} K(V)
}
where $U/\pi$ is the comma category whose objects consist of $V\in \C$ and a
morphism $U\to \pi (V)$ in  $\Pro{\alpha} {\bf C }$. As the objects $(U_i,U\to U_i)$ are
cofinal in this comma category we see that the presheaf \eqref{eq.fupullpro} coincides
with the sheaf $K^\sharp$, which finishes the proof by~\cite[Prop.\ III.1.3]{SGA4}.
\end{proof}

\medskip

We conclude this section with an application of the notion of pro-covering morphism to
derived categories.
For a commutative unital ring $\Lambda$ let $D^+_\Lambda( {\bf C})$ be the derived
category  of bounded below complexes of $\Lambda$-modules. The following proposition is a
variant of \cite[Cor.\ 5.1.6]{BS}.

\begin{prop}\label{prop.compa} For both the weak and the transfinite topology and
for $K\in D^+_\Lambda( {\bf C})$ the natural transformation \[K \to {\bf R}\pi_* \pi^*
K  \]
is an equivalence.
\end{prop}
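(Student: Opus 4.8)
The plan is to show that the natural map $K \to \mathbf{R}\pi_* \pi^* K$ is an equivalence by checking it after evaluating on a suitable family of objects and comparing stalks, reducing everything to a statement about cohomology of pro-objects indexed by cofiltered systems. Since $\pi^*$ is given by the explicit colimit formula of Lemma~\ref{lem.pullpro}, the essential content is a comparison of sheaf cohomology on $\C$ with sheaf cohomology on $\Pro{\alpha}\C$ along pro-objects. The key reduction is that it suffices to prove the statement on cohomology sheaves, i.e.\ to show that for each object $U=(U_i)_{i\in I}$ of $\Pro{\alpha}\C$ the natural map
\[
\colim_{i\in I} H^n(U_i, K) \to H^n(U, \pi^* K)
\]
is an isomorphism for all $n$. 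By the projection formula $\mathbf{R}\pi_* \pi^* K$ computes the right-hand side, while the left-hand side is the value of the presheaf cohomology colimit; so the whole proposition amounts to the claim that sheaf cohomology on $\Pro{\alpha}\C$ of a pullback sheaf is computed as the filtered colimit of the cohomologies along a level representation.

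First I would reduce to the case of a single bounded-below complex concentrated in one degree using the hypercohomology spectral sequence, and then further to an injective (or flasque) resolution, so that it is enough to understand $\pi_*$ of a sheaf and its derived functors. The main computation is then to establish that for a sheaf $L$ on $\C$, the pullback $\pi^* L = L^\sharp$ of Lemma~\ref{lem.pullpro} has the property that its cohomology over $U=(U_i)_{i\in I}$ agrees with $\colim_i H^n(U_i, L)$. For this I would use that a cover of $U$ in either the weak or transfinite topology can, by the basis descriptions in Proposition~\ref{prop.weaktop} and Proposition~\ref{prop.sttop} together with Lemma~\ref{lem.procov}, be refined by a pro-covering morphism, i.e.\ one admitting a level representation by honest covers in $\C$. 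The \v{C}ech-to-derived-functor machinery, combined with the fact that finite limits commute with filtered colimits in $\mathbf{Set}$ (as exploited in Lemma~\ref{lem.procovcan} and Lemma~\ref{lem.pullpro}), then lets me pass the colimit over $i$ through the cohomology computation.

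The crucial point, and the main obstacle, is that cohomology does not commute with filtered colimits of \emph{sheaves} in general, only with colimits of \emph{sites/objects} under suitable coherence hypotheses; here the hypothesis that $\C$ is coherent (so all the $U_i$ are quasi-compact and quasi-separated objects) is exactly what is needed to invoke the classical commutation of cohomology with cofiltered limits of coherent objects, analogous to \cite[Exp.\ VI]{SGA4} or the limit theorems used in \cite{BS}. Concretely, the boundedness below of $K$ is essential: it guarantees convergence of the relevant spectral sequences and allows the inductive comparison degree by degree, which is precisely why the statement fails for unbounded complexes and motivates the transfinite enlargement in the first place. The transfinite topology introduces extra covers beyond the weak ones, so I must check that these new covers do not create new cohomology; but by Lemma~\ref{lem.procov} every distinguished transfinite covering morphism is again a pro-covering morphism, so the \v{C}ech complexes computing cohomology in the two topologies agree on pro-objects, and the comparison for the weak topology transports verbatim to the transfinite topology.
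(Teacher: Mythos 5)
Your proposal is correct and follows essentially the same route as the paper's proof: reduce to an injective sheaf, use the explicit colimit description of $\pi^*$ from Lemma~\ref{lem.pullpro}, and exploit that distinguished coverings in both the weak and transfinite topologies are pro-covering morphisms (Lemma~\ref{lem.procov}), so that the \v{C}ech-to-derived-functor criterion of \cite[Prop.\ V.4.3]{SGA4} reduces everything to filtered colimits of \v{C}ech cohomologies of coverings in $\C$, which vanish in positive degrees by injectivity. The only blemish is the stray appeal to ``the projection formula'' and ``stalks'' in your opening paragraph --- neither is used nor needed; the correct bookkeeping is simply that $R^n\pi_*$ of a sheaf is the sheafification of $C \mapsto H^n(\pi(C), -)$, so your colimit isomorphism evaluated on constant pro-objects already yields the proposition.
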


\begin{proof}
One easily reduces to the case of an injective sheaf $K $ in $\Sh_\Lambda ({\bf C})$.
As $K\xrightarrow{\sim} \pi_* \pi^* K$ is an isomorphism by Lemma~\ref{lem.pullpro}, we have to show that \[H^j(U_{\Pro{\alpha}\bf
  C},\pi^* K)=0 \quad \text{ for all } j>0 \text{ and } U \text{ in } \Pro{\alpha}\bf C \]
in the weak and in the transfinite topology, because this implies that 
$\pi_* \pi^* K \to
{\bf R} \pi_*\pi^* K$ is a quasi-isomorphism by \cite[Prop.\ V.5.1]{SGA4}.
By~\cite[Prop.\ V.4.3]{SGA4} it suffices to show that \v{C}ech cohomology 
\[
\check{H}^j( U_{\Pro{\alpha}\bf C},K) = \colim_{f\in {\bf
    Cov}_{ \Pro{\alpha}\bf C}(U)} \check{H}^j(f,K) 
\]
vanishes for $j>0$. Here ${\bf Cov}_{ \Pro{\alpha}\bf C}(U)$ is the category of distinguished covering
morphisms of $U$ in the weak resp.\ transfinite topology.  For simplicity of notation we do not distinguish between $K$ and
$\pi^* K$. As the distinguished covering morphisms are pro-covering morphisms by Lemma~\ref{lem.procov}, 
 $f\in {\bf Cov}_{ \Pro{\alpha}\bf C}(U)$ has a level
  representation of the form $(V_i \xrightarrow{f_i} U_i)_{i\in I}$ with covering
  morphisms $f_i$ in $\bf C$.
Again by~\cite[Prop.\ V.4.3]{SGA4} and using injectivity of $K$ as a sheaf on $\bf C$ we obtain the vanishing of 
\[
 \check{H}^j(f,K)  = \colim_{i\in I}  \check{H}^j(f_i,K) = 0 \quad \text{ for } j>0.
\]
\end{proof}

\section{Weakly contractible objects}\label{sec.weak}

\subsection*{Proof of existence}

\noindent
In this subsection we prove Theorem~\ref{thm2}. Consider the topos $\E=\Sh (\C )$, where $\C$
is an admissible site. Let $\beta$ be an infinite cardinal such that $\C$ is
$\beta$-small, see Definition~\ref{def.small}. The site $\Pro{\alpha}\C$ with the weak
topology is $\alpha$-small for $\alpha\ge\beta$ by Proposition~\ref{prop.small}. We are
going to show that under this condition for
any object $U$ in $\Pro{\alpha}\C$ there is a transfinite covering morphism $\mathcal P^\infty(U)\to U$ such
that $\mathcal P^\infty(U)$ is weakly contractible in the weak topology. Then by
Corollary~\ref{prop.weaktrans} $\mathcal P^\infty(U)$ is also weakly contractible in the transfinite
topology and this clearly implies that $\langle \alpha\rangle \E =\Sh(\langle \alpha
\rangle \C)$ has a generating set of coherent weakly contractible objects.

So consider $\alpha\ge  \beta$. Choose for each $U$ in $\Pro{\alpha}\C$ a set of
cardinality at most $\alpha$ of generating
 covering morphisms $K(U)$ as in Definition~\ref{def.small}.
Let $\mathcal P (U)$ be the product $\prod_{(V\to U) \in K(U)} (V\to U)$ in the comma category
$\Pro{\alpha}\C / U$.

\begin{claim}
For each $U$ in $\Pro{\alpha}\C$
 the morphism $\mathcal P (U) \to U$ is a transfinite covering morphism in $\Pro{\alpha}\C$.
\end{claim}
\begin{proof}
Let $\lambda\le \alpha$ be an ordinal such that there is a bijection $$\iota:\{ i<\lambda \ |\ i \text{ is successor
  ordinal }\} \xrightarrow{\sim} K(U).$$
We successively construct a tower of weak covering morphisms $(V_i)_{i<\lambda}$ with
$V_0=U$ and $V_{<\lambda}=\mathcal P (U)$. Assume $V_j$ has already been defined for all
$j<i$. If $i<\lambda$ is a successor ordinal set \[V_i=V_{i-1} \times_U \iota(i).\] If
$i<\lambda$ is a limit ordinal set \[ V_i = \lim_{j<i} V_j. \]
\end{proof}

For a positive integer $i$ let $\mathcal P^i (U)$ be the $i$-fold application of $\mathcal
P$, i.e.\ \[\mathcal P^i(U) = \mathcal P ( \mathcal P^{i-1}(U) ) \quad\text{ for } i>1,\]
and let $\mathcal P^\infty(U)$ be $\lim_{i\in \mathbb N} \mathcal P^i(U)$. By
concatenation of towers we see that $\mathcal P^\infty(U)\to U$ is a $\lambda$-transfinite
compositon of weak covering morphisms ($\lambda\le \alpha$).

\begin{claim} The object
$\mathcal P^\infty(U)$ of $\Pro{\alpha}\C$ is weakly contractible in the weak topology.
\end{claim}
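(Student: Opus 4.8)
The plan is to verify the defining property directly: I will show that every covering morphism $V\to P$ in the weak topology splits, where $P=\mathcal P^\infty(U)=\lim_{n\in\N}\mathcal P^n(U)$, with projections $\rho_n:P\to\mathcal P^n(U)$ and transition morphisms $p_{n+1}:\mathcal P^{n+1}(U)\to\mathcal P^n(U)$. First I would reduce to a \emph{distinguished} weak covering morphism. If $V\to P$ is any covering morphism, the sieve it generates is a covering sieve, so by Proposition~\ref{prop.weaktop} it contains a distinguished weak covering $\{P_w\to P\ |\ w\in W\}$, and each $P_w\to P$ factors through $V$. By Lemma~\ref{lem.coprodpro}(iv) the coproduct $\coprod_{w\in W}P_w\to P$ is then a distinguished weak covering morphism factoring through $V\to P$, so any splitting of it yields a splitting of $V\to P$. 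Hence it suffices to split a single distinguished weak covering morphism of $P$.

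The argument then rests on two observations. The first is a splitting property built into the definition of $\mathcal P$: for \emph{any} covering morphism $W\to X$ in $\Pro{\alpha}\C$ the base change $W\times_X\mathcal P(X)\to\mathcal P(X)$ splits. Indeed, by $\alpha$-smallness of the weak topology (Proposition~\ref{prop.small}, Definition~\ref{def.small}) there is a generating covering $(V'\to X)\in K(X)$ and a factorization $V'\to W\to X$; the product projection $\mathcal P(X)\to V'$ in the comma category $\Pro{\alpha}\C/X$ composes to a morphism $\mathcal P(X)\to W$ over $X$, and the universal property of the fibre product converts this into a section of $W\times_X\mathcal P(X)\to\mathcal P(X)$. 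The second observation is a finite-level descent: every distinguished weak covering morphism $V\to P$ is the base change along some $\rho_n$ of a distinguished weak covering morphism $V_n\to\mathcal P^n(U)$. This holds because, by \eqref{eq.procov} and Lemma~\ref{lem.coprodpro}, such a $V\to P$ is the pullback along a morphism $g:P\to\pi(B)$ (with $B$ in $\C$) of a finite covering $\{C_w\to B\}$ in $\C$; since the morphism formula for $\Pro{\alpha}\C$ expresses $\Mor(P,\pi(B))$ as a filtered colimit over a level representation of the cofiltered limit $P=\lim_n\mathcal P^n(U)$, the map $g$ factors as $g_n\circ\rho_n$ for some $n$, whence $V=\rho_n^*V_n$ for the corresponding $V_n$ over $\mathcal P^n(U)$.

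Granting these, the splitting is immediate. Applying the first observation with $X=\mathcal P^n(U)$ and $W=V_n$, and using $\mathcal P^{n+1}(U)=\mathcal P(\mathcal P^n(U))$, gives a section of $V_n\times_{\mathcal P^n(U)}\mathcal P^{n+1}(U)\to\mathcal P^{n+1}(U)$. Base changing this section along $\rho_{n+1}:P\to\mathcal P^{n+1}(U)$ and using $\rho_n=p_{n+1}\circ\rho_{n+1}$ to identify $V_n\times_{\mathcal P^n(U)}\mathcal P^{n+1}(U)\times_{\mathcal P^{n+1}(U)}P$ with $V_n\times_{\mathcal P^n(U)}P=V$ produces the required section of $V\to P$. (By Corollary~\ref{prop.weaktrans} weak contractibility for the weak topology then also gives it for the transfinite topology, as used in the surrounding argument.) The step I expect to be the main obstacle is the finite-level descent of the second observation: one must argue carefully, from the description \eqref{eq.procov} and the behaviour of morphisms into $\pi(B)$ out of the cofiltered limit $\lim_n\mathcal P^n(U)$, that a covering of the transfinite limit already comes from a finite stage $\mathcal P^n(U)$; once this is in place the remainder is formal manipulation of fibre products and sections.
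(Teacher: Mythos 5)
Your proof is correct and is essentially the paper's own argument: descend the distinguished weak covering morphism to a finite stage $V_n\to\mathcal P^n(U)$, then use the product structure of $\mathcal P$ (via $\alpha$-smallness) to factor the canonical morphism $\mathcal P^{n+1}(U)\to\mathcal P^n(U)$ through $V_n$, which base-changes along $\rho_{n+1}$ to the desired section over $\mathcal P^\infty(U)$. The only difference is one of exposition: you make explicit the two steps the paper leaves implicit or merely asserts, namely the reduction from arbitrary covering morphisms to distinguished ones (via the basis and coproducts) and the finite-level descent of a distinguished covering of the limit $\lim_n\mathcal P^n(U)$.
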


\begin{proof}
Let $V'\to  \mathcal P^\infty(U)$ be a distinguished weak covering morphism. There
exists a positive integer $i$ and a distinguished weak covering morphism $V\to \mathcal P^i(U)$ such
that \[V'\cong V\times_{\mathcal P^i(U)} \mathcal P^\infty (U).\]
By the definition of $\mathcal P$ there is a factorization \[\mathcal P^{i+1}(U) \to V  \to
\mathcal P^i(U)\] of the canonical morphism $\mathcal P^{i+1}(U) \to \mathcal P^{i}(U)$,
which induces a splitting of $V'\to \mathcal P^{\infty}(U)$.
\end{proof}

\subsection*{Disjoint covering topology}

\begin{defn}\label{def.dctop}
We call a small category $\D$ a dc-category if finite coproducts exist in $\D$ and
furthermore finite coproducts are disjoint and stable under pullback, see \cite[App.]{MLM}.
The finite coverings of the form $\{ V_i \to V\ |\ i\in I\}$ with $V=\coprod_{i\in I} V_i$ define a basis for a topology
on $\D$,
which we call the dc-topology.
\end{defn}

\begin{lem}\label{prop.dcpull} Let $\C$ be an admissible site.
\begin{itemize}
\item[(i)] The full subcategory of weakly contractible objects $\tilde \C$ in $\C$ forms a
  dc-category and the functor $\tilde \C \to \C$ is continuous, see \cite[Def.\ III.1.1]{SGA4}.
\item[(ii)]
If there are enough weakly contractible objects in $\C$, i.e.\ if for any object $U$ in
$\C$ there is a covering morphism $V\to U$ with $V\in \C $ weakly contractible
the restriction of
 sheaves induces an equivalence of categories between $\Sh(  \C) $ and $\Sh (\tilde \C)$.
 Here $\tilde \C$ has the dc-topology.
\end{itemize}
\end{lem}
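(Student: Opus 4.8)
The plan is to establish Lemma~\ref{prop.dcpull} in two steps, matching its two parts, and the main obstacle will be verifying the sheaf condition on $\tilde\C$ and comparing it correctly to that on $\C$ in part (ii).

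For part (i), I would first check that $\tilde\C$ is a dc-category. The key point is that a finite coproduct of weakly contractible objects is again weakly contractible; this follows formally, since if $V=\coprod_i V_i$ with each $V_i$ weakly contractible and $W\to V$ is a covering morphism, then pulling back along each inclusion $V_i\to V$ (using stability of coproducts under pullback in $\C$) gives covering morphisms $W\times_V V_i\to V_i$ which split by weak contractibility of $V_i$, and these splittings assemble via the universal property of the coproduct to a splitting of $W\to V$. Disjointness and pullback-stability of finite coproducts in $\tilde\C$ are inherited from $\C$, since $\tilde\C$ is a full subcategory closed under the relevant coproducts. That $\tilde\C\to\C$ is continuous then amounts to checking that a dc-covering $\{V_i\to V\}$ in $\tilde\C$ is a covering in $\C$, which holds because $\C$ is admissible (Definition~\ref{def.admiss}) and hence the canonical family $\{V_i\to\coprod V_i\}$ is a covering there.

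For part (ii), I would use the hypothesis that $\C$ has enough weakly contractible objects to show that $\tilde\C$ is a \emph{dense} subsite and then invoke the comparison lemma for sites \cite[Thm.\ III.4.1]{MLM}. The essential verification is twofold: first, that the restriction functor sends sheaves on $\C$ to sheaves on $\tilde\C$ (this uses continuity from part (i) plus the fact that dc-coverings are coverings in $\C$); second, that every covering sieve of an object $U\in\C$ is refined by a morphism from a weakly contractible object, which is exactly the ``enough weakly contractible objects'' hypothesis. The hard part will be confirming that the dc-topology on $\tilde\C$ is precisely the topology \emph{induced} from $\C$ under the comparison lemma, rather than a strictly coarser one: I must show that any covering morphism $V\to U$ in $\C$ with $U,V\in\tilde\C$ is refined by a dc-covering, equivalently that weakly contractible covers of weakly contractible objects can be split off as coproduct components. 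Here I would use that $V\to U$ splits by weak contractibility of $U$, realizing $U$ as a retract, and then use disjointness of coproducts to recognize the relevant decomposition.

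Once these compatibilities are in place, the comparison lemma yields that restriction $\Sh(\C)\to\Sh(\tilde\C)$ is an equivalence, with quasi-inverse given by right Kan extension along $\tilde\C\hookrightarrow\C$ followed by sheafification. I would note that the quasi-compactness and coherence built into the admissibility of $\C$ guarantee the relevant finiteness needed for the basis of the dc-topology to generate the same topos, so no set-theoretic difficulties arise. The whole argument is formal once the density of $\tilde\C$ and the identification of the two topologies are secured, so the bulk of the write-up would concentrate on the refinement statement in the previous paragraph.
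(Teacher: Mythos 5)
Your proposal is correct and takes essentially the same route as the paper: the paper's entire written proof is a one-line appeal to the comparison lemma for part (ii) (cited there as \cite[App., Cor.\ 4.3]{MLM} — it is in the Appendix, not a Thm.\ III.4.1), and your verifications of density (from ``enough weakly contractibles'') and of the agreement of the dc-topology with the induced topology are precisely the details that citation leaves implicit, while part (i) is left entirely to the reader in the paper and your direct verification is the intended one. The only step you justify too quickly is pullback-stability of coproducts in $\tilde \C$ in part (i): it is not purely ``inherited,'' since one must check that a coproduct summand of a weakly contractible object is again weakly contractible, but this follows from exactly the splitting-plus-disjointness argument you already deploy for the topology identification in part (ii).
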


To show Lemma~\ref{prop.dcpull}(ii) one uses the comparison lemma \cite[App., Cor. 4.3]{MLM}.

For a ring $\Lambda$ and a topos $\E$ let $\Mod_\Lambda(\E)$ be the category  of
$\Lambda$-modules in $\E$.

\begin{lem}\label{lem.exprop}
Let $\E$ be a topos and let $U$ be a weakly contractible object in $\E$.
The additive functor  from $\Mod_\Lambda(\E)$ to $\Lambda$-modules
\[
F \mapsto \Gamma(U,F)=\Mor_\E (U,F)
\]
is  exact.
\end{lem}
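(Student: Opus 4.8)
The plan is to deduce exactness from two ingredients: left exactness, which is formal, and the preservation of epimorphisms, which is precisely where the weak contractibility of $U$ is used. First I would record that $\Gamma(U,-)$ is left exact. Indeed, the forgetful functor $\Mod_\Lambda(\E)\to\E$ creates limits, so finite limits of $\Lambda$-modules are computed on the underlying objects of $\E$, and the representable functor $\Mor_\E(U,-)$ on $\E$ preserves all limits; hence the composite $F\mapsto\Gamma(U,F)=\Mor_\E(U,F)$ preserves finite limits. For an additive functor between abelian categories it then suffices to check that it carries epimorphisms to surjections, since left exactness together with this will turn any short exact sequence $0\to A\to B\to C\to 0$ into an exact sequence $0\to\Gamma(U,A)\to\Gamma(U,B)\to\Gamma(U,C)\to 0$.

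So let $\phi\colon F\to G$ be an epimorphism in $\Mod_\Lambda(\E)$. The one nonformal point I would isolate is that the underlying morphism of $\phi$ in $\E$ is again an epimorphism: for $\Lambda$-module objects in a topos the forgetful functor to $\E$ preserves epimorphisms, because in both categories an epimorphism is exactly a locally surjective map. Granting this, fix a section $s\in\Gamma(U,G)=\Mor_\E(U,G)$ and form the pullback square in $\E$ over the underlying objects
\[
\xymatrix{
U\times_G F \ar[r] \ar[d]_{p} & F \ar[d]^{\phi}\\
U \ar[r]^{s} & G
}
\]
Since epimorphisms in a topos are stable under base change, the projection $p\colon U\times_G F\to U$ is an epimorphism.

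Now weak contractibility of $U$ (Definition~\ref{def.weakcon}) gives a splitting $t\colon U\to U\times_G F$ with $p\circ t=\mathrm{id}_U$. Composing $t$ with the top horizontal projection yields a morphism $\tilde s\colon U\to F$, i.e.\ an element $\tilde s\in\Gamma(U,F)$, and the commutativity of the square gives $\phi\circ\tilde s=s\circ p\circ t=s$. Thus every section of $G$ over $U$ lifts to $F$, so $\Gamma(U,\phi)$ is surjective, and combined with left exactness this proves $\Gamma(U,-)$ exact. This splitting step is exactly the mechanism already used in Claim~\ref{cl.split} and Proposition~\ref{prop.weakco}.

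I expect the only real obstacle to be the identification of epimorphisms of $\Lambda$-modules with underlying epimorphisms in $\E$; once that standard fact is in place the pullback-and-split argument is entirely formal. It is worth noting that no $\Lambda$-linearity of the lift $\tilde s$ is needed, since $\Gamma(U,F)$ depends only on the underlying object of $F$ in $\E$, while the $\Lambda$-module structure on $\Gamma(U,F)$ and the $\Lambda$-linearity of $\Gamma(U,\phi)$ are inherited pointwise from $F$ and $\phi$.
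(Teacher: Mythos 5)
Your proof is correct. The paper states Lemma~\ref{lem.exprop} without proof, treating it as the standard fact that sections over a weakly contractible (i.e.\ projective) object are exact (cf.\ \cite[Prop.\ 3.2.1 ff.]{BS}); your argument --- left exactness for formal reasons, identification of epimorphisms of $\Lambda$-modules with epimorphisms of underlying objects in $\E$, then pullback of a section along the epimorphism and splitting by weak contractibility --- is precisely the intended one, using the same splitting mechanism as Proposition~\ref{prop.weakco} and Claim~\ref{cl.split}.
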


\section{Example: Cartan--Eilenberg hypercohomology}\label{sec.hyper}

\noindent 
Let $C$ and $D$ be admissible sites.
Let $f: {\bf D}\to {\bf C}$ be a continuous functor preserving finite limits. For a commutative unital
ring $\Lambda$ let $\Mod_\Lambda({\bf C})$ be the category of sheaves of $\Lambda$-modules
on $\bf C$ and let $D_\Lambda({\bf C})$ be its derived category.

In geometry one is often interested in studying the right derived functor ${\bf R}f_* :
D_\Lambda({\bf C})\to D_\Lambda({\bf D})$. It was shown by Joyal and Spaltenstein \cite{Sp} that this right derived functor
always exists abstractly, see for example \cite{CD} for a modern account. However, it has
good `geometric' properties only for complexes bounded below or under some condition of
finite cohomological dimension. These problematic aspects of the right derived functor are
discussed in the framework of homotopy theory in \cite[Sec.\ 6.5.4]{Lu}. 

As an alternative to the derived functor one can use the
older concept of Cartan--Eilenberg hypercohomology  pushforward 
 \[\mathbb H f_*:
D_\Lambda({\bf C})\to D_\Lambda({\bf D}) \] 
defined for a complex $K^*$ as 
$\mathbb H f_* (K^*)= f_*( {\rm Tot} I^{* * }) $, where $K^* \to I^{* *}$ is a
Cartan--Eilenberg injective resolution \cite[Sec.\ XVII.1]{CE} and where $({\rm Tot} I^{* * })_n= \prod_{i+j=n}
I^{i, j}$. In this form Cartan--Eilenberg hypercohomology is studied in \cite[App.]{We}. In fact, in \cite{CE} the direct sum instead of the direct product
is used, but this does not seem to be appropriate for cohomology. 
Cartan--Eilenberg
hypercohomology is equivalent to hypercohomology calculated using the Godement resolution, see \cite[App.]{WG}. 

For admissible sites we can give a universal characterization of Cartan--Eilenberg
hypercohomology in terms of  derived functors. Let \[\pi_\alpha^{\bf C}: \C\to  \langle
\alpha \rangle {\bf C} \]
be the canonical functor. We denote the induced functor $   \langle
\alpha \rangle {\bf D}\to  \langle
\alpha \rangle {\bf C} $ by $f^\alpha$. 

\begin{prop}\label{prop.carei} For coherent sites $\bf C$ and $\bf D$
and an infinite cardinal $\alpha$ the diagram
\[
\xymatrix{
D_\Lambda( \langle \alpha \rangle {\bf C} ) \ar[r]^-{{\bf R} f^\alpha_*} &  D_\Lambda( \langle
\alpha \rangle {\bf D} ) \ar[d]^{{\bf R}(\pi_\alpha^{{\bf D}})_*  }   \\
 D_\Lambda( {\bf C }) \ar[u]^{(\pi_\alpha^{{\bf C}})^*  }  \ar[r]_{\mathbb H f_*} &  D_\Lambda( \bf D)
}
\]
commutes up to canonical equivalence.
\end{prop}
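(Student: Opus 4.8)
The plan is to identify both composites from $D_\Lambda({\bf C})$ to $D_\Lambda({\bf D})$ by reducing to the case of an injective resolution and then recognizing the Cartan--Eilenberg total complex as the computation of $\mathbf R f^\alpha_*$ on the transfinite topos. The starting observation is that both paths around the square begin by applying $(\pi_\alpha^{\bf C})^*$, which by Proposition~\ref{prop.compa} is fully faithful and preserves cohomology of bounded below complexes, and whose right adjoint $\mathbf R(\pi_\alpha^{\bf C})_*$ recovers the original complex. So the content is to compare, for $K^*\in D_\Lambda({\bf C})$, the object $\mathbf R(\pi_\alpha^{\bf D})_* \mathbf R f^\alpha_* (\pi_\alpha^{\bf C})^* K^*$ with $\mathbb H f_*(K^*)$.

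First I would fix a Cartan--Eilenberg injective resolution $K^* \to I^{**}$ in $\Mod_\Lambda({\bf C})$, so that $\mathbb H f_*(K^*) = f_*(\operatorname{Tot} I^{**})$ with $(\operatorname{Tot} I^{**})_n = \prod_{i+j=n} I^{i,j}$. The key geometric input is that on the transfinite topos $\langle\alpha\rangle{\bf D}$, which for $\alpha$ large has enough weakly contractible objects by Theorem~\ref{thm2}, the pushforward $\mathbf R f^\alpha_*$ of an unbounded complex is computed \emph{without} the pathologies of the classical unbounded derived functor: because $\langle\alpha\rangle{\bf D}$ is transfinite (Corollary~\ref{cor.main}), products are exact along transfinite towers and the derived pushforward commutes with the relevant countable products. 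Concretely, I would apply $(\pi_\alpha^{\bf C})^*$ to each injective $I^{i,j}$ and show, using the exactness of $\Gamma(U,-)$ for weakly contractible $U$ from Lemma~\ref{lem.exprop}, that the resulting objects $(\pi_\alpha^{\bf C})^* I^{i,j}$ compute $\mathbf R f^\alpha_*$ termwise and that the totalization by products is preserved. Then $\mathbf R(\pi_\alpha^{\bf D})_*$ applied to $\operatorname{Tot}$ of these images returns $f_*(\operatorname{Tot} I^{**})$ by another application of Proposition~\ref{prop.compa} to each bounded-below column.

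The main obstacle I expect is precisely the interchange of the totalization product $\prod_{i+j=n}$ with the derived functor $\mathbf R f^\alpha_*$: classically this interchange fails for unbounded complexes, and this failure is the entire motivation for passing to the enlarged topos. The crux of the argument must therefore be to verify that in $\langle\alpha\rangle{\bf D}$ the functor $\mathbf R f^\alpha_*$ does commute with the countable products appearing in $\operatorname{Tot}$, and that a product of the relevant acyclic objects remains acyclic for $(\pi_\alpha^{\bf D})_*$. This is where transfiniteness is used essentially: a countable product can be rewritten as a transfinite limit of partial products, each successive map being a split (hence epimorphic) projection onto a weakly contractible factor, so acyclicity is preserved in the limit. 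Once this commutation is established the diagram commutes by a direct chase through the two resolutions, and the canonical equivalence is the one induced by the adjunction units and counits of $\pi_\alpha$.
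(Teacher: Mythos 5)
Your proposal has the right overall shape (reduce to a Cartan--Eilenberg resolution and use exactness of countable products in the enlarged topos to control the totalization), but it contains two genuine gaps. First, you invoke Theorem~\ref{thm2}, Corollary~\ref{cor.main} and Lemma~\ref{lem.exprop}, i.e.\ enough weakly contractible objects, which exist only for $\alpha\ge\beta$ large; the proposition is asserted for \emph{every} infinite cardinal $\alpha$. The input actually needed is weaker and always available: $\langle\alpha\rangle{\bf C}$ is an $\alpha$-transfinite site, so its topos is replete ($\aleph_0$-transfinite) for any infinite $\alpha$, and hence countable products are exact in $\Mod_\Lambda(\langle\alpha\rangle{\bf C})$ by \cite[Prop.\ 3.1.9]{BS}. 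Relatedly, the mechanism you propose for this exactness is incorrect as stated: the partial-product projections are not split, and weak contractibility of the factors is irrelevant (it concerns epimorphisms \emph{onto} an object, not sections of projections). The correct argument writes a countable product of epimorphisms as the transfinite composition of a tower of epimorphisms and applies repleteness; no contractibility enters.

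Second, you transport the resolution in the inconvenient direction. The functor $(\pi_\alpha^{\bf C})^*$ is a left adjoint: it preserves neither injectivity nor the infinite products $\prod_{i+j=n}$ in ${\rm Tot}$, so $(\pi_\alpha^{\bf C})^*\,{\rm Tot}\, I^{**}$ and ${\rm Tot}\,(\pi_\alpha^{\bf C})^* I^{**}$ differ in general, and you are left having to prove (your self-declared ``crux'') that ${\rm Tot}$ of a double complex of merely acyclic, non-injective sheaves still computes ${\bf R}f^\alpha_*$ and ${\bf R}(\pi^{\bf D}_\alpha)_*$; your sketch does not establish this. The paper's proof goes the other way around: take a Cartan--Eilenberg \emph{injective} resolution $(\pi_\alpha^{\bf C})^* K\to J^{**}$ upstairs on $\langle\alpha\rangle{\bf C}$ and push it down. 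Since $(\pi_\alpha^{\bf C})_*$ is right adjoint to the exact functor $(\pi_\alpha^{\bf C})^*$, it preserves injectives and products, and by Proposition~\ref{prop.compa} it carries $J^{**}$ to a Cartan--Eilenberg injective resolution of $K$ on $\bf C$. Then, using the commutation $(\pi^{\bf D}_\alpha)_*\circ f^\alpha_*=f_*\circ(\pi^{\bf C}_\alpha)_*$, Weibel's theorem \cite[Thm.\ A.3]{We} --- whose proof needs only exact countable products, supplied by \cite[Prop.\ 3.1.9]{BS} --- identifies ${\bf R}\bigl((\pi^{\bf D}_\alpha)_*\circ f^\alpha_*\bigr)\bigl((\pi^{\bf C}_\alpha)^*K\bigr)$ with $f_*\,{\rm Tot}\,\bigl((\pi^{\bf C}_\alpha)_* J^{**}\bigr)=\mathbb H f_*(K)$. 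If you insist on pulling the resolution up instead, you would at minimum have to prove an analogue of Weibel's theorem for non-injective acyclic resolutions, which is precisely the work the paper's route avoids.
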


\begin{proof}
It is sufficient to show that for a complex $K$ of sheaves of $\Lambda$-modules on $\bf C$   there is
a quasi-isomorphism 
\[
{\bf R}(\pi_\alpha^{{\bf C}})_*    (\pi_\alpha^{{\bf C}})^*  K \simeq {\rm Tot} I^{* * }  ,
\]
where $K^* \to I^{* * }$ is a Cartan--Eilenberg injective resolution as above. 

\begin{claim}
The functor $(\pi_\alpha^{{\bf C}})_*$ maps a Cartan--Eilenberg injective resolution of $
(\pi_\alpha^{\bf C})^* K$ on $\langle
\alpha \rangle {\bf C}$ to a  Cartan--Eilenberg injective resolutions of $K$.
\end{claim}

\begin{proof}[Proof of claim]
Note that $(\pi_\alpha^{{\bf C}})_*$ preserves injective sheaves and products because it
is a right adjoint of the exact functor $(\pi_\alpha^{{\bf C}})^*$.
One easily reduces the proof of the claim to the case in which $K$ is in $\Mod_\Lambda(\C)$. Let $(\pi_\alpha^{\bf C})^* K\to I^*$ be an injective resolution 
of $K$. Then by Proposition~\ref{prop.compa}  the pushforward $(\pi_\alpha^{\bf
  C})_* I $ is an injective resolution of $K\simeq (\pi_\alpha^{\bf C})_*
(\pi_\alpha^{\bf C})^* K$, so the claim follows. 
\end{proof}

 Using the claim Proposition~\ref{prop.carei} follows immediately from \cite[Thm.\ A.3]{We}.
 Here we use that countable products are exact in $\Mod_\Lambda(\langle \alpha \rangle {\bf C}) $, see
 \cite[Prop.\ 3.1.9]{BS}, which is sufficient in the proof of \cite[Thm.\ A.3]{We}.
\end{proof}

\section{Example: transfinite Zariski topos}\label{sec.zar}

\noindent
In this section we explain how the construction of Section~\ref{sec.pros}  applied to the Zariski topos of an affine
scheme $X=\Spec (R)$ relates to the method of Bhatt--Scholze \cite[Sec.\ 2]{BS}. The
comparison in the \'etale case is very similar.
We fix an infinite cardinal $\alpha$ with $\alpha \ge \card (R)$

\subsection*{The category $ {\bf Aff}_X$}
Let $ {\bf Aff}_X$ be the category of affine schemes $\Spec (A)$ over $X$ with $\card(A)\le\alpha$. The Zariski topology on $
{\bf Aff}_X$ has a basis given by coverings
\[
\{ \Spec(A[\frac{1}{f_i}])\to \Spec (A)\ |\ i=1,\ldots ,n \}
\]
where $f_1,\ldots , f_n\in A$ generate the unit ideal in $A$.

\subsection*{ The site  $ {\bf Aff}^{\rm Zar}_X$}
  Let  $ {\bf Aff}^{\rm Zar}_X$
be the full subcategory of $ {\bf Aff}_X$ whose objects are of the form $\Spec \prod_{i=1}^n R[1/f_i]$ with
$f_1,\ldots ,f_n\in R$ 
and whose morphisms are scheme morphisms over $X$. We endow $ {\bf Aff}^{\rm Zar}_X$ with
the Zariski topology.
In \cite[Sec. 2.2]{BS} the objects of $ {\bf Aff}^{\rm Zar}_X$ are called Zariski localizations of $R$.

Clearly, the associated topos $\Sh( {\bf Aff}^{\rm Zar}_X)$ is equivalent to the usual
Zariski topos of $X$. Moreover,   $ {\bf Aff}^{\rm Zar}_X$ is admissible, see
Definition~\ref{def.admiss}.

\subsection*{The category  ${\bf Aff}^{\rm pro}_X$}  
The functor 
\eq{eq.lim}{
\lim : \Pro{\alpha}{\bf Aff}^{\rm Zar}_X \to {\bf Aff}_X
}
which maps a pro-system to its inverse limit is fully faithful. For any affine scheme
$\Spec A$ in the image of the functor \eqref{eq.lim} Bhatt--Scholze say that $A$ is an ind-Zariski
localizations of $\Spec R$. We write the 
 image of the functor \eqref{eq.lim} as $ {\bf Aff}_X^{\rm pro}$.

\subsection*{Topologies on   ${\bf Aff}^{\rm pro}_X $  }

The topology on ${\bf Aff}^{\rm pro}_X$ induced by the Zariski topology on ${\bf Aff}_X$ is isomorphic to the weak topology on
$\Pro{\alpha}{\bf Aff}^{\rm Zar}_X$ via the equivalence induced by \eqref{eq.lim}.

In \cite{BS} and \cite[Tag 0965]{StP} the pro-\'etale topology is studied. There is an obvious analog in
the Zariski word, the pro-Zariski topology, defined as follows:
\[
\{ U_i \xrightarrow{\pi_i} U\ | \ i\in I \}
\]
is a pro-Zariski covering if $I$ is finite, $\coprod_{i\in I} U_i \to U$ is surjective and
$\pi_i$ induces an isomorphism $\sO_{U,\pi_i(x)}\to \sO_{U_i,x}$ for all $ x\in U_i$.

We get the following relations between topologies on ${\bf Aff}^{\rm pro}_X $
\[
\text{(Zariski topology)} \subset \text{(transfinite topology)} \subset \text{(pro-Zariski topology)}.
\]

\begin{question}
Does there exist an analog of the pro-Zariski topology on $\Pro{\alpha}\C$ for a general
admissible site $\C$. This pro-topology should be  stronger than the transfinite topology. For example one might
try to define the requested pro-topology as generated by coverings  $\{ U_w \to
U\ | \ w\in W\}$ with $W$ finite and with $\coprod_{w\in W} U_w \to U$ a pro-covering
morphism which induces a surjection on topos points.
\end{question}

\subsection*{The category  ${\bf Aff}^{\rm oil}_X $  }

One problem of the pro-category ${\bf Aff}^{\rm pro}_X $ is that its definition is not
local on $X$. This is the reason why in \cite{BS} and \cite[Tag 0965]{StP} the weakly
\'etale morphisms and in the Zariski case the isomorphisms of local rings morphisms are
used. A similar technique, which is related to the pro-\'etale topology of rigid spaces as defined in \cite{Sch}, can be used in our case in order to replace ${\bf Aff}^{\rm
  pro}_X$ by a more local definition.

Consider the full subcategory of ${\bf Aff}_X$ consisting  of universally open morphisms 
$f:Y\to X$ which identify local rings, i.e.\  for any point $y\in Y$ the map
$f^*:\sO_{X,f(y)}\xrightarrow{\sim} \sO_{Y,y}$ is an isomorphism.

\begin{lem}\label{lem.prosur}
For a functor $F:I\to {\bf Aff}^{\rm oil}_X$ such that $I$ is a cofiltered
$\alpha$-category and such that all transition maps $F(i)\to F(j)$ are surjective the
limit $Y=\lim_{i\in I} F(i)$ taken in ${\bf Aff}_X$ is an object of ${\bf Aff}^{\rm oil}_X$.
\end{lem}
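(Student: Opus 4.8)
The plan is to verify separately the two defining properties of ${\bf Aff}^{\rm oil}_X$ for the limit $Y=\lim_{i\in I} F(i)$: that the structure morphism $f\colon Y\to X$ identifies local rings, and that it is universally open. Write $X=\Spec R$, $F(i)=\Spec A_i$ and $Y=\Spec A$, so that $A=\colim_{i\in I}A_i$ is a filtered colimit of $R$-algebras. After passing to a level representation (Section~\ref{sec.tow}) I may assume $I$ is a cofinite directed set; this does not change the limit and, since the transition maps of a level representation are compositions of the original ones, they remain surjective.

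For the identification of local rings, fix $y\in Y$ with image $x=f(y)\in X$ and images $y_i\in F(i)$. Since localization commutes with filtered colimits, $\sO_{Y,y}=\colim_{i\in I}\sO_{F(i),y_i}$, compatibly with the structure maps from $\sO_{X,x}$. Each map $\sO_{X,x}\to\sO_{F(i),y_i}$ is an isomorphism because $F(i)\in{\bf Aff}^{\rm oil}_X$, and because all the triangles over $X$ commute, every transition map $\sO_{F(j),y_j}\to\sO_{F(i),y_i}$ in this colimit system is an isomorphism as well. Hence the colimit is identified with $\sO_{X,x}$ and $f^*\colon\sO_{X,x}\to\sO_{Y,y}$ is an isomorphism. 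Note that this step does not use surjectivity of the transition maps.

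The main point is universal openness. Since openness can be checked locally on the base and base change commutes with the limit, i.e.\ $Y\times_X X'=\lim_{i\in I}(F(i)\times_X X')$, and since surjectivity of a morphism is stable under base change, I may reduce to $X'$ affine and it suffices to prove: for a cofiltered system of affine schemes over $X$ with surjective transition maps, each term $F(i)\to X$ being open, the limit map $f\colon Y\to X$ is open. The key ingredient is that each projection $p_i\colon Y\to F(i)$ is surjective. This follows from the standard fact that the underlying space of $Y$ is the cofiltered limit $\lim_i|F(i)|$ of spectral spaces along spectral maps, together with the observation that the fibre of $p_i$ over a point of $F(i)$ is a cofiltered limit of nonempty quasi-compact spaces (nonempty by surjectivity of the transition maps), hence nonempty. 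Granting this, every open $U\subseteq Y$ is a union of basic opens of the form $p_i^{-1}(V)$ with $V\subseteq F(i)$ open, and surjectivity of $p_i$ yields $f(p_i^{-1}(V))=f_i(V)$, which is open since $f_i\colon F(i)\to X$ is open. Taking unions shows $f(U)$ is open, so $f$ is open; applying the same argument after an arbitrary affine base change gives universal openness, and the two properties together place $Y$ in ${\bf Aff}^{\rm oil}_X$.

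The step I expect to be the main obstacle is controlling the topology of the cofiltered limit, specifically the surjectivity of the projections $p_i$. This is exactly where the hypothesis that the transition maps are surjective enters, and it rests on the compactness of spectral spaces under cofiltered limits rather than on any finiteness of the index system; the ring-theoretic identification of local rings, by contrast, is formal.
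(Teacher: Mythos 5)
Your proof is correct, and its core step is the same as the paper's: openness of $f\colon Y\to X$ is proved by writing every open of $Y$ as a union of preimages $p_i^{-1}(V)$ of opens at finite levels and showing the projections $p_i\colon Y\to F(i)$ are surjective, so that $f(p_i^{-1}(V))=f_i(V)$ is open. The differences are worth recording. First, the paper's proof is a brief sketch that only verifies openness; you also carry out the identification of local rings (correctly noting it is a formal filtered-colimit argument not using surjectivity) and the reduction of universal openness to affine base change, both of which the paper leaves implicit. Second, and more substantively, the paper justifies surjectivity of $p_i$ by asserting that the transition maps have \emph{finite} nonempty fibres, invoking that cofiltered limits of finite nonempty sets are nonempty. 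Your spectral-space argument is the more robust one: a transition map in ${\bf Aff}^{\rm oil}_X$ identifies local rings, so its fibres are spectra of absolutely flat rings with constant residue field, i.e.\ profinite sets, and these need not be finite (already $\Spec(\prod_{\mathbb N}\mathbb{F}_2)\to\Spec(\mathbb{F}_2)$ lies in ${\bf Aff}^{\rm oil}_X$ and has infinite fibre), so your compactness argument actually repairs a gap in the paper's justification. One caution: the bare statement ``a cofiltered limit of nonempty quasi-compact spaces is nonempty'' is false for general topological spaces; you do need the spectral structure you mention, or, simplest of all in this affine setting, the observation that the fibre of $p_i$ over $y$ is $\Spec$ of the filtered colimit $\colim_{j}\bigl(A_j\otimes_{A_i}\kappa(y)\bigr)$ of nonzero rings, which is nonzero because $1\neq 0$ persists in a filtered colimit of rings.
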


\begin{proof} We show that $Y\to X$ is open.
Any affine open subscheme $U\subset Y$ is the preimage of some affine open $U_i\subset
F(i)$ for some $i\in I$. Note that $Y\to F(i)$ is surjective, because the fibres of the
transition maps in the system $F$ are finite and nonempty and a cofiltered limits of
finite nonempty sets is nonempty. So the image of $U$ in $X$ is the same as the image of
$U_i$ and therefore is open.
\end{proof} 

If not mentioned otherwise we endow  ${\bf Aff}^{\rm oil}_X$ with the transfinite
topology, i.e.\ the weakest topology containing the Zariski coverings and such that a
$\lambda$-transfinite composition of  covering morphisms is a covering morphism ($\lambda\le\alpha$).
This topology has an explicit description similar to  Proposition~\ref{prop.sttop}. 

For $U$ in ${\bf Aff}^{\rm oil}_X$ the weakly
contractible object $\mathcal P^\infty (U)$ as defined in Section~\ref{sec.weak} exists in
${\bf Aff}^{\rm oil}_X$. So the site  ${\bf Aff}^{\rm oil}_X$ has  similar properties
as  ${\bf Aff}^{\rm pro}_X$ with the transfinite topology. In fact both are closely
related as we show now.

\begin{prop}
For any $Y\to X$ in  ${\bf Aff}^{\rm oil}_X$ there is a $\lambda$-transfinite composition ($\lambda\le\alpha$)
of surjective Zariski localizations $\tilde Y\to Y$ such that $\tilde Y\to X$ is in  ${\bf
  Aff}^{\rm pro}_X$.

In particular there is an equivalence of topoi \[\Sh({\bf Aff}^{\rm oil}_X)\cong \Sh(
{\bf Aff}^{\rm oil}_X\cap  {\bf Aff}^{\rm pro}_X  ), \]
where both sites have the transfinite topology.
\end{prop}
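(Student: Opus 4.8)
The plan is to prove the first assertion — the existence of the transfinite Zariski refinement $\tilde Y \to Y$ landing in ${\bf Aff}^{\rm pro}_X$ — and then to deduce the equivalence of topoi from the comparison lemma. For the deductive step: a $\lambda$-transfinite composition of surjective Zariski localizations is, by the explicit description of the transfinite topology (an analogue of Proposition~\ref{prop.sttop} for ${\bf Aff}^{\rm oil}_X$), a distinguished transfinite covering morphism, hence a covering morphism in the transfinite topology on ${\bf Aff}^{\rm oil}_X$. Thus the first assertion says precisely that every object $Y$ of ${\bf Aff}^{\rm oil}_X$ admits a covering morphism $\tilde Y \to Y$ with $\tilde Y$ in the full subcategory ${\bf Aff}^{\rm oil}_X \cap {\bf Aff}^{\rm pro}_X$. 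Since this subcategory is closed under the fibre products and finite coproducts needed to form distinguished coverings, the inclusion is a continuous, dense functor, and the comparison lemma \cite[App., Cor.~4.3]{MLM} — applied exactly as in Lemma~\ref{prop.dcpull}(ii) — yields the asserted equivalence $\Sh({\bf Aff}^{\rm oil}_X) \cong \Sh({\bf Aff}^{\rm oil}_X \cap {\bf Aff}^{\rm pro}_X)$.

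It remains to construct $\tilde Y \to Y$. First I would record the local structure of an oil morphism $f \colon Y \to X$: since $f$ is universally open and $f^* \colon \sO_{X,f(y)} \to \sO_{Y,y}$ is an isomorphism for every $y$, each fibre $f^{-1}(x)$ is a disjoint union of reduced points with residue field $\kappa(x)$, so $f$ is, Zariski-locally on $Y$, a cofiltered limit of finite disjoint unions of open immersions into $X$. The target $\tilde Y$ should be a cofiltered limit of finite Zariski localizations of $R$, so the strategy is to approximate this (possibly profinite) sheet structure by such localizations and to split the sheets apart by surjective Zariski localizations of $Y$. Concretely, I would consider the cofiltered family of pairs $(W,\phi)$ with $W \in {\bf Aff}^{\rm Zar}_X$ a finite Zariski localization of $R$ and $\phi \colon Y \to W$ a morphism over $X$, each such $\phi$ encoding a clopen decomposition of $Y$ refining the sheet structure. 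Enumerating a cofinal subfamily by ordinals $< \lambda$ with $\lambda \le \alpha$ — using that ${\bf Aff}^{\rm oil}_X$ is $\alpha$-small (Definition~\ref{def.small}), so that such a cofinal family has cardinality at most $\alpha$ — I would build a tower $(Y_i)_{i<\lambda}$ with $Y_0 = Y$, taking $Y_{i+1} \to Y_i$ to be the surjective Zariski localization obtained by pulling back along $Y_i \to X$ the Zariski covering of $X$ underlying $W_i$, and $Y_i = \lim_{j<i} Y_j$ at limit ordinals. By Lemma~\ref{lem.prosur} each $Y_i$, and the transfinite composite $\tilde Y = \lim_{i<\lambda} Y_i$, is again oil over $X$, while the construction arranges compatible maps $\tilde Y \to W_i$ exhibiting $\tilde Y$ as the cofiltered limit $\lim_i W_i$ of finite Zariski localizations of $R$; hence $\tilde Y \to X$ lies in ${\bf Aff}^{\rm pro}_X$, as required.

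The main obstacle is this last point: verifying that the engineered tower really makes $\tilde Y \to \lim_i W_i$ an isomorphism, that is, that the local, $X$-varying sheet data of the oil morphism is globalized into genuine finite Zariski localizations of the single ring $R$. This is the precise place where the non-locality of ${\bf Aff}^{\rm pro}_X$ noted before the proposition is overcome by passing to a transfinite Zariski cover, and it is the analogue in our setting of the structural results on weakly \'etale morphisms in \cite[Sec.\ 2]{BS}; one must check both that the decompositions $\phi$ separate enough sheets to recover $Y$ in the limit and that the cardinality of the cofinal family stays below $\alpha$, so that indeed $\lambda \le \alpha$.
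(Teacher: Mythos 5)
Your deduction of the topos equivalence from the first assertion (transfinite compositions of surjective Zariski localizations are covering morphisms in the transfinite topology, then the comparison lemma as in Lemma~\ref{prop.dcpull}(ii)) is fine, and it is exactly how the paper's ``in particular'' is meant to be read. The gap is in your construction of $\tilde Y$, and it sits precisely where you yourself flag ``the main obstacle'': nothing in your setup can deliver the isomorphism $\tilde Y \cong \lim_i W_i$. Two concrete problems. First, your preliminary structure claim is not correct as stated: the fibre of an oil morphism over $x$ is the spectrum of an absolutely flat $\kappa(x)$-algebra, i.e.\ a \emph{profinite} space of points, not a disjoint union of reduced points, and the assertion that $f$ is Zariski-locally a cofiltered limit of finite disjoint unions of open immersions into $X$ is essentially the proposition itself rather than a consequence of the definition. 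Second, and fatally, your tower only ever refines $Y$ by coverings pulled back from $X$: unwinding the construction, $\tilde Y = Y \times_X P$ for some pro-(Zariski localization) $P \to X$. Moreover you index by pairs $(W,\phi)$ in which a map $\phi\colon Y \to W$ over $X$ \emph{already exists}, so each pullback $Y\times_X W \to Y$ splits via the graph of $\phi$, and $\lim_{(W,\phi)} W$ is just the pro-Zariski ``hull'' that $Y$ maps to anyway; the whole difficulty of the proposition is that for an oil morphism the canonical map $Y \to \lim_{(W,\phi)} W$ need \emph{not} be an isomorphism. To prove that a refinement $\tilde Y$ is pro-Zariski one must take an arbitrary $s \in \sO(\tilde Y)$, match it germ by germ (via the local-ring isomorphisms) with sections of $\sO_X$ on some finite open cover of $\tilde Y$, and then refine that cover --- an arbitrary open cover of $\tilde Y$, not one pulled back from $X$ --- by a finite clopen decomposition. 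Base change along $Y \to X$ gives no control whatsoever over such covers.

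This is exactly where the paper's (sketched) proof takes a different route. It first replaces $Y$ by $\mathcal P^\infty(Y) \to Y$, the construction of Section~\ref{sec.weak}, which makes sense in ${\bf Aff}^{\rm oil}_X$ by Lemma~\ref{lem.prosur}: a transfinite composition of \emph{surjective} Zariski localizations built from \emph{all} Zariski coverings of the successive stages, not only those induced from $X$. The result is weakly contractible, so every finite Zariski covering of it splits; equivalently, every finite open cover admits a finite clopen refinement. Renaming this refinement $Y$, the paper then considers the directed set $\mathbf{J}$ of pairs consisting of a finite clopen decomposition $Y = \coprod_{i\in I} V_i$ together with affine opens $U_i \subset X$ satisfying $f(V_i)\subset U_i$, and proves $Y \xrightarrow{\sim} \lim_{\mathbf{J}} \coprod_{i} U_i$; both injectivity and surjectivity of the corresponding map on rings use exactly the clopen-refinement property that weak contractibility supplies. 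So the missing idea in your proposal is the passage to a weakly contractible refinement built from coverings of $Y$ itself; with base change from $X$ alone, the key isomorphism is out of reach.
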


\begin{proof}[Sketch of Proof]
Composing $Y\to X$ with the transfinite composition of surjective Zariski localizations
$\mathcal P^\infty (Y) \to Y$ we can assume without loss of generality that $Y$ is weakly
contractible.

Consider the following data: $Y=\coprod_{i\in I} V_i$ is a finite decomposition into open
and closed affine subschemes and $U_i\subset X$ is an open affine subscheme such that
$f(V_i)\subset U_i$.
The set of such data forms a directed set $\bf J$ under the ordering by refinement.
Then
\[
Y\xrightarrow{\sim} \lim_{\bf J} \coprod_{i\in I} U_i
\]
is an isomorphism.
\end{proof}

\section{Example: a left adjoint to $i^*$}\label{sec.lead}

\noindent
Let $i:Y\to X$ be a closed immersion of separated, quasi-compact schemes. Consider the
category of schemes $U$ together with an \'etale, affine morphism $U\to X$.  We write
$X_\te$ for this category endowed either with the Nisnevich  ($\te={\rm Nis}$) or \'etale
($\te=\et$) topology, similarly for $Y$. Clearly, $X_\te$ and $Y_\te$ are admissible.

 Let $\Lambda$ be a commutative ring. Consider the pullback functor
\eq{pullo}{
i^*: D_\Lambda(Y_\te ) \to D_\Lambda(X_\te)
}
on unbounded derived categories of complexes of $\Lambda$-modules.

Our aim in this section is to show that the analogous pullback functor in the transfinite Nisnevich and transfinite
\'etale topology  has a left adjoint. A similar result for the pro-\'etale topology
has been observed in~\cite[Rmk.\ 6.1.6]{BS}. Before discussing the transfinite case we
discuss why in the classical case the functor \eqref{pullo} has no left adjoint in general.

Recall that the derived categories in \eqref{pullo} have small products. For $(K_i)_{i\in I}$ a
family of complexes  $\Lambda$-modules in $X_\te$ the infinite product of these
complexes in $D_\Lambda(X_\te)$ is calculated by first replacing the $K_i$ by K-injective
complexes as in \cite{Sp}, see e.g. \cite{CD}, and then taking degreewise products of sheaves.
The following example shows that in general the functor \eqref{pullo} does not preserve
infinite products, in particular it cannot have a left adjoint.

\begin{ex}
For a prime $p$ write the henselization of $\Z_{(p)}$ as an filtered direct limit of
\'etale $\Z_{(p)}$-algebras  
\[
\Z_{(p)}^h = \colim_{j\in J} A_j.
\]
We consider the closed immersion 
\[
i:\Spec(\mathbb F_p ) \to \Spec (\Z_{(p)}).
\]
By $\Z[A_j]$ we denote the \'etale sheaf of free abelian groups on $X_\et=\Spec
(\Z_{(p)})_\et$ represented by $\Spec (A_j)$. Then the homotopy limit in the sense of
\cite[Sec.\ 1.6]{Ne} taken in the triangulated category $D_\Z (X_\et )$
\[
K=\operatorname{holim}_{j\in J} \Z[A_j]
\]
has vanishing \'etale cohomology sheaf in degree zero. However $i^* \Z[A_j]$ is the
constant sheaf $\Z$, so
\[
\operatorname{holim}_{j\in J} i^* \Z[A_j] = \Z
\]
is not quasi-isomorphic to $i^* K$.
\end{ex}

\smallskip

\begin{thm}\label{thm.pullbacki}
 Let $X$ be quasi-compact and separated.
There exists an infinite cardinal $\beta$ such that for $\alpha \ge \beta$ the functors
\begin{align}
\label{eq.pull1} i^*:  \Sh(\langle \alpha\rangle X_\te ) &\to  \Sh( \langle \alpha\rangle Y_\te)  \\
\label{eq.pull2} i^*: D_\Lambda ( \langle \alpha \rangle X_\te ) &\to D_\Lambda ( \langle \alpha \rangle
Y_\te ) 
\end{align}
have left adjoints.
\end{thm}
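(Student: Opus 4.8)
The plan is to prove that $i^*$ has a left adjoint by showing that it preserves all small limits and then invoking the adjoint functor theorem; the entire difficulty will be concentrated in the preservation of infinite products, which is exactly the property that fails classically (as the preceding example shows) and that the transfinite enlargement is designed to repair. The base change functor $\bar\imath\colon X_\te\to Y_\te$, $U\mapsto U\times_X Y$, is continuous and preserves finite limits, so by Theorem~\ref{thm1} it induces a morphism of topoi whose inverse image is the functor $i^*$ of \eqref{eq.pull1}; in particular $i^*$ is the left adjoint of $i_*$ and hence preserves all small colimits. Since $\Sh(\langle\alpha\rangle X_\te)$ and $\Sh(\langle\alpha\rangle Y_\te)$ are Grothendieck topoi they are locally presentable, and a limit-preserving functor between locally presentable categories admits a left adjoint as soon as it is accessible (here accessibility is free, as $i^*$ preserves colimits). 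As $i^*$ is left exact it preserves finite limits, and every small limit is built from products and equalizers; thus it remains only to prove that $i^*$ preserves small products.

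I would choose $\beta$ via Theorem~\ref{thm2} (for instance $\beta=\card(\Mor(X_\te))+\card(\Mor(Y_\te))$, enlarged to also accommodate the henselization data below) so large that for $\alpha\ge\beta$ both $\langle\alpha\rangle X_\te$ and $\langle\alpha\rangle Y_\te$ possess a generating set of coherent weakly contractible objects. Since such objects generate, a morphism of sheaves is an isomorphism as soon as it induces a bijection on sections over every weakly contractible object; hence it suffices to show, for each weakly contractible $P$ of $\langle\alpha\rangle Y_\te$ and each family $(G_s)_{s\in S}$ in $\Sh(\langle\alpha\rangle X_\te)$, that the natural map
\[
\Gamma\bigl(P,\, i^*\!\prod_{s} G_s\bigr)\longrightarrow \prod_s \Gamma(P, i^* G_s)
\]
is a bijection. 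The key will be to produce, for each such $P$, a weakly contractible object $\tilde P$ of $\langle\alpha\rangle X_\te$, henselian along $Y$, with $\tilde P\times_X Y\cong P$ together with a natural identification $\Gamma(P, i^*G)\cong\Gamma(\tilde P,G)$ for all $G$. Granting this, the displayed map becomes the canonical bijection $\Gamma(\tilde P,\prod_s G_s)\xrightarrow{\sim}\prod_s\Gamma(\tilde P,G_s)$, because $\Gamma(\tilde P,-)=\Mor(\tilde P,-)$ commutes with products.

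The construction of $\tilde P$ and the identification $\Gamma(P, i^*G)\cong\Gamma(\tilde P,G)$ are what I expect to be the main obstacle. I would introduce the henselization $X^{h}_Y=\lim_U U\in\Pro{\alpha}(X_\te)$ of $X$ along $Y$, the cofiltered limit over the affine étale neighborhoods $U$ of $Y$ in $X$; by the theory of henselian pairs the pair $(X^h_Y,Y)$ is henselian, $X^h_Y\times_X Y\cong Y$, and base change induces an equivalence between the étale (respectively Nisnevich) coverings of $Y$ and those of $X^h_Y$. Up to refinement a weakly contractible $P$ over $Y$ is a transfinite composition of coverings as in the $\mathcal P^\infty$ construction of Section~\ref{sec.weak}; lifting this tower through the above equivalence yields a tower over $X^h_Y$ whose limit $\tilde P\in\Pro{\alpha}(X_\te)$ is itself a transfinite composition of covering morphisms, hence weakly contractible for the transfinite topology by Corollary~\ref{prop.weaktrans}, and satisfies $\tilde P\times_X Y\cong P$. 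The same henselian equivalence shows that in the colimit formula defining the inverse image the object $\tilde P$ is cofinal in the relevant comma category, whence $\Gamma(P, i^*G)=\Gamma(\tilde P, G)$. This is the evident analogue of the argument sketched in \cite[Rmk.\ 6.1.6]{BS} for the pro-étale topology; the genuinely technical points are to keep all the lifts inside $\Pro{\alpha}$ and to verify the cofinality.

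Finally, for the derived statement \eqref{eq.pull2} I would argue as follows. For $\alpha\ge\beta$ the topos $\langle\alpha\rangle X_\te$ is transfinite by Corollary~\ref{cor.main}, so small products are exact in $\Mod_\Lambda(\langle\alpha\rangle X_\te)$, as in \cite[Prop.\ 3.1.9]{BS}; consequently products in $D_\Lambda(\langle\alpha\rangle X_\te)$ are computed degreewise. The functor $i^*$ on $\Lambda$-modules is exact and preserves products by the argument above, so its exact extension to $D_\Lambda$ preserves these degreewise products, i.e.\ preserves arbitrary products; being the exact pullback it also preserves colimits and is therefore accessible. Since $D_\Lambda(\langle\alpha\rangle X_\te)$ is the derived category of a Grothendieck abelian category it is presentable, so by the adjoint functor theorem for presentable stable $\infty$-categories \cite[Cor.\ 5.5.2.9]{Lu}, or equivalently by Neeman's Brown representability theorem for the dual, the limit-preserving functor $i^*$ of \eqref{eq.pull2} admits a left adjoint.
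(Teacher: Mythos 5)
Your global architecture coincides with the paper's: both reduce the theorem, via an adjoint functor theorem (local presentability and accessibility in your version; the special adjoint functor theorem plus compact generation and Krause's localization theory in the paper's), to showing that $i^*$ preserves small products, and both verify this on sections over a generating set of weakly contractible objects by producing, for each weakly contractible $P$ over $Y$, a weakly contractible henselian lift $\tilde P$ over $X$ with $\Gamma(P,i^*G)\cong\Gamma(\tilde P,G)$. The paper implements this lift as a genuine left adjoint $i_\flat$ at the level of sheaves (Lemma~\ref{cl.ladsh}): each affine \'etale $V\to Y$ is lifted to an affine \'etale $U'\to X$ and replaced by the henselization of $U'$ along $V$; this functor is left adjoint to base change, it is initial in the comma category computing the presheaf pullback, and weak contractibility is transported abstractly from $i^*i_\flat\simeq{\rm id}$.

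Your execution of this central step, however, has genuine gaps. (1) The assertion that for the henselian pair $(X^h_Y,Y)$ base change gives an equivalence between \'etale (or Nisnevich) coverings of $X^h_Y$ and those of $Y$ is false: Gabber's theorem gives such an equivalence for \emph{finite} \'etale covers only. For affine \'etale covers base change is not even conservative: the covers $\{\Spec\Z_p\to\Spec\Z_p\}$ and $\{\Spec\Z_p\sqcup\Spec\Q_p\to\Spec\Z_p\}$ of the henselian pair $(\Spec\Z_p,\Spec\F_p)$ both restrict to the identity cover of $\Spec\F_p$. What survives is precisely an adjunction: covers of $Y$ lift, and the lift becomes canonical only after henselizing along the closed fibre. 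Consequently, lifting your tower forces a henselization at every step, and the transition morphisms of the lifted tower are then compositions of coverings with pro-(\'etale neighborhoods), which are not covering morphisms (\'etale neighborhoods are not surjective); so $\tilde P$ is not a transfinite composition of covering morphisms. (2) Independently of this, ``transfinite composition of covering morphisms, hence weakly contractible by Corollary~\ref{prop.weaktrans}'' is a non sequitur: that corollary only transfers weak contractibility from the weak to the transfinite topology, and a transfinite composition of coverings is not weakly contractible in general --- this is exactly why $\mathcal P$ must be iterated infinitely often in Section~\ref{sec.weak}. The correct argument is the section-lifting property of henselian pairs (a cover of $\tilde P$ restricts to a cover of $P$, splits there, and the splitting extends because the pair is henselian), or, as in the paper, the abstract consequence of $i^*i_\flat\simeq{\rm id}$. (3) Your $X^h_Y$ is a limit over affine \'etale neighborhoods of $Y$ in $X$; if $Y$ is not affine, no such neighborhoods exist at all (an affine \'etale neighborhood exhibits $Y$ as a closed subscheme of an affine scheme), so in the stated generality your construction is vacuous. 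The paper first reduces, without loss of generality, to $X$ affine; your proof needs this reduction as well but never makes it. Finally, a smaller point: your identification $\Gamma(P,i^*G)\cong\Gamma(\tilde P,G)$ concerns the sheaf pullback while cofinality computes only the presheaf pullback, so one must also check that sheafification does not change sections over weakly contractible objects; the paper handles this via the dc-topology on weakly contractible objects, Lemma~\ref{prop.dcpull}.
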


\begin{proof}
Choose $\beta$ such that for any open affine subscheme $\Spec(A) \to X$ we have $\beta\ge
\card(A)$. Then according to Theorem~\ref{thm2} and Remark~\ref{rmk.weakco} there exists a generating
set of coherent, weakly
contractible objects in the  topoi  $\Sh(\langle \alpha \rangle X_\te)$  and $\Sh(\langle \alpha
\rangle Y_\te )$.

\begin{lem}\label{lem.cogen}
Any coherent topos $\E$ as a small cogenerating set.
\end{lem}

\begin{proof}
By Deligne's theorem~\cite[Sec.\ IX.11]{SGA4} any coherent topos $\E$ has a set of points 
\[
(p_{j,*},p_j^*):{\bf Set} \to \E \quad\quad (j\in J)
\]
such that all $p_j$ together induce a faithful functor \[(p_j^*)_{j\in J}:\E \to \prod_{j\in
  J} {\bf Set} . \]
The set of objects $p_{j,*}( \{1,2\})$ ($j\in J$) is cogenerating.
\end{proof}

\begin{lem}\label{lem.comgen}
The triangulated categories $D_\Lambda(\langle \alpha\rangle X_\te)$ and $D_\Lambda(\langle
\alpha\rangle Y_\te)$ are compactly generated. 
\end{lem}

\begin{proof}
For simplicity of notation we restrict to $D_\Lambda(\langle \alpha\rangle X_\te)$.
For $U$  a coherent, weakly contractible object in $\Sh(\langle \alpha\rangle X_\te)$ the sheaf of free $\Lambda$-modules
$\Lambda[U]$ represented by $U$ is a compact object of the triangulated category
$D_\Lambda ( \langle \alpha \rangle X_\te )$. In fact the global section functor
$\Gamma(U,-)$ preserves exact complexes by Lemma~\ref{lem.exprop}. Furthermore, taking sections over a
coherent object preserves direct sums of $\Lambda$-modules \cite[Thm.\ VI.1.23]{SGA4}.

Let $\mathcal W$ be a set of such coherent, weakly contractible objects $U$ 
  which generate the topos
$\Sh(\langle \alpha\rangle X_\te)$. Then the set of compact objects $\{\Lambda[U]\ |\ U\in \mathcal W\}$ generates the
triangulated category $D_\Lambda ( \langle \alpha \rangle X_\te )$.
\end{proof}

By the special adjoint functor theorem \cite[Sec.\ V.8]{ML} the existence of a left adjoint 
to \eqref{eq.pull1} follows once we show that the functor \eqref{eq.pull1} preserves small products. Indeed,
coherent topoi satisfy the conditions of the special adjoint functor theorem by
Lemma~\ref{lem.cogen} and general properties of topoi.

 By \cite[Prop.\ 5.3.1]{Kr} and by Lemma~\ref{lem.comgen}
the existence of the left adjoint to \eqref{eq.pull2} follows if we show that \eqref{eq.pull2}
preserves small products.

\smallskip

In order to prove that our two functors $i^*$ preserve small products we can assume without loss of generality that
$X$ is affine. In this case the fact that \eqref{eq.pull1} preserves products is immediate
from Lemma~\ref{cl.ladsh}. The argument for the functor \eqref{eq.pull2} is given after the proof of
Lemma~\ref{cl.ladsh}.

\begin{lem}\label{cl.ladsh}
For $\alpha$ as above and for $X$ affine the functor
\[
i^*:\Sh (\langle\alpha\rangle X_\te ) \to \Sh (\langle\alpha\rangle Y_\te)
\]
has a left adjoint, denoted $i_\flat$, which satisfies
\begin{itemize}
\item[(i)] $ i^*\circ i_\flat\simeq {\rm id}$,
\item[(ii)] $i_\flat$ maps weakly contractible objects to weakly contractible objects. 
\end{itemize}
\end{lem}

\begin{proof}[Proof of Lemma~\ref{cl.ladsh}]
For $V\to
Y$ affine \'etale there exists an affine \'etale scheme $U'\to X$ such that $U'\times_X
Y\cong V$, see \cite[Tag 04D1]{StP}. Let $U$ be the
henselization of $U'$ along $V$, see \cite[Ch.\ XI]{Ra}. The resulting affine scheme $U$ together with the isomorphism
$U\times_X Y\cong V$ is unique up to unique isomorphism and depends functorially on $V$.

Taking inverse limits defines a fully faithful functor from $\langle\alpha\rangle X_\te$ to the
category of affine schemes over $X$. And the scheme $U$ constructed above lies in the
essential image of this functor. Without loss of generality we will identify $U$ with an object of
$\langle\alpha\rangle X_\te$. 

So the map $V\mapsto U$ extends to
a functor $i_\flat^{\rm pre}:Y_\te \to \langle\alpha\rangle X_\te$ which we can extend by continuity to a
functor
\eq{eq.iflatpre}{
i_\flat^{\rm pre}:\langle\alpha\rangle Y_\te \to \langle\alpha\rangle X_\te.
}
which is left adjoint to the pullback functor $ U \mapsto U \times_X Y $.

By \cite[Prop.\ I.5.1]{SGA4} the pullback along $i$ in the sense of presheaves  maps a presheaf $F$ on $\langle\alpha\rangle X_\te$ to the presheaf
\eq{}{
V\mapsto \colim_{(U,f_U)} F(U),
}
on $\langle\alpha\rangle Y_\te$,
where $(U,f_U)$ runs through the comma category of all pairs in which $U$ is in $\langle\alpha\rangle X_\te$ and $f_U$
is a map $V\to U\times_X Y$ in $\langle\alpha\rangle Y_\te$. But clearly for given $V$ the object
$i_\flat^{\rm pre} V$ in $\langle\alpha\rangle X_\te$ together with the isomorphism $i_\flat^{\rm
  pre} V\times_X Y\cong  V$ is an initial element in the comma category of these pairs. So
the presheaf pullback of $F$ is given by
\eq{eq.lefaduni}{
V\mapsto F( i_\flat^{\rm  pre} V ).
}

Let $\tilde Y_\te$ be the full subcategory of $\langle\alpha\rangle Y_\te$ given by the weakly
contractible objects. Note that according to the first part of Section~\ref{sec.weak}, the objects
of $\tilde Y_\te$ generate $\langle \alpha \rangle Y_\te$.  The restriction of the functor~\eqref{eq.iflatpre} to $\tilde
Y_\te$ with the dc-topology, see
Definition~\ref{def.dctop}, is continuous in the sense of \cite[Def.\ III.1.1]{SGA4}. To see
this, note that decompositions into disjoint unions of $V\in \tilde Y_\te $ can be lifted to $i^{\rm
  pre}_\flat V$ by \cite[Prop.\ XI.2.1]{Ra}. Now \cite[Prop.\ III.1.2 ]{SGA4} tells that that we
get an adjoint pair of functors
\[
i_\flat^{\rm res}:\Sh (\tilde Y_\te) \rightleftharpoons \Sh (\langle\alpha\rangle X_\te ): i^*_{\rm res} .
\]
By Lemma~\ref{prop.dcpull} and by what is explained above equation
\eqref{eq.lefaduni}, the right adjoint $i^*_{\rm res}$ is just
the composition of the sheaf pullback 
\eq{eq.shpullb}{
i^*:\Sh( \langle\alpha\rangle X_\te)\to\Sh( \langle\alpha\rangle Y_\te)
}
composed with the restriction to $\tilde Y_\te$. However the latter restriction
\[
\Sh(\langle\alpha\rangle Y_\te)\xrightarrow{\sim} \Sh( \tilde Y_\te )
\]
 is an equivalence of categories according to Lemma~\ref{prop.dcpull}, so we obtain
 a left adjoint $i_\flat$ to \eqref{eq.shpullb} as stated in Lemma~\ref{cl.ladsh}.

 Property (i) of the Lemma is immediate from the construction of $i_\flat$ and (ii)
 follows abstractly from the adjointness property.
\end{proof}

Consider a family of complexes of sheaves of $\Lambda$-modules $(K_j)_{j\in J}$ on
$\langle\alpha\rangle X_\te$. Note that, because there are enough weakly contractible objects in
$\Sh(\langle \alpha \rangle X_\te)$, small products of complexes of sheaves of
$\Lambda$-modules on $\langle \alpha \rangle X_\te$ preserve quasi-isomorphisms 
by Lemma~\ref{lem.exprop}.  So we have to show that
\[
i^* (\prod_{j\in J} K_j)\to \prod_{j\in J} i^* K_j 
\]
is a quasi-isomorphism, where the product is just the degreewise product of sheaves. Using
 compact generators $\Lambda[U]$  of $D_\Lambda ( \langle
\alpha \rangle Y_\te ) $, see Lemma~\ref{lem.comgen}, it suffices to show that 
\eq{}{
\Gamma(U, i^* (\prod_{j\in J} K_j)) \to\prod_{j\in J} \Gamma(U, i^* K_j )
}
is a quasi-isomorphism of complexes of $\Lambda$-modules. By
the adjunction of Lemma~\ref{cl.ladsh} this is equivalent to showing that 
\[
\Gamma(i_\flat U, \prod_{j\in J} K_j) \to \prod_{j\in J} \Gamma(i_\flat U,   K_j)
\]
is a quasi-isomorphism, which is obvious. 
\end{proof}

\bibliographystyle{plain}

\renewcommand\refname{References}

\end{document}